\def\english{\selectlanguage{english}}
\providecommand\mathbb{\bf}
\newcommand\R{{\mathbb R}}
\newtheorem{thm}{Theorem}[section]
\newtheorem{lemma}{Lemma}[section]
\newtheorem{pro}{Proposition}[section]
\newtheorem{coro}{Corollary}[section]
\newtheorem{remark}{Remark}[section]
\newcounter{Remark}
\renewcommand\theRemark{\arabic{Remark}}
\newcounter{steps}
\newenvironment{proof}[1][]{%
\par\medbreak\setcounter{steps}{0}
{\noindent\bfseries Proof#1. }} {\hfill\fbox{\ }\medbreak}
\newcounter{substeps}[steps]
\newcommand{\intxv}[1]{
\int _{\R^2}\!\int _{\R ^2} \!\!#1 \;\mathrm{d}v\mathrm{d}x}
\newcommand{\intxvbis}[1]{
\int _{\R^4}\!\!#1 \;\mathrm{d}v\mathrm{d}x}
\newcommand{\intxvp}[1]{
\int _{\R^2}\!\int _{\R ^2} \!\!#1 \;\mathrm{d}\vp\mathrm{d}x}
\newcommand{\inttxv}[1]{
\int _{\R_+} \!\int _{\R^2}\!\int _{\R ^2} \!\!#1 \;\mathrm{d}v\mathrm{d}x\mathrm{d}t}
\newcommand{\intx}[1]{
\int _{\R ^2} \!#1 \;\mathrm{d}x}
\newcommand{\intv}[1]{
\int _{\R ^2} \!#1 \;\mathrm{d}v}
\newcommand{\intvp}[1]{
\int _{\R ^2} \!#1 \;\mathrm{d}v^{\prime}}
\newcommand{\vp}[0]{
v ^{\prime}}
\newcommand{\rp}[0]{
r ^{\prime}}
\newcommand{\eps}[0]{
\varepsilon}
\newcommand{\epsk}[0]{
{\varepsilon _k}}
\newcommand{\supe}[0]{
\sup _{\varepsilon >0}}
\newcommand{\fe}[0]{
f ^\varepsilon}
\newcommand{\re}[0]{
r ^\varepsilon}
\newcommand{\tre}[0]{
\tilde{r} ^\varepsilon}
\newcommand{\tfe}[0]{
\tilde{f} ^\varepsilon}
\newcommand{\fek}[0]{
f ^{\varepsilon _k}}
\newcommand{\limk}[0]{
\lim _{k \to +\infty}}
\newcommand{\fin}[0]{
f ^{\mathrm{in}}}
\newcommand{\Be}[0]{
B ^\varepsilon}
\newcommand{\Divx}[0]{
\mathrm{div}_x}
\newcommand{\Divv}[0]{
\mathrm{div}_v}
\newcommand{\Divy}[0]{
\mathrm{div}_y}
\newcommand{\Divxv}[0]{
\mathrm{div}_{x,v}}
\newcommand{\zm}[0]{
\{0,...,m-1\}}
\newcommand{\om}[0]{
\{1,...,m-1\}}
\newcommand{\ran}[0]{
\mathrm{Range\;}}
\newcommand{\bB}[0]{
{\bold B}}
\newcommand{\ltxv}[0]{
L^2(\R ^2 \times \R ^2)}
\newcommand{\ltm}[0]{
{L^2 _M}}
\newcommand{\ltF}[0]{
{L^2 _F}}
\newcommand{\liltF}[0]{
{L^\infty (\R_+; L^2 _F)}}
\newcommand{\ltmvx}[0]{
{L^2\left ( \frac{\mathrm{d}v\mathrm{d}x}{M(v)} \right )}}
\newcommand{\ltFvx}[0]{
{L^2\left ( \frac{\mathrm{d}v\mathrm{d}x}{F(x,v)} \right )}}
\newcommand{\oc}[0]{
\omega _c (x)}
\newcommand{\tc}[0]{
T_c (x)}
\newcommand{\ave}[1]{
\left \langle #1 \right \rangle }
\newcommand{\lty}[0]{
L^2( \R ^m)}
\newcommand{\inty}[1]{
\int _{\R ^m} \!\!\!\!#1 \;\mathrm{d}y}
\newcommand{\bin}[0]{
b^i \cdot \nabla _y}
\newcommand{\bzn}[0]{
b^0 \cdot \nabla _y}
\newcommand{\nxv}[0]{
\nabla _{x,v}}
\newcommand{\nxvp}[0]{
\nabla _{x,\vp}}
\newcommand{\nx}[0]{
\nabla _x}
\newcommand{\nv}[0]{
\nabla _v}
\newcommand{\nvp}[0]{
\nabla _{\vp}}
\newcommand{\tB}[0]{
\tilde{B}}
\newcommand{\tcalB}[0]{
\tilde{{\cal B}}}
\newcommand{\tQo}[0]{
\tilde{Q}^1}
\newcommand{\sumio}[0]{
\sum _{i =1 } ^{m-1}}
\newcommand{\sumiz}[0]{
\sum _{i =0 } ^{m-1}}
\newcommand{\sumjo}[0]{
\sum _{j =1 } ^{m-1}}
\newcommand{\sumjz}[0]{
\sum _{j =0 } ^{m-1}}
\newcommand{\an}[0]{
a \cdot \nabla _y }
\newcommand{\bjn}[0]{
b^j \cdot \nabla _y }
\newcommand{\D}[0]{
\mathrm{D}}
\newcommand{\dny}[0]{
\cdot \nabla _y}
\begin{document}
\english

\title{Impact of strong magnetic fields on collision mechanism for transport of charged particles}

\author{Mihai Bostan
\thanks{Laboratoire d'Analyse, Topologie, Probabilit\'es LATP, Centre de Math\'ematiques et Informatique CMI, UMR CNRS 7353, 39 rue Fr\'ed\'eric Joliot Curie, 13453 Marseille  Cedex 13
France. E-mail : {\tt bostan@cmi.univ-mrs.fr}}, Irene M.
Gamba\thanks{ICES \& Department of Mathematics, University of
Texas at Austin, Texas 78712 USA. E-mail: {\tt
gamba@math.utexas.edu}}
}

\date{ (\today)}

\maketitle

\begin{abstract}
One of the main applications in plasma physics concerns the energy production through thermo-nuclear fusion. The controlled fusion is achieved by magnetic confinement {\it i.e.,} the plasma is confined into a toroidal domain (tokamak) under the action of huge magnetic fields. Several models exist for describing the evolution of strongly magnetized plasmas, most of them by neglecting the collisions between particles. The subject matter of this paper is to investigate the effect of large magnetic fields with respect to a collision mechanism. We consider here linear collision Boltzmann operators and derive, by averaging with respect to the fast cyclotronic motion due to strong magnetic forces, their effective collision kernels.  
\end{abstract}

\paragraph{Keywords:}
Linear Boltzmann equation, Multiple scales, Average operator.

\paragraph{AMS classification:} 35Q75, 78A35, 82D10.

\section{Introduction}
\label{Intro}
\indent

The goal of this paper is to emphasize the role of the collision mechanism in the framework of magnetic confinement. We investigate both first and second order asymptotic approximations, when the magnetic field becomes large. We present completely explicit models and clearly indicate the form of the effective collision operator, the expression of the collision drift and check that the standard physical properties hold true.

Motivated by the magnetic fusion, many research programs in plasma physics 
concern the dynamics of charged particles under the action of strong magnetic fields $|\bB ^\eps| \to +\infty$ as $\eps \searrow 0$. Usually we appeal to kinetic models meaning that we replace the particles by their density (distribution) function $\fe = \fe (t,x,v)$ over the phase space, depending on time $t \in \R_+$, position $x\in \R^3$ and velocity $v \in \R^3$. The quantity $\fe (t,x,v) \;\mathrm{d}x\mathrm{d}v$ represents the particle number at time $t$, inside the infinitesimal volume $\mathrm{d}x\mathrm{d}v$ around $(x,v)$. The fluctuations of the density $\fe$ are due to the transport of the charged particles under the electro-magnetic field and also to the collisions between particles
\begin{equation}
\label{Equ1} \partial _t \fe + v \cdot \nabla _x \fe + \frac{q}{m} \left ( E(x) + v \wedge \bB ^\eps (x) \right ) \cdot \nabla _v \fe = Q(\fe),\;\;(t,x,v) \in \R_+ \times \R ^3 \times \R ^3.
\end{equation}
We add the initial condition
\begin{equation}
\label{Equ2}
\fe (0,x,v) = \fin (x,v),\;\;(x,v) \in \R ^3 \times \R ^3.
\end{equation}
Here $m$ is the particle mass and $q$ is the particle charge, $(E, \bB ^\eps)$ stands for the electro-magnetic field and $Q$ is the collision operator. We neglect the self-consistent electro-magnetic field; we assume that the magnetic field is stationary, divergence free and that the electric field derives from a given electric potential $E(x) = - \nabla _x \phi (x)$. We focus on the linear Boltzmann collision operator
\begin{equation}
\label{EquDomCol}
Q(f) (v) = \frac{1}{\tau} \int_{\R^3} s(v,\vp) \{ M(v) f(\vp) - M(\vp) f(v) \} \;\mathrm{d}\vp
\end{equation}
where $M$ is the Maxwellian equilibrium
\[
M(v) = \frac{1}{(2\pi \theta/m)^{3/2}} \exp \left (- \frac{m|v|^2}{2\theta}    \right )
\]
with temperature $\theta >0$, $\tau$ is the relaxation time and $s$ is the scattering cross section. We assume that
\begin{equation}
\label{Equ3} s(v,\vp) = s(\vp, v),\;\;v, \vp \in \R^3
\end{equation}
and
\begin{equation}
\label{Equ4} s_0 \leq s(v,\vp) \leq S_0,\;\;v, \vp \in \R^3
\end{equation}
for some $s_0, S_0 >0$. We analyze the asymptotic behaviour of (\ref{Equ1}) when the magnetic field becomes large
\begin{equation}
\label{Equ5}
\bB ^\eps  (x) = \frac{\bB (x)}{\eps},\;\;\bB (x)= B(x)b(x),\;\;\Divx (Bb) = 0,\;\;0 < \eps <<1
\end{equation}
for some scalar positive function $B(x)$ and some field of unitary vectors $b(x)$. 

These studies are motivated by the numerical simulations of strongly magnetized plasmas. Indeed, a direct resolution of \eqref{Equ1} using explicit numerical schemes requires a time discretization of order ${\cal O} (1/|\bB ^\eps |) = {\cal O}(\eps)$, and therefore a huge computation time, when $\eps$ becomes very small. The main idea will be to perform the asymptotic analysis of \eqref{Equ1} at the theoretical level {\it i.e.,} to replace \eqref{Equ1} by first/second order approximation models whose coefficients are stable when $\eps \searrow 0$. Eventually we solve numerically these asymptotic models. Notice that in this case the largest time step satisfying the stability condition does not depend on $\eps$, and therefore the numerical resolution remains efficient even for very large magnetic fields.

For simplicity we neglect the magnetic curvature {\it i.e.,} $\partial _x b = 0$. Assuming that $b = e_3$ and taking into account the divergence constraint in \eqref{Equ5} lead to $\bB ^\eps (x) = (0,0,\Be (x_1,x_2)) = (0, 0, B(x_1, x_2)/\eps)$. Therefore the equation \eqref{Equ1} should be understood in two dimensional setting
\begin{equation}
\label{Equ6} \partial _t \fe + v \cdot \nabla _x \fe + \frac{q}{m} \left ( E (x) + \frac{^\perp v}{\eps}  B(x) \right ) \cdot \nabla _v \fe = Q(\fe),\;\;t>0, (x,v) \in \R^2 \times \R^2
\end{equation}
where $^\perp v = (v_2, - v_1)$ and $Q$ is the two dimensional linear Boltzmann operator, associated to the Maxwellian $M(v) = m/(2\pi \theta)\; \exp ( - \frac{m|v|^2}{2\theta}), v \in \R^2$.

The dynamics of the particles is dominated by the transport operator  $\frac{1}{\eps} {\cal T}$ where  
\begin{equation}\label{TransT}
 {\cal T} = \frac{q B(x)}{m} {^\perp v } \cdot \nabla _v \, .
\end{equation}
When $\eps$ is small enough it is natural to introduce expansions like $\fe = f + \eps f^1 + \eps ^2 f ^2 + ...$ Plugging the previous Ansatz in \eqref{Equ6} gives at the lowest order the divergence constraint ${\cal T} f = 0$ and to the next order the evolution equation
\begin{equation}
\label{Equ7} \partial _t f + v \cdot \nabla _x f + \frac{q}{m} E (x) \cdot \nabla _v f + {\cal T} f ^1 = Q(f).
\end{equation}
We expect that the Boltzmann equation \eqref{Equ7} supplemented by the constraint ${\cal T} f = 0$ (and also by an appropriate initial condition) provides a well posed problem for the leading order term $f$. 

The approach is standard as in the Hilbert expansion methods of matching orders of $\eps$: the first order term $f^1$ enters \eqref{Equ7} as a Lagrange multiplier in such a way that at any time $t>0$ the constraint ${\cal T} f(t) = 0$ should be satisfied. Finding a closure for the dominant term $f$ reduces to eliminating the Lagrange multiplier provided that the constraint on $f$ holds true. Notice that the range and kernel of the operator ${\cal T}$ are orthogonal (in $L^2$) and therefore the closure for $f$ follows naturally by taking the orthogonal projection on $\ker {\cal T}$ of the Boltzmann equation \eqref{Equ7} at any fixed time $t>0$. One of the key point is to observe that taking the orthogonal projection on the kernel of ${\cal T}$ reduces to averaging along the flow associated to ${\cal T}$ cf. \cite{BosAsyAna}, \cite{BosTraSin}, \cite{BosGuidCent3D}. Observe that the motion associated to the vector field $(0,0,\oc   ^\perp v)$ is periodic
\[
X(s;x,v) = x,\;\;V(s;x,v) = R(-\oc  s)v
\]
where $\oc  = qB(x)/m$ is the rescaled cyclotronic frequency and for any $\alpha \in \R$ the notation $R(\alpha)$ stands for the rotation of angle $\alpha$
\begin{equation}\nonumber
R(\alpha) = \left (
\begin{array}{lll}
\cos \alpha  &  -\sin \alpha\\
\sin \alpha  &\;\;\;   \cos \alpha
\end{array}
\right ).
\end{equation}
We introduce the rescaled cyclotronic period $T_c (x) = 2\pi /\oc$. 
In this case, for any $L^2$ function $u$, the average simply writes
\begin{eqnarray}
\ave{u}(x,v) & = & \frac{1}{\tc} \int _0 ^ {\tc} u (X(s;x,v), V(s;x,v))\;\mathrm{d}s \nonumber \\
& = & \frac{1}{2\pi } \int _0 ^{2\pi} u (x, R(-\oc s)v)\;\mathrm{d}s.\nonumber
\end{eqnarray}
Applying the average operator to the Boltzmann equation \eqref{Equ7} yields the following closure for the dominant term $f$
\begin{equation}
\label{Equ8}
\ave{\partial _t f} + \ave{ v \cdot \nabla _x f + \frac{q}{m} E(x) \cdot \nabla _v f} = \ave{Q(f)}
\end{equation}
supplemented by the constraint ${\cal T} f(t) = 0$ at any time $t >0$. Notice that $\{\psi _1 = x_1, \psi _2 = x_2, \psi _3 = |v|\}$ form a complete family of prime integrals for ${\cal T}$ and thus the constraint ${\cal T}f = 0$ reads $f(t,x,v) = g(t, x, r = |v|)$ for some function $g = g(t,x,r)$. Straightforward computations show that 
\[
\ave{\partial _t f} = \partial _t \ave{f} = \partial _t f,\;\;\ave{v \cdot \nabla _x f} = \ave{v \cdot \nabla _x g (t,x,|v|)} = 0,\;\;\ave{E \cdot \nabla _v f} = \ave{E \cdot \frac{v}{|v|} \partial _r g } = 0.
\]
Moreover, we will see that the linear collision operator commutes with the average operator {\it i.e.,} $\ave{Q(f)} = Q(\ave{f}) = Q(f)$ and finally the dominant term satisfies the space homogeneous Boltzmann equation \cite{Cer88}, \cite{CerIllPul94}, \cite{MarRinSch90}
\[
\partial _t f = Q(f).
\]
In particular there is no current $\intv{vf(t,x,v)} = \intv{v g(t,x,|v|)} = 0$ along the orthogonal directions to the magnetic lines, as expected by the magnetic confinement context. But orthogonal drifts may appear at the next order. It is well known that the particles move slowly along the orthogonal directions and we distinguish the electric cross field drift $v _\wedge ^\eps = \frac{^\perp E}{\Be}$, the magnetic gradient drift $v_{\mathrm{GD}} ^\eps = -\frac{|v|^2}{2\omega _c ^\eps } \frac{^\perp \nabla _x \Be}{\Be}, \omega _c ^\eps = \frac{\omega _c}{\eps}$, the magnetic curvature drift (which is neglected here since $\partial _x b = 0$) cf. \cite{Rax}. We intend to investigate the effect of strong magnetic fields through the collision mechanism. We will also prove that the collisions account for a drift motion which, to the best of our knowledge, has not been reported yet. In the case of a constant scattering cross section $s$ we find the formula
\[
v _{\mathrm{QD}} ^\eps = - \frac{s}{\tau} \frac{^\perp v}{\omega _c ^\eps}
\]
(see \eqref{EquDriftNonConstantS} for non constant scattering cross section). Naturally, in order to observe the previous drifts (of order of $\eps$) we need to take into account the first order correction term in the expansion $\fe = f + \eps f^1 + \eps ^2 f^2 + ...$ eventually to appeal to the next order evolution equation
\begin{equation}
\label{Equ9}
\partial _t f^1 + v \cdot \nabla _x f^1 + \frac{q}{m} E \cdot \nabla _v f^1 + {\cal T} f^2 = Q(f^1).
\end{equation}
As before we are looking for a closure with respect to $f + \eps f^1$, by getting rid of the second order correction $f^2$. This will be achieved using again the averaging technique and combining properly the time evolutions of $f, f^1$ in \eqref{Equ7}, \eqref{Equ9}. Doing that we will get another Boltzmann like equation, whose collision operator $Q + \eps Q^1$ is a first order perturbation of the standard linear Boltzmann operator $Q$. The interesting point is that the new collision operator is non local in space anymore. For example, when the scattering cross section is constant we obtain
\[
\eps Q^1 (f) (x,v) = \frac{s}{\tau} M(v) \left \{m \;\frac{v \cdot v_\wedge ^\eps }{\theta} \;\rho _f (x) - \frac{v}{\omega _c ^\eps} \cdot  \;^\perp \nabla _x \rho _f + \Divx \left (  \frac{^\perp j_f}{\omega _c ^\eps } \right )   \right \}
\]
where $\rho _f (x) = \intvp{f(x,\vp)}, j _f (x)  = \intvp{\vp f(x,\vp)}$.
Surely the construction of the new collision operator should take into account the classical physical properties, as mass conservation and entropy inequality . We will see that $\eps Q^1$ (and therefore $Q + \eps Q^1$ as well) satisfies  global mass conservation and entropy inequality.  

We point out that a linearized and gyroaveraged collision operator has been written in \cite{XuRos91}, but the implementation of this operator seems very hard. We refer to \cite{Bri04} for a general guiding-center bilinear Fokker-Planck collision operator. Another difficulty lies in the relaxation of the distribution function towards a local Maxwellian equilibrium. Most of the available model operators, in particular those which are linearized near a Maxwellian, are missing this property. Very recently a set of model collision operators has been obtained in \cite{GarDifSarGra09}, based on entropy variational principles. 

The analysis of the Vlasov or Vlasov-Poisson equations with large external magnetic field have been carried out in \cite{FreSon98}, \cite{GolSai99}. The numerical approximation of the gyrokinetic models has been performed in \cite{GraBruBer06} using
semi-Lagrangian schemes, see also \cite{Gar97}, \cite{Gar98}. 
For recent results on the Boltzmann equation we refer to \cite{BosGamGou10}, \cite{BolVlaPoi1D}. High electric field limits of the Boltzmann-Poisson system have been addressed in \cite{Pou92}, \cite{CerGamLev97}, \cite{CerGamLev01}, \cite{BenGamKla04}.

The nonlinear  gyrokinetic theory of the Vlasov-Maxwell equations can be carried out by appealing to Lagrangian and Hamiltonian methods \cite{BriHam07}, \cite{Little79}, \cite{Little81}. It is also possible to follow the general method of multiple time scale or averaging perturbation developped in \cite{BogMit61}. For a unified treatment of the main physical ideas and theoretical methods that have emerged on magnetic plasma confinement we refer to \cite{HazMei03}.

The outline of the paper is the following. In Section \ref{MainResults} we introduce our models and present the main results. Section \ref{AveTec} is devoted to the definition and main properties of the average operator. In particular we investigate its commutation properties with respect to first order differential operators and the collision operator. The first and second order asymptotic models for the linear Boltzmann equation with large magnetic field are detailed in Section \ref{LimBolEqu}.

\section{Presentation of the models and main results}
\label{MainResults}
\indent

As observed formally before, the first order approximation of \eqref{Equ6} leads to the space homogeneous Boltzmann equation. We obtain rigorous weak and strong convergence results, at least for electric potential satisfying
\begin{equation}
\label{Equ49} E(x) = - \nabla _x \phi (x),\;\;\intx{\exp \left ( - q\frac{\phi (x)}{\theta} \right ) } <+\infty.
\end{equation}
The equilibrium $F(x,v) = M(v) \exp \left ( - q\phi (x)/\theta \right ) $ belongs to $\ltF = \ltFvx$ and by standard computations we obtain the uniform bound
\[
\intxv{\frac{(\fe (t,x,v))^2}{F(x,v)}} \leq \intxv{\frac{(\fin (x,v))^2}{F(x,v)}},\;\;\eps >0
\]
which allows us to get weak compactness in $\liltF$. For scattering cross section of the form $s(v,\vp) = \sigma ( |v - \vp|)$ one has
\begin{thm}
\label{WeakConv} Assume that the scattering cross section has the form $s(v,\vp) = \sigma ( |v - \vp|) \in [s_0, S_0], v, \vp \in \R^2$ and that the electrostatic potential verifies \eqref{Equ49}. For any $\eps >0$ we denote by $\fe$ the weak solution of \eqref{Equ6}, \eqref{Equ2} associated to a initial condition satisfying $\fin \in \ltF$. Therefore $(\fe)_{\eps >0}$ converges weakly $\star$ in $\liltF$ as $\eps \searrow 0$ towards the solution of the problem
\begin{equation}
\label{Equ55} \partial _t f = Q(f),\;\;(t,x,v) \in \R_+ ^\star \times \R^2 \times \R^2,\;\;f(0,x,v) = \ave{\fin}(x,v),\;\;(x,v) \in \R^2 \times \R^2.
\end{equation}
\end{thm}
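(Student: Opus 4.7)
The plan is to derive a uniform $\ltF$ bound for the family $(\fe)$, extract a weak-$\star$ limit $f$, identify the constraint $\mathcal{T} f = 0$, close the Boltzmann equation by applying the average operator $\ave{\cdot}$ to \eqref{Equ6}, and finally identify the initial datum as $\ave{\fin}$. For the a priori estimate I would multiply \eqref{Equ6} by $\fe/F$ and integrate on $(x,v)$: the magnetic term $\frac{q}{m}\frac{^\perp v}{\eps}B \cdot \nv \fe$ is divergence free in $v$, the drift-electric transport $v \cdot \nx - \frac{q}{m}\nx \phi \cdot \nv$ admits $F$ as equilibrium so its contribution vanishes after integration, and the remaining collision contribution $\intxv{Q(\fe)\fe/F} = \intx{e^{q\phi/\theta}}\intv{Q(\fe)\fe/M}$ is non positive by the dissipativity of $Q$ in $\ltm$. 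This yields $\|\fe(t)\|_{\ltF}^2 \leq \|\fin\|_{\ltF}^2$ uniformly in $\eps, t$, hence by Banach--Alaoglu a subsequence $\fek$ converges weakly-$\star$ to some $f$ in $\liltF$.

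Next, multiplying \eqref{Equ6} by $\eps$ and passing to the limit in the sense of distributions gives $\mathcal{T} f = 0$, so $f(t,x,v) = g(t,x,|v|)$ for some $g$. I would then apply $\ave{\cdot}$ to \eqref{Equ6}: since $\mathrm{range}\,\mathcal{T} \perp \ker \mathcal{T}$ in $\ltv$, one has $\ave{\mathcal{T} u} = 0$ for every $u$, so the singular term vanishes and each $\fek$ satisfies
\[
\partial _t \ave{\fek} + \ave{v \cdot \nx \fek} + \frac{q}{m} \ave{E \cdot \nv \fek} = \ave{Q(\fek)}.
\]
The linearity of $Q$ and of $\ave{\cdot}$, combined with the rotation invariance of $M$ and of $s(v,\vp) = \sigma(|v-\vp|)$, give the crucial commutation $\ave{Q(h)} = Q(\ave{h})$: the change of variable $\vp \mapsto R(-\oc s)\vp$ in the inner integral of $Q(h)(x, R(-\oc s)v)$ followed by the $s$-average identifies the result with $Q(\ave{h})$.

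Passing to the weak-$\star$ limit in each of the linear terms above, one obtains $\partial _t f + \ave{v \cdot \nx f} + \frac{q}{m} \ave{E \cdot \nv f} = Q(f)$. The constraint $f = g(t,x,|v|)$ yields $\nv f = g_r\, v/|v|$ and $v \cdot \nx f = v \cdot \nx g$, whose $s$-averages over the flow $v \mapsto R(-\oc s)v$ vanish because $\int _0 ^{2\pi} R(\alpha) v \,\mathrm{d}\alpha = 0$; this reduces the equation to the space-homogeneous Boltzmann equation $\partial _t f = Q(f)$. For the initial condition, I would test the weak formulation of \eqref{Equ6} against $\chi(t)\varphi(x,v)$ with $\chi \in C^\infty _c (\R_+)$, $\chi(0) = 1$, and integrate by parts in time: the fast cyclotronic oscillation projects $\fin$ onto $\ker \mathcal{T}$ in the limit, yielding $f(0, \cdot) = \ave{\fin}$. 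Uniqueness for \eqref{Equ55} then upgrades the subsequential convergence to convergence of the full family $(\fe)_{\eps >0}$.

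The main obstacle is the commutation $\ave{Q(h)} = Q(\ave{h})$, which rests crucially on the rotation invariance of the scattering cross section $s(v,\vp) = \sigma(|v - \vp|)$ and would fail for a generic $s$; checking this algebraic identity cleanly (together with the companion property that $Q$ preserves $\ker \mathcal{T}$) is the heart of the argument. The remaining steps---the $\ltF$ estimate, weak-$\star$ compactness, annihilation of the unknown Lagrange multiplier $f^1$ by averaging, and the vanishing of $\ave{v \cdot \nx f}$ and $\ave{E \cdot \nv f}$ on $\ker \mathcal{T}$---are then routine once the algebraic cancellations are in hand.
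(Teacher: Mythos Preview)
Your proposal is correct and follows essentially the same strategy as the paper: uniform $\ltF$ bound, weak-$\star$ compactness, identification of the constraint $\mathcal{T}f=0$, elimination of the singular term by averaging (the paper does this dually, testing against $\varphi\in C^1_c\cap\ker\mathcal{T}$ and using the symmetry of $Q$ to pass to the limit in the collision term), and the commutation $\ave{Q(\cdot)}=Q(\ave{\cdot})$ coming from the rotation invariance of $\sigma(|v-\vp|)$. The only imprecision is the phrase ``the fast cyclotronic oscillation projects $\fin$ onto $\ker\mathcal{T}$'': the initial datum is fixed and does not oscillate; $\ave{\fin}$ appears simply because the limit weak formulation is tested against $\varphi\in\ker\mathcal{T}$, so that $\int\fin\,\varphi=\int\ave{\fin}\,\varphi$.
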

Strong convergence results hold true as well. The main idea is to introduce a smooth first order corrector, under appropriate smoothness hypotheses on the initial condition.
\begin{thm}
\label{ConvStrong} Assume that the scattering cross section satisfies $s(v,\vp) = \sigma (|v- \vp|) \in [s_0, S_0], v, \vp \in \R ^2$ for some function $\sigma \in W^{2,\infty}$, that the electro-magnetic field $(E(x) = - \nabla _x \phi, B(x))$ belongs to $W^{1,\infty} (\R^2)$ such that
\[
\intx{\exp \left ( - \frac{q\phi (x)}{\theta} \right ) } < +\infty,\;\;\inf _{x \in \R^2} B(x) >0
\]
and that the initial condition verifies
\[
{\cal T} \fin = 0,\;\;( 1 + |v|^2 ) \;|\nabla ^2 _{x,v} \fin |\in \ltF.
\]
Then for any $T>0$ there is a constant $C_T$ such that 
\[
\|\fe (t) - f(t) \|_\ltF \leq C_T \eps,\;\;t \in [0,T],\;\;\eps >0.
\]
\end{thm}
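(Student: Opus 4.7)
The strategy is a two-scale Hilbert expansion combined with a weighted $L^2$ energy estimate. The limit $f$ obtained in Theorem~\ref{WeakConv} satisfies ${\cal T} f=0$ and $\partial_t f = Q(f)$, so the obstruction to $\tilde f^\eps := f + \eps f^1$ being an approximate solution of \eqref{Equ6} at order $\eps^0$ is precisely $v\cdot\nabla_x f + \frac{q}{m} E\cdot\nabla_v f$. Because $f(t,x,\cdot)$ is radial in $v$ (a consequence of ${\cal T}f=0$), this obstruction has zero gyro-average by the computation recalled just after \eqref{Equ8}, so it lies in $\ran {\cal T}$. I therefore set
\[
f^1 := -{\cal T}^{-1}\Bigl(v\cdot\nabla_x f + \frac{q}{m} E\cdot\nabla_v f\Bigr), \qquad \ave{f^1}=0,
\]
using the bounded pseudo-inverse of ${\cal T}$ on $\ran {\cal T}$ provided by the properties of the average operator from Section~\ref{AveTec}.

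Plugging $\tilde f^\eps$ into the left-hand side of \eqref{Equ6} and using ${\cal T} f=0$, the definition of $f^1$ and $\partial_t f = Q(f)$ annihilates the $\eps^{-1}$ and $\eps^0$ contributions, leaving the residual
\[
\mathcal{R}^\eps := \eps\,\Bigl[\partial_t f^1 + v\cdot\nabla_x f^1 + \frac{q}{m} E\cdot\nabla_v f^1 - Q(f^1)\Bigr].
\]
The delicate point of the whole argument is to bound $\|\mathcal{R}^\eps(t)\|_\ltF \leq C_T \eps$ uniformly on $[0,T]$: the pseudo-inverse ${\cal T}^{-1}$ is an integration in the gyro-angle and does not absorb polynomial factors in $|v|$, so the term $v\cdot\nabla_x f^1$ costs one extra factor of $|v|$ relative to $\nabla^2_{x,v} f$. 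This is precisely why the hypothesis reads $(1+|v|^2)|\nabla^2_{x,v}\fin|\in\ltF$ rather than a plain $H^2$ bound. Since $f(t,x,\cdot)$ solves the space-homogeneous Boltzmann equation \eqref{Equ55} with $x$ as parameter, this weighted $H^2$ norm is propagated in time by standard estimates using $\sigma \in W^{2,\infty}$ and $(E,B)\in W^{1,\infty}$, yielding the required control on $f^1$ and on its time and first space-velocity derivatives in $\ltF$.

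Finally, set $r^\eps := \fe - \tilde f^\eps$, which satisfies \eqref{Equ6} with extra source $-\mathcal{R}^\eps$ and initial datum $r^\eps(0)=-\eps f^1(0)$ of order $\eps$ in $\ltF$. Multiplying by $r^\eps/F$ and integrating over $\R^2\times\R^2$, the transport terms drop: a direct computation using $F(x,v) = M(v)\exp(-q\phi(x)/\theta)$ and $^\perp v \cdot v = 0$ gives
\[
v\cdot\nabla_x F + \frac{q}{m}\Bigl(E + \frac{^\perp v}{\eps} B\Bigr)\cdot\nabla_v F = 0,
\]
while the transport field is divergence-free in $(x,v)$, so integration by parts yields zero. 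The collision contribution is non-positive by the usual $H$-theorem computation exploiting the symmetry \eqref{Equ3} of $s$, namely $\intxv Q(r^\eps)\,r^\eps/F \leq 0$. One is then left with
\[
\tfrac{1}{2}\tfrac{d}{dt}\|r^\eps\|_\ltF^2 \leq \|\mathcal{R}^\eps\|_\ltF \|r^\eps\|_\ltF \leq C_T \eps \|r^\eps\|_\ltF,
\]
and Gronwall yields $\|r^\eps(t)\|_\ltF \leq C_T \eps$ on $[0,T]$. The conclusion follows from the triangle inequality $\|\fe(t) - f(t)\|_\ltF \leq \|r^\eps(t)\|_\ltF + \eps \|f^1(t)\|_\ltF$, both terms being $O(\eps)$ by the previous step.
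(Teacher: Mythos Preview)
Your proposal is correct and follows essentially the same approach as the paper: define the corrector $f^1$ by inverting $\mathcal{T}$ on the zero-average obstruction, derive the equation for $r^\eps=\fe-f-\eps f^1$ with residual $\eps\mathcal{R}(f^1)$, and close via the weighted $L^2_F$ energy estimate using the dissipativity of $Q$. The paper differs only in that it makes the corrector fully explicit, $f^1=\frac{^\perp v}{\oc}\cdot\nabla_x f-\frac{q}{m\oc}{}^\perp E\cdot\nabla_v f$, and devotes separate propositions to the time-propagation of the weighted derivative bounds on $f$ that you summarize as ``standard estimates''.
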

The leading order term in the expansion of $\fe$ satisfies, at any fixed space point, the homogeneous Boltzmann equation. There is no phase space transport, the collision operator $Q$ is local in space and there is no current, since $f(t,x,\cdot)$ depends only on $|v|$. A much more interesting model will be the second order approximation, since we expect to retrieve the usual drifts along the orthogonal directions with respect to the magnetic lines, but also a collision drift. Certainly, the second order approximation will appear as a first order perturbation of the space homogeneous Boltzmann equation, involving a new transport operator but also a new collision operator $Q + \eps Q^1$, which is non local in space anymore. The full expression of the operator $Q^1$ is rather complicated, see Corollary \ref{Q1}, but it simplifies a lot when the scattering cross section is constant
\[
Q^1(f) = \frac{s}{\tau} M(v) \left [\Divx \left ( \frac{^\perp j_f (x)}{\oc} \right ) + \frac{\rho _F (x)}{\oc} \;^\perp v \cdot \nx \left ( \frac{\rho _f (x)}{\rho _F (x)} \right )    \right ]
\]
where $\rho _f = \intvp{f}$, $\rho _F = \intvp{F}$, $j_f = \intvp{\vp f}$.
For the sake of the presentation we formulate the second order convergence result in the context of constant scattering cross section but we point out that similar results hold true in the general case.
\begin{thm}
\label{ThSecOrdApp}
Assume that the electro-magnetic field $(E(x) = - \nabla _x \phi (x), \Be (x) = B(x)/\eps)$ is smooth such that 
\[
\intx{\exp \left ( - \frac{q\phi (x)}{\theta} \right ) } <+\infty,\;\;\inf _{x \in \R^2} B(x) >0.
\]
We denote by $\fe$ the weak solution of \eqref{Equ6} with the initial condition $\fe (0) = \fe _{\mathrm{in}}$ and by $\tfe$ the weak solution of
\begin{eqnarray}
& & \label{Equ91} \partial _t \tfe + \left ( \frac{^\perp E}{\Be} - \frac{|v|^2}{2\omega _c ^\eps } \;\frac{^\perp \nabla _x \Be}{\Be}\right ) \cdot \nx \tfe + \left ( \frac{^\perp E}{2\Be} \cdot \frac{\nabla _x \Be}{\Be} \right ) v \cdot \nv \tfe \nonumber \\
& & = \frac{s}{\tau} M(v) \left ( \rho _{\tfe} - \frac{\tfe}{M(v)} - \frac{\rho _F}{\omega _c ^\eps} \;v \cdot \;^\perp \nabla _x \left ( \frac{\rho _{\tfe}}{\rho _F} \right ) + \Divx \left ( \frac{^\perp j _{\tfe}}{\omega _c ^\eps} \right ) \right )  
\end{eqnarray}
with the initial condition
\begin{equation}
\label{Equ92} \tfe (0) = \fin + \frac{^\perp v}{\omega _c ^\eps} \cdot \nx \fin - \frac{^\perp E}{\Be} \cdot \nabla _v \fin,\;\;{\cal T} \fin = 0
\end{equation}
where $\omega _c ^\eps = \frac{q\Be}{m} = \frac{\omega _c}{\eps}$ is the cyclotronic frequency. If the family of the initial conditions $(\fe _{\mathrm{in}})_{\eps > 0}$ satisfies
\[
\fe _{\mathrm{in}} = \fin + \frac{^\perp v }{\omega _c ^\eps} \cdot \nx \fin - \frac{^\perp E}{\Be} \cdot \nabla _v \fin + {\cal O} (\eps ^2)
\]
then we have $\fe = \tfe + {\cal O} (\eps ^2)$.
\end{thm}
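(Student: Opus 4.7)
The plan is to use the two-scale Hilbert expansion to build an explicit $\mathcal{O}(\eps ^2)$-approximate solution of \eqref{Equ6} and then close the comparison by a weighted $\ltF$ stability estimate for the linear Boltzmann equation.

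First I would insert $\fe = f + \eps f^1 + \eps ^2 f^2 + \mathcal{O}(\eps ^3)$ in \eqref{Equ6} and match powers of $\eps$. At order $\eps ^{-1}$ the constraint ${\cal T} f = 0$ gives $f(t,x,v) = g(t,x,|v|)$; at order $\eps ^0$ averaging along the cyclotronic flow (and using that $\ave{\cdot}$ commutes with $Q$ for radially symmetric scattering cross section, cf. Section \ref{AveTec}) yields $\partial _t f = Q(f)$, while subtracting this from the unaveraged identity determines ${\cal T} f^1$. The identities
\[
{\cal T}\!\left ( \frac{^\perp v}{\oc }\cdot \nx f \right ) = - v \cdot \nx f,\qquad {\cal T}\!\left ( -\frac{^\perp E}{B}\cdot \nv f \right ) = - \frac{q}{m}\,E \cdot \nv f,
\]
valid on $\ker {\cal T}$ (easily checked upon writing ${\cal T} = -\oc \, \partial _\phi$ in polar velocity coordinates), produce the explicit antiderivative $f^1 = \frac{^\perp v}{\oc}\cdot \nx f - \frac{^\perp E}{B}\cdot \nv f + \ave{f^1}$, with the kernel part $\ave{f^1}$ still free. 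This already matches the structure of the initial datum \eqref{Equ92}.

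Next I would examine order $\eps ^1$, namely $\partial _t f^1 + v \cdot \nx f^1 + \frac{q}{m} E \cdot \nv f^1 + {\cal T} f^2 = Q(f^1)$. Averaging eliminates the Lagrange multiplier ${\cal T} f^2$, and combining with the order $\eps ^0$ balance gives a closed transport--collision equation for $\tfe := f + \eps f^1$. Substituting the explicit form of $f^1$ and invoking the commutation lemmas of Section \ref{AveTec}, the averages $\ave{v\cdot \nx f^1}$ and $\ave{E\cdot \nv f^1}$ produce the electric cross-field drift $^\perp E/\Be$, the magnetic gradient drift $-\frac{|v|^2}{2\omega _c ^\eps}\,\frac{^\perp \nx \Be}{\Be}$, and the coupling $\frac{^\perp E}{2\Be}\cdot \frac{\nx \Be}{\Be}\,v\cdot \nv \tfe$ appearing in \eqref{Equ91}; similarly $\ave{Q(f^1)}$ delivers the non-local correction $\eps Q^1$ in precisely the form given after \eqref{Equ9} (this is Corollary \ref{Q1} specialized to constant $s$). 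The outcome is that $\tfe$ solves \eqref{Equ6} with a residual $\eps ^2 R^\eps$ whose $L^\infty (0,T;\ltF)$ norm is uniformly bounded under the stated regularity hypotheses on $(\fin , E, B)$ and the assumption $\inf _x B > 0$, bounds on $R^\eps$ following from propagation of regularity for the space-homogeneous Boltzmann equation governing $f$.

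Finally, setting $\tre := \fe - \tfe$, by construction $\tre$ solves \eqref{Equ6} with an $\mathcal{O}(\eps ^2)$ source and an $\mathcal{O}(\eps ^2)$ initial datum (from the hypothesis on $\fe _{\mathrm{in}}$). Multiplying by $\tre / F$, integrating over $\R ^2 \times \R ^2$ and using the dissipativity of the linear Boltzmann operator in $\ltF$ (the entropy-type inequality underlying Theorem \ref{WeakConv}), Gr\"onwall's lemma yields $\sup _{t \in [0,T]} \|\tre (t)\|_\ltF \leq C_T \,\eps ^2$, which is the claim. The main obstacle is the bookkeeping in the middle step: matching the averages of the transport and collision terms applied to the explicit $f^1$ with exactly the drifts and the non-local operator appearing in \eqref{Equ91} requires careful handling of the commutators $[{\cal T}, \nx]$ and $[{\cal T}, \nv]$, together with the inversion of ${\cal T}$ on $(\ker {\cal T})^\perp$, ensuring that no stray $\mathcal{O}(\eps)$ terms remain in the residual.
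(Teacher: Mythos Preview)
There is a genuine gap in the final stability step. You claim that the truncated expansion $f+\eps f^1$ (which you relabel $\tfe$) solves \eqref{Equ6} with an $\mathcal{O}(\eps^2)$ residual, and then close by Gr\"onwall. But plugging $f+\eps f^1$ into \eqref{Equ6} leaves the residual
\[
\eps\bigl(\partial_t f^1 + a\cdot\nxv f^1 - Q(f^1)\bigr)\;=\;-\,\eps\,{\cal T}f^2,
\]
which is $\mathcal{O}(\eps)$, not $\mathcal{O}(\eps^2)$. Averaging kills ${\cal T}f^2$ when you \emph{derive} \eqref{Equ91}, but in the $\ltF$ energy estimate for \eqref{Equ6} the test function is $(\fe-f-\eps f^1)/F$, which is \emph{not} in $\ker{\cal T}$ (neither $\fe$ nor $h^1$ is), so there is no orthogonality to exploit and the source term is genuinely of size $\eps$. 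The paper repairs this by carrying the next corrector: it builds a smooth $f^2$ with $\partial_t f^1+a\cdot\nxv f^1+{\cal T}f^2=Q(f^1)$, plugs $f+\eps f^1+\eps^2 f^2$ into \eqref{Equ6}, and only then obtains an honest $\mathcal{O}(\eps^2)$ remainder for the comparison with $\fe$.

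There is a second, related gap: in the theorem $\tfe$ is the \emph{exact} solution of \eqref{Equ91}, not the truncated expansion $f+\eps f^1$. These two objects agree only up to $\mathcal{O}(\eps^2)$, and proving that requires a separate stability estimate, this time for \eqref{Equ91}. The paper obtains it from the entropy inequality for the corrected operator $Q+\eps Q^1$ (the construction of $Q^1$ in Proposition~\ref{Adjoint} is arranged precisely so that $(Q^1 f,f)_{\ltF}\le 0$), and then concludes by the triangle inequality $\fe-\tfe=(\fe-(f+\eps f^1))+((f+\eps f^1)-\tfe)$. Your outline collapses these two comparisons into one and thereby loses an order of $\eps$.
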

\begin{remark}
Notice that the coefficients in \eqref{Equ91} still depend on $\eps$ but they are stable when $\eps \searrow 0$. In particular the numerical approximation can be achieved uniformly with respect to $\eps>0$, using a time step not depending on $\eps$. 
\end{remark}

\section{Averaging along a characteristic flow}
\label{AveTec} 
\indent

This section is devoted to the average technique. It consists in averaging along the characteristic flow associated to a divergence free vector field. When applied to the gyrokinetic theory, this method is known as the gyroaverage technique. It allows us to derive limit models by neglecting the fluctuations associated to fast motions, as the cyclotron rotation of particles around the magnetic lines. From the mathematical point of view, it reduces to averaging along the characteristic flow associated to the dominant advection field (here the magnetic force in the framework of magnetic confinement) and it is also equivalent to projecting onto the subspace of functions which are left invariant by the same flow. These tools are well known and have been used when establishing the guiding center approximation in three dimensions for general magnetic shapes \cite{BosGuidCent3D}, the finite Larmor radius regime \cite{BosAsyAna} and, more generally, when studying transport equations with disparate advection fields \cite{BosTraSin}. We point out that the averaging technique applies to many other cases : models which take into account the mass ratio between electrons and ions subject to strong magnetic fields \cite{BosNeg09} and models with fast oscillating magnetic fields \cite{Bos12}. In the present work we generalize the averaging method to collisional models. Motivated by the study of the Boltzmann equation, we revisit some results concerning the average of first order differential operators which will combine with other new results in order to study the average of collision operators.

\subsection{Gyroverage operator}
\label{AveOpe}
\indent

We recall the definition and the properties of the average operator corresponding to the transport operator ${\cal T}$ given in \eqref{TransT},  now in divergence form, whose definition  in the $\ltxv$ setting is given by  
$$
{\cal T}u = \mathrm{div}_v \left ( \omega _c (x) \;u(x,v) \; ^\perp v\right ),\;\;\omega _c (x) = \frac{qB(x)}{m}
$$
for any function $u$ in the domain
$$
\D({\cal T}) = \{ u (x,v) \in \ltxv{}\;:\; \mathrm{div}_v \left ( \omega _c (x) \;u \;^\perp v \right ) \in \ltxv{}\}.
$$
We denote by $\|\cdot \|$ the standard norm of $\ltxv{}$ and by $(X,V)(s;x,v)$ the characteristics associated to $\omega _c (x) \;^\perp v  \cdot \nabla _v$
\begin{equation}
\label{Equ11}
\frac{\mathrm{d}X}{\mathrm{d}s} = 0,\;\;\frac{\mathrm{d}V}{\mathrm{d}s} = \omega _c (X(s)) \;^\perp V(s),\;\;(X,V)(0) = (x,v).
\end{equation}
By direct computations we deduce that 
\begin{equation}
\label{EquFlow}
X(s;x,v) = x, \;\;V(s;x,v) = R(-\oc s)v.
\end{equation}
A complete family of invariants for the flow \eqref{EquFlow} is given by $\{x_1, x_2, |v| = ((v_1)^2 + (v_2)^2) ^{1/2}\}$ and therefore the kernel of ${\cal T}$ is given by those functions in $\ltxv{}$ depending only on $x_1, x_2$ and $|v|$. Notice that all trajectories are $\tc = 2\pi/\oc $ periodic and we introduce the average operator cf. \cite{BosTraSin}
\begin{eqnarray}
\ave{u}(x,v) & = & \frac{1}{\tc} \int _0 ^ {\tc} u (X(s;x,v), V(s;x,v))\;\mathrm{d}s \nonumber \\
& = & \frac{1}{2\pi } \int _0 ^{2\pi} u (x, R(-\alpha)v) \;\mathrm{d}\alpha \ =\  \frac{1}{2\pi } \int _0 ^{2\pi} u (x, R(\alpha)v) \;\mathrm{d}\alpha \nonumber
\end{eqnarray}
for any function $u \in \ltxv{}$. The following two results are justified in \cite{BosGuidCent3D}, Propositions 2.1, 2.2. The first one states that averaging reduces to orthogonal projection onto the kernel of ${\cal T}$. The second one concerns the invertibility of ${\cal T}$ on the subspace of zero average functions and establishes a Poincar\'e inequality.
\begin{pro}
\label{AverageProp} The average operator is linear continuous. Moreover it coincides with the orthogonal projection on the kernel of ${\cal T}$ {\it i.e.,}
$$
\ave{u} \in \ker {\cal T}\ \;\mathrm{and}\ \;\intxv{(u - \ave{u}) \varphi } = 0,\;\;\forall \; \varphi \in \ker {\cal T}.
$$
\end{pro}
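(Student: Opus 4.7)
The plan is to verify three separate claims that together give the stated characterization of $\ave{\cdot}$ as the orthogonal projection onto $\ker {\cal T}$: (i) $\ave{\cdot}$ is linear and $L^2$-bounded; (ii) $\ave{u}\in\ker{\cal T}$; (iii) $u-\ave{u}\perp\ker{\cal T}$ in $\ltxv$. Throughout I would work with the unitary group $U(s)u(x,v)=u(x,R(-\oc s)v)$ on $\ltxv$ (unitarity follows because each $v\mapsto R(-\oc(x)s)v$ is an isometry of $\R^2$ with unit Jacobian), noting that $\ave{u}=\frac{1}{T_c}\int_0^{T_c}U(s)u\,\mathrm{d}s$.

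For (i), linearity is immediate from the definition. For the $L^2$-bound, apply Jensen's inequality pointwise,
\[
|\ave{u}(x,v)|^2\leq \frac{1}{2\pi}\int_0^{2\pi}|u(x,R(-\alpha)v)|^2\,\mathrm{d}\alpha,
\]
integrate in $(x,v)$, swap the order using Fubini, and perform the change of variable $v\mapsto R(\alpha)v$ in the inner integral (Jacobian $1$). This yields $\|\ave{u}\|\leq \|u\|$, so $\ave{\cdot}$ is continuous.

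For (ii), I would exploit $T_c$-periodicity of the flow: for every $t\in\R$, the group property gives $U(t)\ave{u}=\frac{1}{T_c}\int_0^{T_c}U(t+s)u\,\mathrm{d}s=\ave{u}$. Thus $\ave{u}$ is invariant under $U$. Now for smooth test functions one checks directly that the infinitesimal generator of $U(\cdot)$ is $-\oc\,^\perp v\cdot\nabla_v=-{\cal T}$ (using $\Divv({}^\perp v)=0$ so that the divergence form $\Divv(\oc u\,^\perp v)$ coincides with the advective form $\oc\,^\perp v\cdot\nabla_v u$). Passing to weak form and integrating against an arbitrary test function, the invariance $U(t)\ave{u}=\ave{u}$ differentiated at $t=0$ forces ${\cal T}\ave{u}=0$ in the distributional sense, hence in $\ltxv$ since $\ave{u}\in\ltxv$.

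For (iii), take any $\varphi\in\ker{\cal T}$. The key point, which I consider the main obstacle, is justifying that elements of $\ker{\cal T}$ are genuinely invariant under the flow $U(\cdot)$; once this is in hand the calculation is a one-liner. I would argue as follows: because $-{\cal T}$ generates the unitary group $U(\cdot)$, functional calculus (or a direct mollification-then-characteristics argument using the explicit formula for $U(s)$) implies $U(s)\varphi=\varphi$ in $\ltxv$ for all $s$ whenever ${\cal T}\varphi=0$. Then, using the change of variable $v\mapsto R(-\alpha)v$ and the invariance of $\varphi$,
\[
\intxv{u\,\varphi}=\frac{1}{2\pi}\int_0^{2\pi}\!\intxv{u(x,R(-\alpha)v)\,\varphi(x,R(-\alpha)v)}\,\mathrm{d}\alpha=\frac{1}{2\pi}\int_0^{2\pi}\!\intxv{u(x,R(-\alpha)v)\,\varphi(x,v)}\,\mathrm{d}\alpha=\intxv{\ave{u}\,\varphi},
\]
so $\intxv{(u-\ave{u})\varphi}=0$. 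Combined with (i) and (ii), this characterizes $\ave{\cdot}$ as the orthogonal projection onto $\ker{\cal T}$.
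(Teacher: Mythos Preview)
Your proof is correct. The paper does not actually supply its own proof of this proposition; it cites \cite{BosGuidCent3D}, Propositions 2.1 and 2.2. Your argument is the natural one and almost certainly matches the referenced proof: boundedness via Jensen and the rotation-invariance of Lebesgue measure, flow-invariance of $\ave{u}$ from periodicity combined with the identification of $-{\cal T}$ as the generator of $U(\cdot)$, and orthogonality from the fact that $\ker{\cal T}$ consists precisely of flow-invariant functions. One minor remark: since $T_c(x)=2\pi/\omega_c(x)$ varies with $x$, the expression $\ave{u}=\frac{1}{T_c}\int_0^{T_c}U(s)u\,\mathrm{d}s$ should be read pointwise in $x$ rather than as a Bochner integral in $\ltxv$; switching to the angle parametrization $\frac{1}{2\pi}\int_0^{2\pi}u(x,R(\alpha)v)\,\mathrm{d}\alpha$, as you do in parts (i) and (iii) and as the paper does, removes this cosmetic issue.
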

\begin{remark} Notice that we have the orthogonal decomposition of $\ltxv{}$ into invariant functions along the characteristics \eqref{EquFlow} and zero average functions
\[
u = \ave{u} + (u - \ave{u}),\;\;\intxv{(u - \ave{u})\ave{u}} = 0.
\]
Taking into account that ${\cal T} ^\star = - {\cal T}$, the equality $\ave{\cdot} = \mathrm{Proj}_{\ker {\cal T}}$ implies 
\[
\ker \ave{\cdot} = (\ker {\cal T})^\perp = (\ker {\cal T}^\star )^\perp = \overline{\ran {\cal T}}.
\]
In particular $\ran {\cal T} \subset \ker \ave{\cdot}$.
\end{remark}
\medskip
The next point is very important as it relates to  the solvability of ${\cal T}u = v$. We prove that the range of ${\cal T}$ is closed and  derive the corresponding Poincar\'e type inequality, provided the magnetic field remains away from $0$.
\begin{pro}
\label{TransportProp}
We assume that $\inf _{x \in \R ^2} B (x)  >0$. Then ${\cal T}$ restricted to $\ker \ave{\cdot}$ is one to one map onto $\ker \ave{\cdot}$. Its inverse belongs to ${\cal L }(\ker \ave{\cdot}, \ker \ave{\cdot} )$ and we have the Poincar\'e inequality
\[
\|u \| \leq \frac{2 \pi }{|\omega _0 |} \|{\cal T} u \|,\;\;\omega _0 = \frac{q}{m} \inf _{x \in \R ^2} B(x) \neq 0
\]
for any $u \in \D({\cal T}) \cap \ker \ave{\cdot}$.
\end{pro}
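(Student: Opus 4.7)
My plan is to produce an explicit right inverse of $\mathcal{T}$ on $\ker\ave{\cdot}$ by antidifferentiating along the cyclotronic orbits, and then to extract the Poincar\'e inequality directly from that construction. Injectivity is immediate from Proposition \ref{AverageProp}: if $u\in\D(\mathcal{T})\cap\ker\ave{\cdot}$ and $\mathcal{T}u=0$, then $u\in\ker\mathcal{T}$, hence $u=\ave{u}=0$. That $\mathcal{T}(\D(\mathcal{T}))\subset\ker\ave{\cdot}$ follows because, along each periodic orbit $s\mapsto(X(s;x,v),V(s;x,v))$, one has $\mathcal{T}u(X(s),V(s))=\tfrac{\mathrm{d}}{\mathrm{d}s}u(X(s),V(s))$, whose integral over the period $T_c(x)$ vanishes.

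For surjectivity onto $\ker\ave{\cdot}$, given $g\in\ker\ave{\cdot}$ I set
\[
W(s;x,v)=\int_0^s g(x,R(-\oc\tau)v)\,\mathrm{d}\tau,\qquad u(x,v)=-\frac{1}{T_c(x)}\int_0^{T_c(x)}W(s;x,v)\,\mathrm{d}s.
\]
Since $W(T_c(x);x,v)=T_c(x)\ave{g}(x,v)=0$, the function $W$ extends $T_c$-periodically in $s$, and differentiating along the flow \eqref{EquFlow} yields $\mathcal{T}u=g$. The normalizing constant is chosen precisely so that the orbit average of $u$ vanishes, i.e.\ $\ave{u}=0$, placing $u$ in $\D(\mathcal{T})\cap\ker\ave{\cdot}$.

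For the Poincar\'e estimate I apply Cauchy--Schwarz twice: first to $u$, obtaining $|u(x,v)|^2\le T_c(x)^{-1}\int_0^{T_c(x)}|W(s;x,v)|^2\,\mathrm{d}s$, and then to $W$, yielding $|W(s;x,v)|^2\le s\int_0^s|g(x,R(-\oc\tau)v)|^2\,\mathrm{d}\tau\le T_c(x)\int_0^{T_c(x)}|g(x,R(-\oc\tau)v)|^2\,\mathrm{d}\tau$. Combining gives the pointwise bound $|u(x,v)|^2\le T_c(x)\int_0^{T_c(x)}|g(x,R(-\oc\tau)v)|^2\,\mathrm{d}\tau$. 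Integrating in $(x,v)$ and invoking the rotation invariance of Lebesgue measure on $v$-space in each inner $\tau$-slice collapses the $\tau$-integral to $\|g(x,\cdot)\|_{L^2(\R^2)}^2$, whence $\|u\|^2\le\int_{\R^2}T_c(x)^2\,\|g(x,\cdot)\|_{L^2(\R^2)}^2\,\mathrm{d}x\le T_0^2\|g\|^2$ with $T_0=2\pi/|\omega_0|$. Since $g=\mathcal{T}u$, this is exactly $\|u\|\le(2\pi/|\omega_0|)\|\mathcal{T}u\|$, and applied to arbitrary $u\in\D(\mathcal{T})\cap\ker\ave{\cdot}$ it simultaneously gives continuity of $\mathcal{T}^{-1}$ and $\ran\mathcal{T}|_{\ker\ave{\cdot}}=\ker\ave{\cdot}$.

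The main delicacy is regularity rather than anything deep: the formula for $u$ involves pointwise-in-$\tau$ evaluations of $g$, so for merely $L^2$ data one must either interpret $W$ via Fubini (so $W(s;x,\cdot)\in L^2(\R^2)$ for almost every $(s,x)$) or smooth $g$, build the approximating inverse, and pass to the limit using the uniform Poincar\'e bound. Once this is handled, everything else follows directly from the explicit formula and Proposition \ref{AverageProp}.
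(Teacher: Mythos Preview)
Your argument is correct. The injectivity via Proposition~\ref{AverageProp}, the explicit orbit--antiderivative formula for the right inverse, the verification that $\ave{u}=0$ and $\mathcal{T}u=g$ (which hinges on the $T_c$-periodicity of $W$ that you establish), and the two Cauchy--Schwarz steps leading to $\|u\|\le T_0\|\mathcal T u\|$ all go through as written; your closing remark on regularity is also well taken.

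As for comparison: the paper does not actually prove this proposition in the text but defers to \cite{BosGuidCent3D}, Propositions~2.1 and~2.2. Your construction---antidifferentiation along the periodic characteristic flow, normalized to have zero orbit average---is the standard route and is in the spirit of that reference, so there is no substantive divergence to report.
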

Another useful result is given by
\begin{pro}
\label{IntByPart}
Assume that $\varphi $ is a $C^1(\R^2 \times \R^2)$ function such that $\varphi$ and ${\cal T} \varphi $ are bounded on $\R^2 \times \R^2$. Then, for any function $u \in \D ({\cal T}) $ we have the integration by parts formula
\[
\ave{\varphi \;{\cal T}u} + \ave{u \;{\cal T} \varphi } = 0.
\]
\end{pro}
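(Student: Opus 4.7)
The plan is to exploit the Leibniz rule for the derivation ${\cal T}$ together with the fact that averaging annihilates the range of ${\cal T}$.

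First I would observe that, because $\Divv\,{}^\perp v = 0$, the divergence form ${\cal T}u = \Divv(\oc\,u\,{}^\perp v)$ coincides with the non-divergence form ${\cal T}u = \oc\,{}^\perp v\cdot\nv u$. Hence ${\cal T}$ acts as a first-order derivation and satisfies the Leibniz rule
\[
{\cal T}(\varphi u) \ = \ \varphi\,{\cal T}u + u\,{\cal T}\varphi,
\]
understood in the distributional sense for $u \in \D({\cal T})$ and $\varphi \in C^1$. Under the hypotheses the right-hand side lies in $\ltxv{}$ (since $\varphi,\,{\cal T}\varphi$ are bounded and $u,\,{\cal T}u\in\ltxv{}$), and $\varphi u\in\ltxv{}$, so $\varphi u\in\D({\cal T})$ and the identity holds in $\ltxv{}$. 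Applying the average operator to both sides then yields
\[
\ave{\varphi\,{\cal T}u} + \ave{u\,{\cal T}\varphi} \ = \ \ave{{\cal T}(\varphi u)},
\]
and the right-hand side vanishes because $\ran{\cal T}\subset\ker\ave{\cdot}$, as recorded in the Remark following Proposition \ref{AverageProp} (a consequence of $\ave{\cdot}=\mathrm{Proj}_{\ker{\cal T}}$ combined with ${\cal T}^\star=-{\cal T}$).

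The main obstacle is the rigorous justification of the distributional Leibniz identity when $u$ lies only in $\D({\cal T})$, since $\nv u$ need not be an $\ltxv{}$ function separately. I would handle this by a standard regularization: approximate $u$ by smooth functions (mollification in $v$ at fixed $x$), for which the rule is classical, and pass to the limit using the uniform bounds on $\varphi$ and ${\cal T}\varphi$ together with $L^2$-convergence on the graph of ${\cal T}$. A cleaner alternative, bypassing distributions altogether, is a direct characteristic computation: from
\[
({\cal T}u)(x,R(-\alpha)v) \ = \ \oc\,\frac{d}{d\alpha}\bigl[u(x,R(-\alpha)v)\bigr]
\]
valid for smooth $u$, one integrates by parts in $\alpha$ on $[0,2\pi]$ inside the definition of $\ave{\varphi\,{\cal T}u}$; the boundary contributions cancel by $2\pi$-periodicity of $R(-\alpha)$, giving $\ave{\varphi\,{\cal T}u}=-\ave{u\,{\cal T}\varphi}$ pointwise, and density in $\D({\cal T})$ extends the formula to the full domain.
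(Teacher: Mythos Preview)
Your proposal is correct and follows essentially the same approach as the paper: use the Leibniz rule to write ${\cal T}(\varphi u)=\varphi\,{\cal T}u+u\,{\cal T}\varphi$, note that $\varphi u\in\D({\cal T})$, and then average, using $\ran{\cal T}\subset\ker\ave{\cdot}$. The paper's proof is terser (it simply asserts $\varphi u\in\D({\cal T})$ and the product rule without further comment), whereas you add justification for why the Leibniz identity holds when $u$ is merely in $\D({\cal T})$; your alternative via integration by parts in the angular variable $\alpha$ is also valid but not needed here.
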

\begin{proof}
We have $\varphi u \in \D({\cal T})$ and ${\cal T} (\varphi u) = \varphi \;{\cal T} u + u \;{\cal T} \varphi $. Taking the average yields
\[
\ave{\varphi  \;{\cal T} u } + \ave{u \;{\cal T} \varphi } = \ave{{\cal T}(\varphi u )} = 0
\]
since $\ran {\cal T} \subset \ker \ave{\cdot}$.
\end{proof}

\subsection{Averaging and first order differential operators}
\label{AveDiff}
\indent

In the sequel we need to analyze the commutation properties of averaging and transport operators. For the sake of the presentation we perform our analysis in the general framework of characteristic flows associated to divergence free vector fields. We denote by $b^0$ a given smooth, divergence free vector field $b^0 : \R^m \to \R^m$ whose characteristic flow $Y = Y(s;y)$ is well defined
\begin{equation}
 \label{H00}
 \left\lbrace \begin{array}{l l}
 \frac{d}{ds}Y &= b^0(Y(s;y)),\;\;(s,y) \in \R \times \R 
^m \\
Y(0;y) &= y,\;\; y \in \R ^m.
\end{array}
 \right.
\end{equation}
We intend to apply the following general results to the gyroaverage context and therefore, without loss of generality, we will assume that the characteristic flow $Y=Y(s;y)$ is periodic {\it i.e.,} for any $y \in \R^m$ there is $T_c(y) >0$ such that 
\begin{equation}
\label{H0} Y(T_c(y);y) = y. 
\end{equation}
We may assume that $T_c(y)$ is the smallest positive period, excepted at those points $y \in \R^m$ where $b^0 (y) = 0$ when $s \to Y(s;y)$ is constant. The non periodic case is beyond the scope of this work. Nevertheless notice that it can be handled as well, the average operator associated to a non periodic flow being defined through ergodic mean (see \cite{BosTraSin} for details)
\[
\ave{u}(y) = \lim _{T \to +\infty} \frac{1}{T} \int _0 ^T u (Y(s;y))\;\mathrm{d}s,\;\;y \in \R^m.
\]
For the sake of clarity, the results formulated in the general framework are stated by using latin upper case $T, A, B, C, ...$ whereas their counterpart results, referring to the gyroaverage framework, are stated by using calligraphy upper case ${\cal T, A, B, C}, ...$.
\begin{pro}
The map  $y \to T_c(y)$ is constant along the flow $Y$. 
\end{pro}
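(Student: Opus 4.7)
The plan is to exploit the autonomy of the ODE defining $Y$, which gives the semigroup property $Y(t;Y(s;y)) = Y(t+s;y)$ for all $s,t \in \R$. Fix $y \in \R^m$ and set $z = Y(s;y)$. I want to show $T_c(z) = T_c(y)$, treating the regular case $b^0(y) \neq 0$ first; the fixed-point case is trivial since then $Y(\cdot;y)$ is the constant curve at $y$ and $z = y$.

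First I would check that $T_c(y)$ is a period of the trajectory through $z$. Using the semigroup property and then the defining relation $Y(T_c(y);y) = y$:
\[
Y(T_c(y);z) = Y(T_c(y);Y(s;y)) = Y(T_c(y)+s;y) = Y(s;Y(T_c(y);y)) = Y(s;y) = z.
\]
Since $T_c(z)$ is by definition the smallest positive period of $s \mapsto Y(s;z)$, this immediately gives $T_c(z) \le T_c(y)$.

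For the reverse inequality I would apply exactly the same argument with the roles of $y$ and $z$ swapped: since the flow is autonomous and globally defined, it is invertible with $Y(-s;z) = y$, so the identity $Y(T_c(z);y) = y$ follows in the same way and yields $T_c(y) \le T_c(z)$. Combining the two inequalities gives $T_c(Y(s;y)) = T_c(y)$ for every $s \in \R$, which is the claim.

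I do not expect a serious obstacle here: the entire argument rests on the semigroup identity for autonomous flows and on the fact that $T_c$ is defined as the \emph{minimal} positive period. The only point requiring a brief comment is the degenerate case where $b^0(y) = 0$; this is handled by the convention adopted in the text that on such equilibrium points $Y(\cdot;y)$ is constant (so $z = y$), and therefore the statement is vacuously true on the fixed-point set of the flow.
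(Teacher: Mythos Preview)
Your proof is correct and follows essentially the same approach as the paper: both arguments use the semigroup property of the autonomous flow to show that $T_c(y)$ is a period of the trajectory through $Y(s;y)$, deduce $T_c(Y(s;y)) \le T_c(y)$ from minimality, and then swap roles to obtain the reverse inequality. The handling of the degenerate case $b^0(y)=0$ is also the same.
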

\begin{proof}
Indeed, for any $y \in \R^m$ such that $b^0 (y) \neq 0$ let us consider $y_h = Y(h;y)$, with $h \in \R$. Clearly $b^0(y_h) \neq 0$ (otherwise $Y(\cdot;y) = y$ and $b^0 (y) = b^0 (y_h) = 0$) and 
\[
Y(T_c(y);y_h) = Y(T_c(y) + h;y) = Y(h;Y(T_c(y);y)) = Y(h;y) = y_h\, ,
\]
which implies $T_c(y_h) \leq T_c(y)$. Replacing now $y$ by $y_{-h}$ (which satisfies $b^0 (y_{-h}) \neq 0 $) we deduce that $T_c(y) \leq T_c(y_{-h})$ for any $h \in \R$ and therefore, after changing $h$ by $-h$ in the last inequality we obtain $T_c(y_h) \leq T_c(y) \leq T_c(y_h)$, saying that $T_c(y)$ is constant along the flow $Y$.
\end{proof}

Next, we denote by $T$ the linear operator defined by $T u = \Divy (u(y) b^0(y) )$ for any $u$ in the domain
\begin{equation*}
 \mathrm{D} (T) = \{ u \in \lty{}\;:\; \Divy (b^0 (y) u(y))  \in \lty{} \}\, ,
\end{equation*}
and thus the kernel of $T$ is given by constant functions along the characteristic flow \eqref{H00}
\begin{equation}\label{H-T}
\ker T = \{ u \in \lty{}\;:\; u (Y(s;y)) = u(y),\;s\in
\R,\;\mathrm{a.e.}\; y \in \R^m\}.
\end{equation}
\medskip
For further computations it is very useful to pick appropriate coordinates. Since we are interested in averaging along a characteristic flow (and therefore since the functions left invariant along this flow play a crucial role) we take as new coordinates a complete family of invariants $\{\psi _1, ..., \psi _{m-1}\}$ for the field $b^0$, together with a parametrization $\psi _0$ along its flow. To any invariant $\psi _i, i \in \{1,...,m-1\}$ we associate a derivation $b^i \cdot \nabla _y $ acting, with respect to the new coordinates, like the partial derivative $\partial _{\psi _i}$. Assume that the vector field $b^0$ has a complete family of smooth  independent prime integrals denoted $\psi _1 = \psi _1 (y), ..., \psi _{m-1} = \psi _{m-1} (y)$ {\it i.e.,}
\begin{equation}
\label{H1} b^0 \cdot \nabla _y  \psi _i = 0,\;\;i \in \{1,...,m-1\}, \;\;\mathrm{rank} \;\partial _y   \; ^t (\psi _1, ..., \psi _{m-1}) = m-1,\;\;y \in \R^m\,.
\end{equation}
Moreover, assume that there is a function $\psi _0 = \psi _0 (y)$, whose gradient $\nabla _y \psi _0$ is well defined a.e. $y \in \R^m$, such that 
\begin{equation}
\label{H2} b^0 \cdot \nabla _y \psi _0 = I(y) \neq 0,\;\;\mathrm{a.e.}\; y \in \R^m
\end{equation} 
for some function $I \in \ker T$. Notice that $\psi _0$, satisfying \eqref{H2}, cannot be continuous everywhere. In general, $\psi _0$ has a discontinuity point,  say $y_0$, on each characteristic and the jump $[\psi _0]$ around the discontinuity is given by
\begin{equation}\label{H3}
\lim _{s \nearrow T_c(y_0)} \psi _0 (Y(s;y_0)) - \lim _{s \searrow 0 } \psi _0 (Y(s;y_0)) = \int _0 ^{T_c (y_0)} (b^0 \dny \psi _0 )(Y(s;y_0)) \;\mathrm{d}s = T_c(y_0) I(y_0).
\end{equation}
That means $\psi _0 (y)$ is an angular coordinate. Since $T_c(\cdot), I(\cdot) \in \ker ( b^0 \dny)$ it is possible to normalize the jump. Indeed, taking $\tilde{\psi}_0 = \psi _0 /[\psi _0]$ leads to
\[
b^0 \dny \tilde{\psi}_0 = \frac{b^0 \dny \psi _0}{[\psi _0]} = \frac{I(y)}{T_c(y) I(y)} = \frac{1}{T_c(y)} \in \ker (b^0 \dny ),\;\;[\tilde{\psi}_0] = 1.
\]
Therefore we assume, without loss of generality, that the angular coordinate $\psi _0$ has the same jump on every characteristic. The following result is standard. For the sake of the completeness we detail the proof in Appendix \ref{A}.
\begin{pro}
\label{prime-integrals}
Assume that the vector field $b^0$ has a complete family of  smooth independent prime integrals \eqref{H1}. Moreover we assume that there is an angular coordinate \eqref{H2} with constant jump $S = T_c I $ on every characteristic. Then, the following two properties are satisfied
\begin{enumerate}
\item
The map $y \to (\psi _0 (y), \psi _1(y),...,\psi _{m-1} (y))$ is a bijection between $\R^m$ and $[0,S) \times D$, 
where  $D := \{(\psi _1(y),...,\psi _{m-1} (y))\;:\; y \in \R^m\}$.
\item 
For any $i \in \{1, ..., m-1\}$ there is a unique vector field $b^i$ such that 
\[
\bin \psi _j = \delta ^i _j,\;\;j \in \{0,1,...,m-1\}.
\]
\end{enumerate}
\end{pro}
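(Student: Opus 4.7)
The plan is to treat the two assertions sequentially, viewing $\Psi : y \mapsto (\psi_0(y),\psi_1(y),\ldots,\psi_{m-1}(y))$ as a global change of coordinates that flattens the flow of $b^0$ into translations along the first coordinate, with the remaining $m-1$ coordinates labeling the characteristics. The whole proof is really an exercise in packaging the geometric content of \eqref{H1}, \eqref{H2}, and the periodicity \eqref{H0} into invertibility statements.

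For the bijection in the first assertion, I would first use the hypothesis that the prime integrals are complete, together with the rank condition and the periodicity, to argue that level sets of $(\psi_1,\ldots,\psi_{m-1})$ coincide with individual orbits of $Y(\cdot;y)$. Then injectivity is clear: if $\Psi(y)=\Psi(y')$, matching the last $m-1$ coordinates places $y'=Y(s;y)$ for some $s\in[0,T_c(y))$; but along this orbit $\frac{d}{ds}\psi_0(Y(s;y))=I(y)$ is a nonzero constant (by \eqref{H2} and the fact that $I\in\ker T$), so $\psi_0$ is strictly monotone in $s$, sweeping an interval of length $S=T_c(y)I(y)$; requiring $\psi_0\in[0,S)$ then forces $s=0$. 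For surjectivity, a point $(p_0,\mathbf{p})\in[0,S)\times D$ admits, by the very definition of $D$, some $y_0$ with $\psi_i(y_0)=p_i$ for $i\geq 1$; the orbit through $y_0$ preserves these values (prime integrals), and $\psi_0(Y(\cdot;y_0))$ is continuous with slope $I(y_0)$ on $[0,T_c(y_0))$ except at one jump point, so it attains each value of $[0,S)$ exactly once, and we pick the unique $s_\star$ with $\psi_0(Y(s_\star;y_0))=p_0$.

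For the second assertion, the cleanest organization is to introduce the $m\times m$ Jacobian matrix $B(y)$ whose rows are $(\nabla_y\psi_j)^T$, $j=0,\ldots,m-1$. The requested system $b^i\dny\psi_j=\delta^i_j$ becomes $B(y)\,b^i(y)=e_i$, so existence and uniqueness of $b^i$ reduce to the invertibility of $B(y)$. The last $m-1$ rows are linearly independent by \eqref{H1}, and the first row is independent of them because $b^0$ is annihilated by the last $m-1$ rows but $(\nabla_y\psi_0)\cdot b^0=I\neq 0$ by \eqref{H2}. Hence $B(y)$ has rank $m$ almost everywhere, its columns $b^i(y)=B(y)^{-1}e_i$ are unique and well defined, and by construction $b^i\dny\psi_j=\delta^i_j$.

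The main obstacle, as usual with this kind of global change of variables, will be part (1): specifically pinning down the consequence of ``completeness'' of the prime integrals, i.e.\ that $(\psi_1,\ldots,\psi_{m-1})$ separates distinct orbits and does not merely label them locally. One uses the rank hypothesis and the implicit function theorem to obtain the local statement, then leverages periodicity to rule out two distinct orbits sharing the same label. A secondary but tractable subtlety is the single discontinuity of $\psi_0$ on each orbit: one must fix a branch so that the range is exactly $[0,S)$ and each value is attained once. Once these geometric points are handled, everything else is linear algebra on the Jacobian.
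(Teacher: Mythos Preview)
Your proposal is correct and follows essentially the same approach as the paper: for part~1 you reduce injectivity to the strict monotonicity of $\psi_0$ along an orbit (via $\frac{d}{ds}\psi_0 = I \neq 0$) after identifying two points with equal invariants as lying on a common characteristic, and for part~2 your Jacobian-matrix argument is exactly the paper's observation that $\nabla_y\psi_0 \notin \mathrm{span}\{\nabla_y\psi_1,\ldots,\nabla_y\psi_{m-1}\}$, only repackaged as $\mathrm{rank}\,B(y)=m$. The paper is equally brief (and equally implicit) about the step you flag as the main obstacle, namely that equal values of $(\psi_1,\ldots,\psi_{m-1})$ force two points onto the same orbit; this is taken as part of the ``complete family of prime integrals'' hypothesis rather than derived.
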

%%
%% see appendix
%% 
\begin{remark}
\label{PerCont} 
Any function $u = u(y)$ writes $u(y) = w(\psi _0 (y), \psi _1(y),...,\psi _{m-1} (y))$, with $w$ defined on $[0,S) \times D$. Notice that if $u$ is continuous then $\lim _{r \nearrow S} w(r,\psi _1,...,\psi _{m-1}) = w(0,\psi _1,...,\psi _{m-1})$. Indeed, for any $(\psi _1,...,\psi _{m-1}) \in D$ let us denote by $y_0$ the discontinuity point of $\psi _0$ along the characteristic associated to the invariants $(\psi _1,...,\psi _{m-1})$. We have
\begin{eqnarray}
w(0, \psi _1,...,\psi _{m-1}) &=& u(y_0)  =   \lim _{s \nearrow T_c(y_0)} u(Y(s;y_0)) \nonumber \\
& = & \lim _{s \nearrow  T_c (y_0)} w(\psi _0(Y(s;y_0)),\psi _1 (y_0),...,\psi _{m-1}(y_0)) \nonumber \\
& = & \lim _{s \nearrow T_c (y_0)} w(\psi _0 (y_0) + sI, \psi _1,...,\psi _{m-1}) \nonumber \\
& = & \lim _{r \nearrow S } w(r, \psi _1,...,\psi _{m-1}). \nonumber 
\end{eqnarray}
If $u$ is continuous, the function $w$ above extends by $S$- periodicity with respect to $\psi _0$ to a continuous function, still denoted $w$, defined on $\R \times D$.
\end{remark}
\begin{remark}
\label{PartCase} In our case \eqref{Equ6} the divergence free vector field to be considered is $b^0 :\R^4 \to \R^4$, $b^0 (x,v) = (0,0,\oc ^\perp v )$. A complete family of prime integrals for this vector field is
\[
\psi _1 = x_1,\;\;\psi _2 = x_2,\;\;\psi _3 = |v|
\]
and we pick as angular coordinate $\psi _0 (x,v) = - \alpha (v)$ where $\alpha (v)$ is the angle of $v \in \R^2 \setminus \{(0,0)\}$ {\it i.e.,}
\[
v_1 = |v| \cos \alpha (v),\;\;v_2 = |v| \sin \alpha (v),\;\;\alpha (v) \in [0,2\pi).
\]
The angular coordinate  $\psi _0 (x,v)$ is discontinuous on $\R_+ ^\star \times \{0\}$. But its gradient, which is well defined for any $v \in \R ^2 \setminus (\R_+ \times \{0\})$, can be extended by continuity on $\R^2 \setminus \{(0,0)\}$
\[
\nabla _v \alpha = - \frac{^\perp v}{|v|^2},\;\; \nxv \psi _0 = \left ( 0,0,\frac{^\perp v}{|v|^2} \right )
\]
and we have $I = \oc \;^\perp v \cdot \nabla _v  \psi _0 = \oc \in \ker {\cal T}$. On each characteristic $\psi _0$ has a jump of $S = \tc \oc = 2\pi$.

In addition, the vector fields $({b}^i)_{i \in \{1,2,3\}}$ associated to the invariants $\psi _1 = x_1, \psi _2 = x_2, \psi _3 = |v|$ and the angular coordinate $\psi _0 = - \alpha (v)$ are given by 
\[
{b}^1 = (1,0,0,0),\;\;{ b}^2 = (0,1,0,0),\;\;{ b}^3 = \left ( 0,0,\frac{v}{|v|} \right ).
\]
\end{remark}

\medskip

The vector fields $(b^i)_{i \in \{1,...,m-1\}}$ will play a major role in our analysis cf. \cite{BosGuidCent3D}. We shall establish several general results which mainly concern the commutation properties between the average and divergence operators, the inversion of $T = b^0 \cdot \nabla _y$ on zero average functions and the construction of some auxiliary vector fields we will need for our further analysis. In particular we will show that after averaging, a transport operator reduces to another transport operator and we explicitely compute its advection field components. The following eight propositions explain in detail how to achieve these goals. Observe that the derivations along the vector fields $(b^i)_{i \in \{1,...,m-1\}}$ coincide with the partial derivatives with respect to the invariants $(\psi_i)_{i \in \{1,...,m-1\}}$, see Appendix \ref{A} for details.
\begin{pro}
\label{Actions} Consider a function $u = u(y)$ and let us denote by $ w = w(\psi _0,...,\psi _{m-1}) : \R \times D \to \R$ the $S$ periodic function with respect to $\psi _0$ such that $u(y) = w(\psi _0 (y),...,\psi _{m-1} (y))$. If $u \in C^1 (\R^m)$, then $w \in C^1 (\R \times D)$ and satisfies the identities
\[
\bzn u (y) = I(y) \;\partial _{\psi _0} w(\psi _0(y),...,\psi _{m-1} (y)),\;\; \bin u (y) = \partial _{\psi _i} w(\psi _0(y),...,\psi _{m-1} (y))
\]
for any $i \in \{1,...,m-1\}$.
\end{pro}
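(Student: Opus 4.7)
The plan is to combine the chain rule with the defining duality relations $b^i \cdot \nabla_y \psi_j = \delta^i_j$ from Proposition~\ref{prime-integrals}. Since $\psi_0$ is discontinuous on a negligible set (one trajectory point per characteristic), the main work is to justify that the composition $u = w \circ \Psi$, where $\Psi(y) = (\psi_0(y),\psi_1(y),\ldots,\psi_{m-1}(y))$, makes $w$ a $C^1$ function when extended by $S$-periodicity in the first variable; once this is secured, the two identities are a one-line computation.

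First I would treat the regularity of $w$. Away from the discontinuity set of $\psi_0$, Proposition~\ref{prime-integrals} says $\Psi$ is a bijection onto $[0,S) \times D$, and since $\{\nabla_y \psi_1, \ldots, \nabla_y \psi_{m-1}\}$ has rank $m-1$ while $\nabla_y \psi_0$ is transverse to these (by \eqref{H2} combined with the duality $b^i \cdot \nabla_y \psi_j = \delta^i_j$, the Jacobian matrix of $\Psi$ is invertible), the inverse function theorem gives $\Psi^{-1} \in C^1$ locally. Then $w = u \circ \Psi^{-1} \in C^1$ on $(0,S) \times D$. The $S$-periodic extension in $\psi_0$ across the jump is $C^1$ too: near the discontinuity point $y_0$ of $\psi_0$ on a given characteristic, $u$ is $C^1$ on a neighborhood of $y_0$, and by Remark~\ref{PerCont} the values of $w$ for $\psi_0$ slightly below $S$ match the values for $\psi_0$ slightly above $0$, with matching first derivatives (obtained by applying the chain rule on either side and using continuity of $\nabla_y u$ and of $\nabla_y \psi_j$).

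Once $w \in C^1(\R \times D)$, the chain rule applied to $u(y) = w(\psi_0(y),\ldots,\psi_{m-1}(y))$ yields
\[
\nabla_y u(y) = \sum_{j=0}^{m-1} \partial_{\psi_j} w(\psi_0(y),\ldots,\psi_{m-1}(y))\, \nabla_y \psi_j(y).
\]
Taking the scalar product with $b^0(y)$ and invoking \eqref{H1}--\eqref{H2} gives the first identity, since $b^0 \cdot \nabla_y \psi_j = 0$ for $j \in \{1,\ldots,m-1\}$ and $b^0 \cdot \nabla_y \psi_0 = I(y)$. Taking the scalar product with $b^i(y)$ for $i \in \{1,\ldots,m-1\}$ and applying the duality relations $b^i \cdot \nabla_y \psi_j = \delta^i_j$ from the second item of Proposition~\ref{prime-integrals} immediately gives the second identity.

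The only delicate step is the regularity of $w$ at the jump, i.e.\ checking that the naive coordinate change survives the discontinuity of $\psi_0$. In the gyroaverage application of Remark~\ref{PartCase}, this corresponds to the angular coordinate being singular along the half-axis $\R_+^\star \times \{0\}$ in the $v$ variable; however, $\nabla_v \psi_0 = -\, ^\perp v/|v|^2$ extends continuously to $\R^2 \setminus \{0\}$, and the periodicity identification makes $w$ smooth across $\{\psi_0 = 0\} \sim \{\psi_0 = S\}$. After that, the proposition is essentially a bookkeeping statement expressing that, in coordinates $(\psi_0,\ldots,\psi_{m-1})$, the derivation $b^i \cdot \nabla_y$ acts as $\partial_{\psi_i}$ and $b^0 \cdot \nabla_y$ acts as $I\,\partial_{\psi_0}$.
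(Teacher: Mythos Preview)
Your proposal is correct and follows essentially the same approach as the paper: chain rule plus the duality relations $b^i\cdot\nabla_y\psi_j=\delta^i_j$, with the only nontrivial point being the $C^1$ regularity of $w$ across the jump of $\psi_0$. The paper's proof differs only stylistically in how it handles that jump: rather than invoking the inverse function theorem and arguing that the periodic extension has matching one-sided derivatives, it parametrizes along the flow, writing $b^0\cdot\nabla_y u(y)=\tfrac{d}{ds}u(Y(s;y_0))$, and then computes the right and left $\psi_0$-derivatives of $w$ at the discontinuity point explicitly (using $\psi_0(Y(s;y_0))=\psi_0(y_0)+sI$ and the $S$-periodicity) to see that they coincide. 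Your inverse-function-theorem packaging is cleaner but tacitly uses that $\nabla_y\psi_0$ extends continuously across the discontinuity set, which the paper only assumes a.e.\ in \eqref{H2}; the flow-parametrization argument sidesteps this by never writing $\nabla_y\psi_0$ at $y_0$.
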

%%
%% se Appendix A
%%
In the sequel, for any two smooth vector fields $b(y)$ and $c(y)$, the notation $[b,c]$ stands for the vector field $(b \cdot \nabla _y )c - (c \cdot \nabla _y )b$, which is the vector field of the transport operator $b\cdot \nabla _y (c \cdot \nabla _y ) - c \cdot \nabla _y (b \cdot \nabla _y )$
\[
b\cdot \nabla _y (c \cdot \nabla _y ) - c \cdot \nabla _y (b \cdot \nabla _y ) = [b,c] \cdot \nabla _y.
\]
The commutators between the fields $(b^i)_{i \in \{0,...,m-1\}}$ come easily by Schwarz's theorem on mixed partial derivatives \cite{Rud76}.
\begin{pro}
\label{ComFields} With the previous notations we have
\[
[b^i, b^0] = \frac{\bin I}{I} b^0,\;\;i \in \{0,...,m-1\}
\]
and $[b^i,b^j] = 0, i,j \in \{1,...,m-1\}$, with $I = \bzn \psi _0$.
\end{pro}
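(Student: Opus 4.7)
The plan is to change variables from $y$ to the adapted coordinates $(\psi_0, \psi_1, \ldots, \psi_{m-1})$ provided by Proposition \ref{prime-integrals} and exploit the diagonal representation of the vector fields given by Proposition \ref{Actions}. In those coordinates $b^0\dny$ acts as multiplication by $I$ followed by $\partial_{\psi_0}$, while $\bin$ acts simply as $\partial_{\psi_i}$ for $i\in\{1,\ldots,m-1\}$. Since $I\in\ker T$ means $\bzn I=0$, the function $I$ depends only on $\psi_1,\ldots,\psi_{m-1}$, so $\bin I=\partial_{\psi_i}\tilde I$ with $\tilde I(\psi_1,\ldots,\psi_{m-1})=I(y)$.

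Next, I would test both sides against an arbitrary smooth $u(y)=w(\psi_0(y),\ldots,\psi_{m-1}(y))$. For $i\in\{1,\ldots,m-1\}$ a direct computation gives
\[
(\bin)(\bzn) u - (\bzn)(\bin) u = \partial_{\psi_i}\bigl(\tilde I\,\partial_{\psi_0} w\bigr) - \tilde I\,\partial_{\psi_0}\partial_{\psi_i} w = (\partial_{\psi_i}\tilde I)\,\partial_{\psi_0} w,
\]
where the mixed second derivatives cancel by Schwarz's theorem. The right-hand side is exactly $\tfrac{\bin I}{I}\,(\bzn)u$. An analogous computation for $i,j\in\{1,\ldots,m-1\}$ reduces $[\bin,\bjn]u$ to $\partial_{\psi_i}\partial_{\psi_j} w-\partial_{\psi_j}\partial_{\psi_i} w=0$. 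The case $i=0$ is trivial since both $[b^0,b^0]$ and $\bzn I$ vanish.

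To upgrade the equalities of differential operators to the corresponding equalities of vector fields, I would apply them to the coordinate functions $u(y)=y_k$, which extracts the $k$-th component and identifies $[b^i,b^0]$ with $\tfrac{\bin I}{I}b^0$ and $[b^i,b^j]$ with $0$ componentwise.

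The only genuine subtlety is the measure-zero singularity of the angular coordinate $\psi_0$ at its jump locus (cf.\ Remark \ref{PerCont} and the example of Remark \ref{PartCase}). This is harmless: the commutator identity is an identity between smooth vector fields on $\R^m$, and the coordinate computation above is valid on the open set where $\psi_0$ is smooth, which is dense; by continuity of $b^0,b^1,\ldots,b^{m-1}$ and of $I$, the identity extends to all points where $I\neq 0$. Thus the hard part is purely bookkeeping around the angular coordinate, while the substance of the statement is Schwarz's theorem applied in the invariants chart.
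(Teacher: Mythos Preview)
Your proof is correct and follows essentially the same route as the paper: both pass to the adapted coordinates $(\psi_0,\ldots,\psi_{m-1})$ via Proposition~\ref{Actions}, compute the commutators as differences of mixed partials (with the only surviving term coming from the $\psi_i$-derivative hitting the factor $I$), and invoke Schwarz's theorem. Your additional remarks on extracting vector field components via $u=y_k$ and on extending across the jump locus of $\psi_0$ by continuity are sound refinements that the paper leaves implicit.
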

\begin{proof}
The first statement is obvious when $i = 0$. For any smooth function $u = u(y)$ and $i \in \{1,...,m-1\}$ we have, after the change of coordinate $u(y) = w(\psi (y)), \psi = (\psi _0,...,\psi _{m-1})$
\begin{eqnarray}
[b^i,b^0]\dny u & = & (\bin )(\bzn u) - (\bzn) (\bin u) \nonumber \\
& = & (\bin ) \left ( I \frac{\partial w}{\partial {\psi _0}} \circ \psi \right ) - (\bzn ) \left (  \frac{\partial w}{\partial {\psi _i}} \circ \psi \right ) \nonumber \\
& = & \frac{\bin I}{I}\; \bzn u.\nonumber
\end{eqnarray}
Similarly, for any $i,j \in \{1,...,m-1\}$ we have
\begin{eqnarray}
[b^i,b^j] \cdot \nabla _y u  & = & (\bin) (\bjn u ) - (\bjn ) (\bin u) \nonumber \\
& = &  \left ( \frac{\partial }{\partial {\psi _i}} \left ( \frac{\partial w}{\partial {\psi _j}} \right ) \right ) \circ \psi - \left ( \frac{\partial }{\partial {\psi _j}} \left ( \frac{\partial w}{\partial {\psi _i}} \right ) \right ) \circ \psi = 0. \nonumber 
\end{eqnarray}
\end{proof}
\begin{coro}
\label{C} The operators $(\bin)_{i \in \{0,...,m-1\}}$ leave invariant $\ker \bzn$.
\end{coro}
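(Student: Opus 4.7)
The claim is that for any $u \in \ker \bzn$ and any $i \in \{0,\dots,m-1\}$, we again have $\bin u \in \ker \bzn$. The natural approach is to compute $\bzn(\bin u)$ by swapping the order of the two derivations and invoking Proposition \ref{ComFields} to control the commutator.

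Concretely, I would write, at least formally,
\[
\bzn (\bin u) = \bin (\bzn u) - [b^i,b^0]\dny u.
\]
The first term on the right vanishes because $u \in \ker \bzn$. For the second, Proposition \ref{ComFields} gives $[b^i,b^0] = \frac{\bin I}{I}\,b^0$, hence
\[
[b^i,b^0]\dny u \;=\; \frac{\bin I}{I}\,\bzn u \;=\; 0.
\]
Combining, $\bzn(\bin u)=0$, which is exactly the claim. The case $i=0$ is trivial ($[b^0,b^0]=0$), so only $i \in \{1,\dots,m-1\}$ needs this argument.

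The only real obstacle is a matter of regularity: Proposition \ref{ComFields} is stated for smooth functions, whereas $\ker \bzn$ is an $L^2$ subspace. For smooth $u$ the computation above is justified pointwise and there is nothing more to do. For a general $u \in \D(T) \cap \ker \bzn$, I would either interpret $\bin u$ in the distributional sense and use that $b^0$ is divergence free so $\bzn$ is skew-adjoint (thus $\bzn (\bin u) = 0$ as a distribution, tested against $C^1_c$ functions via the commutator identity), or regularize by writing $u$ in the invariant coordinates $(\psi_0,\dots,\psi_{m-1})$ of Proposition \ref{prime-integrals} — where $u$ is independent of $\psi_0$ — convolving in the $\psi_i$'s, applying the smooth argument, and passing to the limit. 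Either way the commutator identity is the whole content of the proof.
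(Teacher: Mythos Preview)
Your proof is correct and follows the same commutator argument as the paper: apply Proposition~\ref{ComFields} to write $(\bin)(\bzn u) - (\bzn)(\bin u) = \frac{\bin I}{I}\,\bzn u$, then observe both the left-hand side's first term and the right-hand side vanish for $u \in \ker(\bzn)$. The paper does not address the regularity issue you raise and simply works with $u \in \ker(\bzn) \cap \D(\bin)$, treating the commutator identity as valid in that context.
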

\begin{proof}
By Proposition \ref{ComFields} we have for any $u \in \ker (\bzn) \cap \D ( \bin) $
\[
(\bin) (\bzn u) - (\bzn) (\bin u) = \frac{\bin I}{I} (\bzn u)
\]
and therefore $(\bzn) (\bin u ) = 0$.
\end{proof}
We establish now some properties regarding the divergences of the fields $(b^i)_{i \in \{1,...,m-1\}}$.
\begin{pro}
\label{FieldsDiv} The matrix $(\bin (\Divy b^j))_{(i,j) \in \{0,...,m-1\}}$ is symmetric. In particular, for any $i \in \{1,...,m-1\}$ the divergence of $b^i$ is constant along the flow of $b^0$.
\end{pro}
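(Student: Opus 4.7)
The plan is to rely on the classical Euclidean identity
\[
\Divy [X,Y] = X \cdot \nabla _y (\Divy Y) - Y \cdot \nabla _y (\Divy X),
\]
valid for any two smooth vector fields $X,Y$ on $\R^m$ (the cross terms $\partial _k X^j \partial _j Y^k$ coming from the product rule cancel after relabeling indices). Once this is in hand, the symmetry of the matrix $M_{ij} := \bin (\Divy b^j)$ is equivalent to $\Divy [b^i,b^j]=0$ for all $i,j \in \{0,\ldots,m-1\}$, so the whole statement reduces to a bracket calculation via Proposition \ref{ComFields}.

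For indices $i,j \in \{1,\ldots,m-1\}$ the bracket $[b^i,b^j]$ is zero by Proposition \ref{ComFields}, hence $M_{ij} = M_{ji}$ on that sub-block with no further work. For the row and column of index $0$, I would use that $b^0$ is divergence free. This immediately gives $M_{i0} = \bin (\Divy b^0) = 0$ for every $i$, so symmetry is equivalent to showing $M_{0i} = \bzn (\Divy b^i) = 0$, which is precisely the second assertion of the proposition. Thus both claims collapse to the single identity $\bzn (\Divy b^i)=0$ for $i\in\{1,\ldots,m-1\}$.

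To establish it, I apply the divergence identity to the commutator provided by Proposition \ref{ComFields}, namely $[b^i,b^0] = \frac{\bin I}{I}\,b^0$. Using once more $\Divy b^0 = 0$, this yields
\[
-\bzn (\Divy b^i) \;=\; \Divy \!\left(\tfrac{\bin I}{I}\,b^0\right) \;=\; \bzn\!\left(\tfrac{\bin I}{I}\right).
\]
The remaining task is to verify that the right-hand side vanishes. Expanding the quotient rule, one factor is $\bzn I$, which is zero since $I \in \ker T$ by hypothesis \eqref{H2}. The other piece is $\bzn(\bin I)$, and here I rewrite
\[
\bzn (\bin I) = [b^0,b^i]\cdot \nabla _y I + \bin (\bzn I),
\]
where the second term vanishes ($\bzn I = 0$) and the first is proportional to $\bzn I$ through Proposition \ref{ComFields}, hence also vanishes. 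This proves both statements simultaneously.

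The argument is essentially a three-line commutator computation; the only point that requires vigilance is the systematic use of $I \in \ker T$, which is what makes every $1/I$-term and every cross term disappear. No change of coordinates and no appeal to the angular parametrization $\psi_0$ is needed — the proposition is a purely algebraic consequence of the Jacobi-type relations of Proposition \ref{ComFields} and the divergence-freeness of $b^0$.
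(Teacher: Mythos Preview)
Your proof is correct and follows essentially the same approach as the paper: both compute $\Divy[b^i,b^j]$ via the identity $\Divy[X,Y]=X\cdot\nabla_y(\Divy Y)-Y\cdot\nabla_y(\Divy X)$, use Proposition~\ref{ComFields} to write each bracket as a multiple of $b^0$, and then observe that this multiple has zero divergence because the coefficient $\frac{\bin I}{I}$ lies in $\ker(\bzn)$. The only cosmetic difference is that the paper treats all pairs $(i,j)$ at once by writing $[b^i,b^j]=\lambda^{ij}b^0$ with $\lambda^{ij}\in\ker(\bzn)$ and invoking Corollary~\ref{C}, whereas you split into the sub-block $i,j\geq 1$ and the border row/column and re-derive $\bzn\!\left(\frac{\bin I}{I}\right)=0$ directly from the commutator relation --- which is exactly the content of Corollary~\ref{C} applied to $I$.
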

\begin{proof}
By Proposition \ref{ComFields} we know that
\[
[b^i, b^j] = \lambda ^{ij} b^0,\;\;\bzn \lambda ^{ij} = 0,\;\;i,j \in \zm
\]
since $I$ and $\bin I$ are constant along the flow of $b^0$ cf. Corollary \ref{C}. Therefore we have
\[
0 = \Divy (\lambda ^{ij} b^0) = \Divy [b^i,b^j] = (\bin) (\Divy b^j ) - (\bjn) (\Divy b^i ).
\]
In particular, taking $j = 0$ and $i \in \om$ one gets $\bzn (\Divy b^i) = 0$, since $b^0$ is divergence free.
\end{proof}
We investigate now the commutation properties between the operators $(\bin )_{i \in \zm}$ and the average operator. Since the left hand side of the Boltzmann equation \eqref{Equ6} can be written into conservation form, it is worth analyzing how to average the divergence of a vector field.
\begin{pro}
\label{DivAve} With the previous notations, we have for any smooth vector field  $\xi (y) = (\xi _1 (y), ...,\xi _m (y))$
\[
\ave{\Divy \xi } = \Divy \left \{\sumio \ave{\xi \cdot \nabla _y \psi _i } b^i \right \} = \Divy \left \{\frac{\ave{\xi \dny \psi _0}}{\bzn \psi _0} b^0  + \sumio \ave{\xi \cdot \nabla _y \psi _i } b^i \right \}.
\]
\end{pro}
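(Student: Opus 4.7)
My plan is to expand $\xi$ in the basis $\{b^0, b^1, \ldots, b^{m-1}\}$ provided by Proposition \ref{prime-integrals} and then apply the average term by term. Using the duality $b^i \dny \psi_j = \delta^i_j$ for $i \in \om$, $j \in \zm$, together with $b^0 \dny \psi_0 = I$ and $b^0 \dny \psi_i = 0$ for $i \in \om$, one has the pointwise decomposition
\[
\xi = \frac{\xi \dny \psi_0}{I}\, b^0 + \sumio (\xi \dny \psi_i)\, b^i.
\]
Since $\Divy b^0 = 0$, the $b^0$ summand contributes $\Divy\bigl\{\frac{\xi \dny \psi_0}{I}\, b^0\bigr\} = T\bigl(\frac{\xi \dny \psi_0}{I}\bigr)$, which lies in $\ran T \subset \ker \ave{\cdot}$ and therefore averages to zero. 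Hence the identity reduces to proving
\[
\ave{\Divy ((\xi \dny \psi_i)\, b^i)} = \Divy (\ave{\xi \dny \psi_i}\, b^i),\quad i \in \om,
\]
after which one sums over $i$ and pulls $\Divy$ outside.

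The core step is therefore a per-index commutation $\ave{\Divy(c_i b^i)} = \Divy(\ave{c_i}\, b^i)$ with $c_i = \xi \dny \psi_i$. Splitting $\Divy(c_i b^i) = \bin c_i + c_i \Divy b^i$, I would handle the product term via Proposition \ref{FieldsDiv}, which gives $\Divy b^i \in \ker T$, combined with the orthogonal projection characterization of $\ave{\cdot}$ (Proposition \ref{AverageProp}): for any $g \in \ker T$ and any test $\varphi \in \ker T$, $g\varphi \in \ker T$, so
\[
\intx{\ave{c_i g}\varphi} = \intx{c_i\, g\varphi} = \intx{\ave{c_i}\, g\varphi},
\]
yielding $\ave{c_i \Divy b^i} = \ave{c_i}\, \Divy b^i$. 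For the derivative term, I would switch to coordinates $(\psi_0,\ldots,\psi_{m-1})$ where $\bin$ acts as $\partial_{\psi_i}$ (Proposition \ref{Actions}) and $\ave{\cdot}$ is integration over the $\psi_0$-period $[0,S)$; exchanging the derivative in $\psi_i$ with the integral in $\psi_0$ yields $\ave{\bin c_i} = \bin \ave{c_i}$. Reassembling $\Divy(\ave{c_i} b^i) = \bin \ave{c_i} + \ave{c_i} \Divy b^i$ closes this step.

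The equivalence of the two forms displayed on the right of the Proposition then comes for free: their difference equals $\Divy\bigl\{\frac{\ave{\xi \dny \psi_0}}{I}\, b^0\bigr\} = T\bigl(\frac{\ave{\xi \dny \psi_0}}{I}\bigr)$, and this vanishes because the quotient is a product of $\ker T$-functions ($\ave{\xi \dny \psi_0}$ by construction of the average, and $1/I$ by hypothesis \eqref{H2}), hence is itself in $\ker T$. The main obstacle I expect is justifying the exchange of $\partial_{\psi_i}$ and the $\psi_0$-integral that underlies the commutation $\ave{\bin c_i} = \bin \ave{c_i}$; this requires sufficient regularity of $\xi$ and of the coordinate map, but at the formal level the decomposition in the dual basis together with Propositions \ref{AverageProp}, \ref{FieldsDiv}, and \ref{Actions} reduces the entire argument to bookkeeping.
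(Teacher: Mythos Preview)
Your argument is correct but proceeds differently from the paper. Both proofs start from the same decomposition $\xi = \frac{\xi\dny\psi_0}{I}\,b^0 + \sumio(\xi\dny\psi_i)\,b^i$, and both dispose of the $b^0$-piece in the same way. After that the routes diverge: the paper establishes the first equality by duality, first checking that both sides lie in $\ker(\bzn)$ and then testing against $\varphi\in\ker(\bzn)$ via integration by parts on $\R^m$, using $\bin\varphi\in\ker(\bzn)$ to replace $\xi\dny\psi_i$ by its average under the integral sign. You instead prove the per-index commutation $\ave{\Divy(c_i b^i)} = \Divy(\ave{c_i}\,b^i)$ directly, splitting it into $\ave{\bin c_i} = \bin\ave{c_i}$ (handled in the $(\psi_0,\ldots,\psi_{m-1})$ chart by exchanging $\partial_{\psi_i}$ with the $\psi_0$-integral) and $\ave{c_i\,\Divy b^i} = \ave{c_i}\,\Divy b^i$ (from $\Divy b^i\in\ker T$). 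In effect you are proving Corollary~\ref{InvFields} first and deducing Proposition~\ref{DivAve} from it, whereas the paper does the opposite. Your route is more hands-on and makes the mechanism transparent, at the cost of needing to justify the exchange of $\partial_{\psi_i}$ with the periodic average (which you correctly flag); the paper's duality argument absorbs that step into a global integration by parts and so sidesteps the issue entirely. One minor remark: your verification that $\ave{c_i g} = \ave{c_i}\,g$ for $g\in\ker T$ via testing is fine but heavier than necessary, since this follows immediately from the pointwise definition of $\ave{\cdot}$ as integration along the flow, along which $g$ is constant.
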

\begin{proof}
Notice that $\Divy \left \{\frac{\ave{\xi \dny \psi _0}}{\bzn \psi _0} b^0\right \} = 0$ and therefore it is enough to prove only the first equality. Obviously, the left hand side of the first equality  is constant along the flow of $b^0$. The same happens for the right hand side since
\[
\Divy\left \{\sumio \ave{\xi \dny \psi _i} b^i  \right \} = \sumio{\ave{\xi \dny \psi _i}\Divy b^i} + \sumio{\bin \ave{\xi \dny \psi _i }}
\]
and we know that $\Divy b^i \in \ker(\bzn)$ and that $(\bin)_{i \in \om}$ leave invariant $\ker ( \bzn)$. Therefore, in order to prove the first equality, it is sufficient to test against smooth functions $\varphi \in \ker ( \bzn )$. By the definition of the fields $(b^i)_{i \in \om}$ we have
\[
\xi = \frac{\xi \dny \psi _0}{I} b^0 + \sumio (\xi \dny \psi _i ) b^i
\]
and thus for any smooth function $\varphi \in \ker (\bzn)$ we obtain
\[
\xi \dny \varphi = \sumio (\xi \dny \psi _i) (\bin \varphi).
\]
After integration by parts one gets (by taking into account that $\varphi, \bin \varphi \in \ker (\bzn)$)
\begin{eqnarray}
\inty{\varphi (y ) \ave{\Divy \xi }} & = & \inty{\varphi (y ) \Divy \xi } \nonumber \\
& = & - \inty{\xi \dny \varphi}\nonumber \\
& = & - \sumio \inty{(\xi \dny \psi _i) (\bin \varphi )} \nonumber \\
& = & - \sumio \inty{\ave{\xi \dny \psi _i} (\bin \varphi )} \nonumber \\
& = & \inty{\varphi (y) \Divy \left \{ \sumio \ave{\xi \dny \psi _i} b^i\right \}}\nonumber
\end{eqnarray}
saying that 
\[
\ave{\Divy \xi } = \Divy \left \{ \sumio \ave{\xi \dny \psi _i} b^i\right \}.
\]
\end{proof}
\begin{coro}
\label{InvFields} Assume that the fields $(b^i)_{i \in \om}$ are smooth and have bounded divergence. Then the operators $(\bin)_{i \in \zm}$ are commuting with the average operator.
\end{coro}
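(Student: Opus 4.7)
The plan is to exploit Proposition \ref{DivAve} together with the orthogonal projection characterization of the average. The case $i=0$ is immediate: $\ave{u}\in\ker T$ gives $\bzn\ave{u}=0$, while $\ave{\bzn u} = \ave{\Divy(u b^0)}=0$ since $b^0$ is divergence free, so $\bzn u = Tu \in \ran T\subset \ker\ave{\cdot}$. For $i\in\om$ the strategy is to compute $\ave{\Divy(u b^i)}$ in two different ways and match them.

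On one hand, Proposition \ref{DivAve} applied to the vector field $\xi = u b^i$ gives, using $b^i\dny \psi_j=\delta^i_j$ from Proposition \ref{prime-integrals}, that only the $j=i$ term survives in the sum:
\[
\ave{\Divy(u b^i)} = \Divy(\ave{u}\,b^i) = \ave{u}\,\Divy b^i + \bin\ave{u}.
\]
On the other hand, expanding the divergence directly yields $\Divy(u b^i) = \bin u + u\,\Divy b^i$. The key subsidiary observation is that $\Divy b^i\in\ker T$ by Proposition \ref{FieldsDiv}, hence is constant along the flow $Y(\cdot;y)$ and factors out of the integral mean defining $\ave{\cdot}$. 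Therefore $\ave{u\,\Divy b^i} = \Divy b^i\cdot \ave{u}$, and averaging the direct expansion gives
\[
\ave{\Divy(u b^i)} = \ave{\bin u} + \ave{u}\,\Divy b^i.
\]
Comparing the two expressions and cancelling the common term yields $\bin\ave{u} = \ave{\bin u}$, as desired.

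The only delicate point is to keep every manipulation in the proper $L^2$ framework: this is where the standing hypothesis that $\Divy b^i$ is bounded and $b^i$ is smooth enters, guaranteeing that $u b^i$ is an admissible vector field for Proposition \ref{DivAve} and that $u\,\Divy b^i\in\lty$ whenever $u\in\lty$. Beyond that book-keeping, the substantive content reduces to Proposition \ref{DivAve} and the scalar factorization of $\ker T$-valued factors through $\ave{\cdot}$, so no obstacle of a conceptual nature arises.
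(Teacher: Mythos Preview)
Your proof is correct and follows essentially the same approach as the paper: both apply Proposition~\ref{DivAve} to the field $u b^i$ and then use $\Divy b^i \in \ker(\bzn)$ (from Proposition~\ref{FieldsDiv}) to conclude. You have merely made explicit the cancellation step that the paper summarizes as ``our conclusion follows easily.''
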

\begin{proof}
Obviously $\bzn $ and $\ave{\cdot}$ are commuting, since for any function $u \in \D(\bzn)$ we have
\[
\ave{\bzn u } = \bzn \ave{u} = 0.
\]
For any $i \in \om$ and function $u \in \D(\bin)$ we obtain thanks to Proposition \ref{DivAve} applied to the field $u b^i$
\[
\ave{\Divy (u b^i)} = \Divy ( \ave{u}b^i)
\]
and our conclusion follows easily, since we know that $\Divy b^i \in \ker (\bzn)$.
\end{proof}
\begin{remark}
\label{CasPartDiv} In the particular case of the vector fields $b^0 = (0,0,\oc \;^\perp v)$, $b^1 (1,0,0,0)$, $b^2 = (0,1,0,0)$, $b^3 = (0,0,v/|v|)$ we obtain for any smooth vector field $\xi = (\xi _x (x,v), \xi _v (x,v))$ the formulae, thanks to the identity $\Divv \{v/|v|^2\} = 0$
\[
\ave{\Divxv \xi } = \Divx \ave{\xi _x} + \frac{v}{|v|^2}\cdot \nabla _v \ave{\xi _v \cdot v},\;\;\ave{\Divx \xi _x} = \ave{\Divxv (\xi _x, 0)} = \Divx \ave{\xi _x}.
\]
\end{remark}
The previous result guarantees that a transport operator reduces, after average, to another transport operator and allows us to keep the model into conservative form cf. \cite{BosGuidCent3D}. For the sake of transparency we detail this computation in the following proposition.
\begin{pro}
\label{Step1} Consider $a = a(y)$ a smooth divergence free vector field in $\R^m$. Then there is a divergence free vector field $A = A(y)$ such that for any smooth function $u \in \ker (\bzn) \cup \{\psi _0\}$
\begin{equation}
\label{Equ35} \ave{\Divy (ua) } = \Divy (uA).
\end{equation}
\end{pro}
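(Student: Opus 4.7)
The candidate advection field is constructed by applying Proposition \ref{DivAve} to the ``test'' vector field $a$ itself. Set
\[
A(y) = \frac{\ave{a \cdot \nabla _y \psi _0}}{I(y)}\, b^0 (y) + \sumio \ave{a \cdot \nabla _y \psi _i }\, b^i (y).
\]
I first check that $A$ is divergence free. Indeed, applying Proposition \ref{DivAve} with $\xi = a$ gives
\[
\Divy A = \ave{\Divy a} = 0,
\]
since $a$ is divergence free by hypothesis.

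Next I verify \eqref{Equ35} for $u \in \ker (\bzn)$. Apply Proposition \ref{DivAve} to $\xi = ua$:
\[
\ave{\Divy (ua)} = \Divy \left \{ \frac{\ave{ua \cdot \nabla _y \psi _0}}{I} b^0 + \sumio \ave{ua \cdot \nabla _y \psi _i} b^i \right \}.
\]
Because $u$ is constant along the flow of $b^0$, it can be pulled out of every average, and the bracketed expression equals $uA$. Hence $\ave{\Divy (ua)} = \Divy (uA)$, as required.

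It remains to treat $u = \psi _0$, which is the slightly more delicate point because $\psi_0 \notin \ker (\bzn)$ (in fact $\bzn \psi _0 = I$). Here I argue directly rather than via Proposition \ref{DivAve}, exploiting the divergence free conditions on both $a$ and $A$:
\[
\Divy (\psi _0 \, a) = a \cdot \nabla _y \psi _0, \qquad \Divy (\psi _0 \, A) = A \cdot \nabla _y \psi _0.
\]
Taking the average on the left and using the defining properties $\bin \psi _0 = \delta ^i _0 / \dots $ provided by Proposition \ref{prime-integrals} (namely $\bzn \psi _0 = I$ and $\bin \psi _0 = 0$ for $i \in \om$) on the right, I compute
\[
A \cdot \nabla _y \psi _0 = \frac{\ave{a \cdot \nabla _y \psi _0}}{I}\, \bzn \psi _0 + \sumio \ave{a \cdot \nabla _y \psi _i }\, \bin \psi _0 = \ave{a \cdot \nabla _y \psi _0} = \ave{\Divy (\psi _0 a)},
\]
which closes \eqref{Equ35} for $u = \psi _0$. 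The main obstacle was precisely the case $u = \psi _0$: one cannot commute $\psi _0$ with $\ave{\cdot}$, so the verification rests on the explicit action of the fields $b^i$ on the invariants and the angular coordinate.
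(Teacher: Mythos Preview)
Your proof is correct and follows essentially the same route as the paper: define $A$ via the coefficients $\ave{a\cdot\nabla_y\psi_i}$, obtain $\Divy A=0$ from Proposition~\ref{DivAve} with $u=1$, pull $u\in\ker(\bzn)$ out of the averages, and treat $u=\psi_0$ directly via $\Divy(\psi_0 a)=a\cdot\nabla_y\psi_0$ and $A\cdot\nabla_y\psi_0=\ave{a\cdot\nabla_y\psi_0}$. The only cosmetic point is the fragment ``$\bin\psi_0=\delta^i_0/\dots$'', which you should clean up; the parenthetical clarification that follows it is what you actually use.
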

\begin{proof}
We apply Proposition \ref{DivAve} to the field $y \to u(y) a(y)$
\[
\ave{\Divy ( ua)} = \Divy \left \{ \frac{\ave{ua \dny \psi _0}}{I} b^0 + \sumio \ave{ua \dny \psi _i } b^i   \right \}.
\]
If $u \in \ker (\bzn)$ then
\begin{equation}
\label{Equ36} \ave{\Divy ( ua)} = \Divy \left \{ u \frac{\ave{a \dny \psi _0}}{I} b^0 + \sumio u \ave{a \dny \psi _i } b^i   \right \}.
\end{equation}
In particular for $u = 1$ one gets
\begin{equation}
\label{Equ37} 0 = \ave{\Divy ( a)} = \Divy \left \{  \frac{\ave{a \dny \psi _0}}{I} b^0 + \sumio  \ave{a \dny \psi _i } b^i   \right \}.
\end{equation}
Hence, we define the field $A$ by
\begin{equation}\label{fieldA}
 A = \frac{\ave{a \dny \psi _0}}{I} b^0 + \sumio  \ave{a \dny \psi _i } b^i  
\end{equation}
so that $A$ is divergence free and the equation \eqref{Equ36} becomes
\[
\ave{\Divy ( ua)} = \Divy (uA),\;\;u \in \ker (\bzn).
\]
It remains to check that the same equality holds true for the angular coordinate $\psi _0$. Indeed, we have
\[
\ave{\Divy ( \psi _0 a)} =\ave{a \dny \psi _0} = A\dny \psi _0 = \Divy (\psi _0 A). 
\]
Actually the vector field $A$ constructed above is the only vector field satisfying \eqref{Equ35}, since necessarily  satisfies $A\dny \psi _i = \ave{a \dny \psi _i}$ for any $i \in \zm$.
\end{proof}
\begin{remark}
\label{CoordaA} If $(\alpha _0, ..., \alpha _{m-1})$ are the coordinates of $a$ with respect to the basis $\{b^0,...,b^{m-1}\}$, then $(\ave{\alpha _0}, ..., \ave{\alpha _{m-1}})$ are the coordinates of $A$ with respect to the same basis
\[
a = \sumiz \alpha _i b^i,\;\;A = \sumiz \ave{\alpha _i} b^i.
\]
\end{remark}
\begin{remark}
\label{CasPartaA} In the particular case of the vector fields $b^0 = (0,0,\oc \;^\perp v)$, $b^1 (1,0,0,0)$, $b^2 = (0,1,0,0)$, $b^3 = (0,0,v/|v|)$, $a = (v,q/m E)$ we obtain the coefficients
\[
\alpha _0 = \frac{qE}{m\oc}\cdot \frac{^\perp v}{|v|^2},\;\;\alpha _1 = v_1,\;\;\alpha _2 = v_2,\;\;\alpha _3 = \frac{qE}{m} \cdot \frac{v}{|v|}.
\]
It is easily seen that $\ave{\alpha _i} = 0, i \in \{0,...,3\}$ and thus ${\cal A} = \sum _{i = 0} ^3 \ave{\alpha _i} b^i = 0$.
\end{remark}
\bigskip
Notice that for any $i \in \zm$ the function $\alpha _i - \ave{\alpha _i}$ has zero average, and thus, by Proposition \ref{TransportProp}, there is a unique $\beta _i $ satisfying $\bzn \beta _i = \alpha _i - \ave{\alpha _i}$, with $\ave{\beta _i } = 0$. Therefore, we denote by $B$ the vector field
\begin{equation}
\label{fieldB}
 B = \sumiz \beta _i b^i
\end{equation}
whose characterization makes the object of the next proposition.
\medskip
\begin{pro}
\label{Step2} Consider $a = a(y)$ a smooth, divergence free vector field in $\R^m$. Let $A$ and $B$ be the vector fields defined in \eqref{fieldA} and  \eqref{fieldB} respectively. \\
i) Then for any smooth function $u \in \ker (\bzn) \cup \{\psi _0\}$ we have
\begin{equation}
\label{Equ39} \Divy \{(B \dny u ) b^0   \} = \Divy \{u (a-A)  \}
\end{equation}
and $\Divy (IB) = 0$.\\
ii) Let $\tilde{B} = B + \lambda _0 b^0$ where $\bzn \lambda _0 = \frac{B \dny I}{I}$, $\ave{\lambda _0} = 0$. Then for any smooth function $u \in \ker (\bzn)$ we have
\begin{equation}
\label{Equ39Bis}
\Divy \{(\tilde{B} \dny u ) b^0   \} = \Divy \{u (a-A)  \},\;\;\Divy \tilde{B} = 0.
\end{equation}
\end{pro}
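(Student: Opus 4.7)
The plan is to work in coordinates with respect to the basis $\{b^0,\dots,b^{m-1}\}$. Writing $a = \sum_{i=0}^{m-1} \alpha_i b^i$, $A = \sum_{i=0}^{m-1} \ave{\alpha_i} b^i$, and $B = \sum_{i=0}^{m-1} \beta_i b^i$ with $\bzn \beta_i = \alpha_i - \ave{\alpha_i}$ and $\ave{\beta_i} = 0$, the three ingredients that will do all the work are the commutator formula $[b^i,b^0] = \frac{\bin I}{I} b^0$ of Proposition \ref{ComFields}, the stability of $\ker \bzn$ under $\bin$ (Corollary \ref{C}), and the commutation $\ave{\bin \cdot} = \bin \ave{\cdot}$ (Corollary \ref{InvFields}).

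For the first identity of (i), since $b^0$ and $A$ are divergence-free (the latter by Proposition \ref{Step1}) while $\Divy a = 0$ by hypothesis, both sides collapse to $\bzn(B\dny u) = (a-A)\dny u$. Expanding $B\dny u = \sum_i \beta_i (\bin u)$ and using $\bzn \bin u = \bin \bzn u - \frac{\bin I}{I} \bzn u$, the terms involving $\bzn u$ vanish for $u \in \ker\bzn$, leaving $\sum_i (\bzn \beta_i)(\bin u) = \sum_i (\alpha_i - \ave{\alpha_i})(\bin u) = (a-A)\dny u$. For $u = \psi_0$ only the index $i = 0$ contributes, since $\bin \psi_0 = 0$ for $i \geq 1$ and $\bzn \psi_0 = I \in \ker \bzn$, and both sides reduce to $(\alpha_0 - \ave{\alpha_0})I$.

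The more delicate point is $\Divy(IB) = 0$. My approach will be to read $a - A$ as a Lie bracket: expanding
\[
[b^0,B] = \sum_i \bigl((\bzn \beta_i)\, b^i + \beta_i\, [b^0,b^i]\bigr) = (a - A) - \frac{B\dny I}{I}\,b^0,
\]
we get $a - A = [b^0,B] + \frac{B\dny I}{I}\,b^0$. Taking divergence, using $\Divy[X,Y] = X\dny \Divy Y - Y\dny \Divy X$ together with $\Divy a = \Divy A = \Divy b^0 = 0$, gives
\[
0 = \bzn \Bigl(\Divy B + \frac{B\dny I}{I}\Bigr),
\]
so this quantity lies in $\ker \bzn$. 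Its average also vanishes: the factors $\Divy b^i$ and $\bin I$ belong to $\ker \bzn$ by Proposition \ref{FieldsDiv} and Corollary \ref{C} respectively, $\ave{\beta_i} = 0$, and $\ave{\bin \beta_i} = \bin \ave{\beta_i} = 0$. A function that is both constant along the flow of $b^0$ and of zero average must be identically zero, so $\Divy B + \frac{B\dny I}{I} = 0$, whence $\Divy(IB) = I\Divy B + B\dny I = 0$.

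Part (ii) is then immediate. Since $\ave{B\dny I / I} = 0$, the general analogue of Proposition \ref{TransportProp} furnishes a unique $\lambda_0$ with $\ave{\lambda_0}=0$ and $\bzn \lambda_0 = B\dny I/I$. For $u \in \ker \bzn$ we have $\tilde B \dny u = B \dny u + \lambda_0 \bzn u = B \dny u$, so \eqref{Equ39Bis} reduces to part (i); and $\Divy \tilde B = \Divy B + \bzn \lambda_0 + \lambda_0 \Divy b^0 = \Divy B + \frac{B \dny I}{I} = 0$ by the previous step. The main obstacle I anticipate is the Lie-bracket identity in the third paragraph: once one recognizes that $a - A$ is essentially $[b^0, B]$ modulo a multiple of $b^0$, the vanishing of $\Divy(IB)$ drops out of the kernel-plus-zero-average argument, whereas a direct coordinate computation of $\Divy(IB)$ becomes unwieldy.
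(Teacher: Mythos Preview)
Your proof is correct and follows the same strategy as the paper's: identify the relation $a - A = [b^0,B] + \frac{B\cdot\nabla_y I}{I}\,b^0$, take divergence to place $\Divy B + \frac{B\cdot\nabla_y I}{I}$ in $\ker(\bzn)$, then conclude by showing this quantity has zero average. The only differences are presentational --- you obtain the bracket identity directly by expanding $[b^0,\sum_i \beta_i b^i]$ via the Leibniz rule and Proposition~\ref{ComFields}, whereas the paper reaches it by testing $[b^0,B]\cdot\nabla_y u = (a-A)\cdot\nabla_y u$ on the coordinates $\psi_1,\dots,\psi_{m-1}$ and then $\psi_0$; and you verify the vanishing average via Corollary~\ref{InvFields} where the paper invokes Proposition~\ref{DivAve} on $IB$.
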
 
\begin{proof}
i) For any smooth function $u \in \ker (\bzn)$, taking into account that $(\bin)_{i\in \zm}$ leave invariant $\ker (\bzn)$, we have
\[
\Divy\{(B \dny u )b^0  \} =  \bzn (B\dny u) = \sumiz (\bin u) (\bzn \beta _i) = \Divy \{u(a-A)  \}.
\]
Taking now $u = \psi _0$ one gets
\[
\bzn (B \dny \psi _0) = \bzn (\beta _0 I) = I \bzn \beta _0 = I(\alpha _0 - \ave{\alpha _0}) = (a-A) \dny \psi _0.
\]
It remains to establish that $IB$ is divergence free. Notice that for any smooth function $u \in \ker (\bzn)$, the equation \eqref{Equ39} becomes
\[
[b^0,B]\dny u = \bzn (B \dny u) - B \dny (\bzn u) = (a-A) \dny u.
\]
In particular taking $u \in \{\psi _1,...,\psi _{m-1}\}$ we deduce that
\[
\{  [B, b^0] + a-A \}  \perp \mathrm{span} \{ \nabla _y \psi _1,...,\nabla _y \psi _{m-1} \} = (\R b^0) ^{\perp}
\]
implying that 
\begin{equation}
\label{Equ40} [B, b^0] + a-A = \lambda b^0.
\end{equation}
Taking now $u = \psi _0$ we obtain
\[
B \dny (\bzn \psi _0) - \bzn (B \dny \psi _0)   + (a-A) \dny \psi _0 = \lambda I
\]
and since we know that $\bzn ( B \dny \psi _0) = (a-A) \dny \psi _0$ we deduce that 
\begin{equation}
\label{Equ41} B \dny I = \lambda I.
\end{equation}
Taking the divergence of \eqref{Equ40} implies $- \bzn \Divy B = \bzn \lambda$ and therefore 
\begin{equation}
\label{Equ42} \Divy B + \lambda = \mu \in \ker (\bzn).
\end{equation}
Combining \eqref{Equ41}, \eqref{Equ42} yields $\Divy (IB) = \mu I$, $\bzn \mu = 0$. We are done if we prove that $\Divy (IB)$ has zero average, since, in this case we write
\[
\mu I = \ave{\mu I} = \ave{\Divy (IB)} = 0.
\]
Indeed, by Proposition \ref{DivAve} we have
\[
\ave{\Divy (IB)} = \Divy \left \{ \sumio \ave{IB \dny \psi _i} b^i  \right \} = \Divy \left \{ \sumio \ave{I \beta _i} b^i \right \} = 0.
\]
ii) Obviously we have $\Divy \tilde{B} = \Divy B + \bzn \lambda _0 = \Divy(IB)/I = 0$ and for any smooth function $u \in \ker (\bzn)$ we can write
\[
\Divy \{(\tilde B \dny u ) b^0   \} = \Divy \{ ( B \dny u ) b^0 \}= \Divy \{u (a- A)   \}.
\]
\end{proof}
\begin{remark}
\label{InvB} For any smooth function $u \in \ker (\bzn ) \cup \{\psi _0\} $ we have $\ave{B \dny u } = \ave{\tilde{B} \dny u } = 0$.
\end{remark}
\begin{remark}
\label{BInvT} For any function $u \in \ker (b ^0 \cdot \nabla _y)$ we have by Proposition \ref{Step1}
\[
\ave{(a-A) \cdot \nabla _y u } = \ave{\Divy (ua)} - \ave{\Divy (u A) } = 0
\]
and therefore, by Proposition \ref{TransportProp}, we know that there is a unique zero average function $w$ solving $b^0 \cdot \nabla _y w = (a - A) \cdot \nabla _y u$. Proposition \ref{Step2} says that $w = \tilde{B} \cdot \nabla _y u = B \cdot \nabla _y u$.
\end{remark}
\begin{remark}
\label{CasPartB} In our particular case (see Remark \ref{CasPartaA}) we obtain the coefficients
\[
\beta _0 = \frac{qE }{m \omega _c ^2}\cdot \frac{v}{|v|^2},\;\;\beta _1 = - \frac{v_2}{\omega _c},\;\;\beta _2 = \frac{v_1}{\omega _c},\;\;\beta _3 = - \frac{qE}{m \omega _c} \cdot \frac{^\perp v}{|v|} = v _\wedge \cdot \frac{v}{|v|}
\]
where $v_\wedge = {^\perp E}/B$ is the electric cross field drift, and therefore
\[
{\cal B} = \sum _{i = 0} ^3 \beta _i b^i = \left (- \frac{^\perp v}{\omega _c}, v_\wedge   \right ),\;\;\Divxv {\cal B} = \frac{^\perp v \cdot \nabla _x \omega _c}{\omega _c ^2}
\]
\[
\lambda _0 = - \frac{v \cdot \nabla _x \omega _c}{\omega _c ^3 },\;\;\tcalB = \left (- \frac{^\perp v}{\omega _c}, v_\wedge - \frac{v \cdot \nabla _x \omega _c}{\omega _c ^2 }\;^\perp v  \right ).
\]
For any function $u(x,v) \in \ker {\cal T}$ the unique zero average function $w(x,v)$ solving 
\[
{\cal T}w = (a- {\cal A}) \cdot \nabla_{x,v} u = a\cdot \nabla _{x,v} u = v\cdot \nabla _x u + \frac{q}{m}E \cdot \nabla _v u
\]
is given by 
\[
w = \tilde{\cal B} \cdot \nabla _{x,v} u = {\cal B} \cdot \nabla _{x,v} u = - \frac{^\perp v}{\oc} \cdot \nabla _x u + v _\wedge \cdot \nabla _v u.
\]
\end{remark}
\bigskip
The last field we compute will be useful when deriving the second order approximation, in Section \ref{SecOrdMod}.
\begin{pro}
\label{Step3} Consider $a = a(y)$ a smooth, divergence free vector field in $\R^m$ and let $\tilde{B}$ be the vector field defined in Proposition \ref{Step2}, ii). Then there is a divergence free vector field $C = C(y)$ such that for any smooth function $u \in \ker (\bzn) \cup \{\psi _0 \}$ we have
\[
\ave{\Divy \{( \tilde{B} \dny u) a   \}} = \Divy ( uC).
\]
\end{pro}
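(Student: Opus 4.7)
The strategy mimics the proof of Proposition \ref{Step1}. Applying Proposition \ref{DivAve} to the vector field $\xi = (\tilde B \dny u)\, a$ yields
\[
\ave{\Divy\{(\tilde B \dny u)\,a\}} = \Divy\Big\{\sumio \ave{(\tilde B \dny u)\,\alpha_i}\, b^i\Big\},\qquad \alpha_i := a \dny \psi_i.
\]
For $u\in\ker(\bzn)$ I expand $\tilde B \dny u = \sumjo \tilde\beta_j (b^j \dny u)$; by Corollary \ref{C} each factor $b^j \dny u$ lies in $\ker(\bzn)$ and can be pulled out of the average, giving
\[
\ave{(\tilde B \dny u)\,\alpha_i}=\sumjo (b^j \dny u)\,c_{ij},\qquad c_{ij}:=\ave{\tilde\beta_j\alpha_i}.
\]

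The key algebraic fact is that $c_{ij}$ is antisymmetric on $\{1,\dots,m-1\}^2$: since $\tilde\beta_j=\beta_j$ for $j\ge1$ and $\bzn\beta_j=\alpha_j-\ave{\alpha_j}$, one has $c_{ij}=\ave{\beta_j\,\bzn\beta_i}=-\ave{\beta_i\,\bzn\beta_j}=-c_{ji}$ by Proposition \ref{IntByPart}. Expanding the outer divergence by Leibniz, the purely second-order-in-$u$ term $\sum_{i,j\ge1} c_{ij}\,b^i \dny b^j \dny u$ vanishes because $[b^i,b^j]=0$ for $i,j\ge 1$ (Proposition \ref{ComFields}) makes $b^i \dny b^j \dny u$ symmetric in $(i,j)$, contracted with the antisymmetric $c_{ij}$. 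What remains is first-order in $u$:
\[
\ave{\Divy\{(\tilde B \dny u)\,a\}} = \sumjo (b^j \dny u)\,\gamma_j, \qquad \gamma_j := \Divy\Big(\sumio c_{ij}\,b^i\Big).
\]
This suggests the candidate $C = \gamma_0\, b^0 + \sumjo\gamma_j\, b^j$, with $\gamma_0 \in \ker(\bzn)$ determined by the identity $\gamma_0 I = \ave{\Divy(\tilde\beta_0\,I\, a)}$ that arises from testing at $u=\psi_0$ (well defined since both $I$ and the numerator lie in $\ker(\bzn)$). For $u\in\ker(\bzn)$, $\Divy(uC) = C \dny u + u\,\Divy C = \sumjo \gamma_j(b^j \dny u) + u\,\Divy C$, so the desired formula $\ave{\Divy((\tilde B \dny u) a)} = \Divy(uC)$ follows once $\Divy C = 0$ is established.

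Verifying $\Divy C = 0$ is the main obstacle. Since $\gamma_0 \in \ker(\bzn)$ one has $\Divy(\gamma_0 b^0) = \bzn\gamma_0 = 0$, so it suffices to show $\sumjo \Divy(\gamma_j\, b^j) = 0$. Expanding this via Leibniz produces five double sums over $i,j\in\{1,\dots,m-1\}$; three of them vanish by antisymmetry of $c_{ij}$ combined with the symmetry of $b^j \dny b^i$ (Proposition \ref{ComFields}), the symmetry of $b^i \dny \Divy b^j$ (Proposition \ref{FieldsDiv}), and the symmetry of $(\Divy b^i)(\Divy b^j)$, and the two remaining sums, $\sum_{i,j}(b^j \dny c_{ij})\Divy b^i$ and $\sum_{i,j}(b^i \dny c_{ij})\Divy b^j$, cancel after relabelling $i\leftrightarrow j$ and using once more the antisymmetry of $c$. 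Hence $\Divy C = 0$ and the identity holds on $\ker(\bzn)$. The final check that the identity also holds at $u = \psi_0$ is then immediate from the choice of $\gamma_0$ together with $b^j \dny \psi_0 = \delta_{j0}\, I$.
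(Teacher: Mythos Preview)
Your argument is correct and follows the same overall architecture as the paper: apply Proposition \ref{DivAve}, expand $\tilde B\dny u$ in the basis $(b^j)_{j\ge 1}$, use the antisymmetry $c_{ij}=-c_{ji}$ (obtained exactly as in the paper, via Proposition \ref{IntByPart}) to kill the second-order-in-$u$ term, and then read off $\gamma_j$. Your $\gamma_j=\Divy\big(\sumio c_{ij}b^i\big)$ coincides with the paper's $\gamma_j=\ave{\Divy(\beta_j a)}$, since the paper obtains the latter from the former by a second application of Proposition \ref{DivAve}; and your $\gamma_0$ matches the paper's $\gamma_0=\ave{\Divy((\beta_0+\lambda_0)Ia)}/I$ verbatim once one notes $\tilde\beta_0=\beta_0+\lambda_0$.

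The only genuine difference is how you establish $\Divy C=0$. The paper argues variationally: for any smooth $u\in\ker(\bzn)$,
\[
-\inty{u\,\Divy C}=\inty{C\dny u}=\inty{\ave{\Divy\{(B\dny u)a\}}}=\inty{\Divy\{(B\dny u)a\}}=0,
\]
whence $\ave{\Divy C}=0$; since $\Divy C\in\ker(\bzn)$ (because $\gamma_i,\Divy b^i,\bin\gamma_i$ all lie in $\ker(\bzn)$), this forces $\Divy C=0$. You instead expand $\sumjo\Divy(\gamma_j b^j)$ directly into five double sums and kill them one by one using the antisymmetry of $c_{ij}$ together with the commutation $[b^i,b^j]=0$ and the symmetry $b^i\dny\Divy b^j=b^j\dny\Divy b^i$ from Propositions \ref{ComFields} and \ref{FieldsDiv}. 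Both routes are valid; the paper's is shorter and makes transparent why divergence-free is automatic (it is inherited from $\Divy a=0$ through the integral identity), while yours is purely algebraic and avoids any integration, at the cost of a somewhat heavier bookkeeping.
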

\begin{proof}
For the field $B$ given by \eqref{fieldB} and for any smooth function $u \in \ker (\bzn)$ we have by Proposition \ref{DivAve} 
\begin{eqnarray}
\label{Equ44} \ave{\Divy \{( \tilde{B} \dny u) a   \}} & = & \Divy \left \{\sumio \ave{(B \dny u) ( \an \psi _i )}b^i  \right \} \nonumber \\
& = & \Divy \left \{ \sumio \sumjo \ave{\alpha _i \beta _j} (\bjn u) b^i  \right \} \nonumber \\
& = & \sumjo \Divy \left \{\sumio \ave{\alpha _i \beta _j} b^i  \right \} (\bjn u) \nonumber \\
& + &  \sumjo \sumio \ave{\alpha _i \beta _j} \bin (\bjn u).
\end{eqnarray}
The last term vanishes, as a contraction between the anti-symmetric matrix $(\ave{\alpha _i \beta _j})_{i,j}$ and the symmetric matrix $(\bin (\bjn u))_{i,j}$. Indeed, by Proposition \ref{IntByPart} we obtain
\[
\ave{\alpha _i \beta _j} = \ave{(\alpha _i - \ave{\alpha _i}) \beta _j} = \ave{(\bzn \beta _i) \beta _j} = - \ave{\beta _i (\bzn \beta _j)} = - \ave{\alpha _j \beta _i}
\]
and by Proposition \ref{ComFields} one gets
\[
\bin (\bjn u) - \bjn (\bin u) = [b^i,b^j] \dny u = 0,\;\;i,j \in \om.
\]
Using again Proposition \ref{DivAve}, we deduce that for any $j \in \om$
\begin{equation}
\label{Equ45} \Divy \left \{ \sumio \ave{\alpha _i \beta _j } b^i  \right \} = \Divy \left \{ \sumio   \ave{\beta _j \an \psi _i } b^i \right \} = \ave{\Divy (\beta _j a)}.
\end{equation}
Combining \eqref{Equ44}, \eqref{Equ45} we obtain
\[
\ave{\Divy \{(\tilde{B} \dny u) a \}} = \sumjo \gamma _j \bjn u = \sumjz \gamma _j \bjn u
\]
with the notation
\[
\gamma _0 = \frac{\ave{\Divy (\beta _0 + \lambda _0 ) I a)}}{I},\;\;\gamma _j = \ave{\Divy (\beta _j a)},\;\;j \in \om,\;\;C = \sumjz \gamma _j b^j.
\]
It is easily seen that the above formula still holds true for $u = \psi _0$. It remains to prove that $\Divy C = 0$. Since $\Divy (\gamma _0 b^0) = 0$, it is enough to prove that $\Divy \{\sumio \gamma _i b^i  \} = 0$. For any smooth function $u \in \ker ( \bzn)$ we have
\[
- \inty{\!\!u \;\Divy C} = \inty{\!\!C \dny u } = \inty{\!\!\ave{\Divy \{(B \dny u) a \}}} = \inty{\!\!\Divy \{(B \dny u) a \}} = 0
\]
and therefore $\ave{\Divy C} = 0$. But $\Divy C$ is constant along the flow of $b^0$
\[
\Divy C = \sumio \gamma _i \Divy b^i + \sumio \bin \gamma _i
\]
since $\gamma _i, \Divy b^i, \bin \gamma _i$ are constant along the same flow. Therefore $\Divy C = \ave{\Divy C} = 0$.
\end{proof}
\begin{remark}
\label{ZeroCoeff} Actually we will need a divergence free vector field $C$ satisfying the statement in Proposition \ref{Step3} only for functions $u \in \ker (\bzn)$. The coordinates along $(b^i)_{i \in \om}$ of such vector fields are uniquely determined by $\gamma _i = \ave{\Divy (\beta _i a)}$, $i \in \om$ and the coordinate along $b^0$ could be any smooth function in $\ker (\bzn)$, which guarantees the constraint $\Divy C = 0$.
\end{remark}
\begin{remark}
\label{FieldC}  In the particular case of the vector fields $b^0 = (0,0,\oc \;^\perp v)$, $b^1 (1,0,0,0)$, $b^2 = (0,1,0,0)$, $b^3 = (0,0,v/|v|)$, $a = (v,q/m E)$ we obtain the coefficients
\[
(\gamma _1, \gamma _2) = - v_\wedge - v_{\mathrm{GD}},\;\;\gamma _3 = - \frac{|v| v _\wedge }{2} \cdot \frac{\nabla _x B}{B}
\]
where $v_\wedge, v_{\mathrm{GD}}$ are the (rescaled) electric cross field drift and magnetic gradient drift
\[
v_\wedge = \frac{^\perp E}{B},\;\;v_{\mathrm{GD}} = - \frac{|v|^2}{2\oc} \frac{\;^\perp \nabla _x B}{B}
\]
and the vector field ${\cal C}$ becomes
\begin{equation}
\label{EquCComponent}
{\cal C} = \left ( - v_\wedge - v_{\mathrm{GD}}, - \frac{v_\wedge \cdot \nabla _x B}{2B}v + \gamma _0 \oc \;^\perp v  \right)
\end{equation}
for some $\gamma _0 \in \ker {\cal T}$. It is easily seen that $\Divx v_\wedge = - (v _\wedge \cdot \nabla _x B)/B$, $\Divx v_{\mathrm{GD}} = 0$ and thus $\Divxv {\cal C} = 0$. In the sequel we will consider $\gamma _0 = 0$.
\end{remark}
\begin{pro}
\label{Step4} Consider $a = a(y)$ a smooth, divergence free vector field in $\R^m$. Then for any smooth function $u$ we have
\[
\Divy \{(\tilde{B} \dny u) A   \} = \Divy \{ u [A, \tilde{B}]\} + \Divy \{(A\dny u ) \tilde{B}\}
\]
where the vector fields $A$ and $\tilde{B}$ are given by \eqref{fieldA} and Proposition \ref{Step2}, ii), respectively. 
\end{pro}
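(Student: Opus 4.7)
The statement is essentially an algebraic identity on smooth fields, and the plan is to expand both sides directly, exploiting the fact that both $A$ and $\tilde{B}$ are divergence free (the former by Proposition \ref{Step1}, the latter by Proposition \ref{Step2}, ii).

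First, I would rewrite the left-hand side. Since $\Divy A = 0$, the Leibniz rule gives
\[
\Divy \{(\tilde{B} \dny u) A\} = A \dny (\tilde{B} \dny u).
\]
Similarly, for the second term on the right-hand side, the fact $\Divy \tilde{B} = 0$ yields
\[
\Divy \{(A \dny u) \tilde{B}\} = \tilde{B} \dny (A \dny u).
\]
Subtracting, the combination $A \dny (\tilde{B} \dny u) - \tilde{B} \dny (A \dny u)$ is by definition $[A,\tilde{B}] \dny u$.

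Second, I would handle the term $\Divy \{u [A, \tilde{B}]\}$. The key observation is that $[A,\tilde{B}]$ is itself divergence free whenever $A$ and $\tilde{B}$ are: a direct coordinate computation using Schwarz's theorem shows
\[
\Divy [A,\tilde{B}] = A \dny (\Divy \tilde{B}) - \tilde{B} \dny (\Divy A),
\]
since the cross terms $(\partial _i A_j)(\partial _j \tilde{B}_i)$ and $(\partial _i \tilde{B}_j)(\partial _j A_i)$ cancel after relabeling. Both factors on the right vanish, so $\Divy [A,\tilde{B}] = 0$, and consequently
\[
\Divy \{u [A, \tilde{B}]\} = [A, \tilde{B}] \dny u.
\]

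Finally, assembling the pieces, the right-hand side of the claimed identity becomes
\[
[A, \tilde{B}] \dny u + \tilde{B} \dny (A \dny u) = A \dny (\tilde{B} \dny u),
\]
which matches the expression already obtained for the left-hand side. There is no genuine obstacle: the only nontrivial ingredient is the divergence-free character of $[A,\tilde{B}]$, and this is a standard consequence of the Schwarz lemma once $\Divy A = \Divy \tilde{B} = 0$ has been recorded.
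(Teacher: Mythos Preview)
Your proof is correct and follows essentially the same argument as the paper: both use $\Divy A=\Divy\tilde{B}=0$ to reduce the divergence terms to directional derivatives, invoke the identity $\Divy[A,\tilde{B}]=A\dny(\Divy\tilde{B})-\tilde{B}\dny(\Divy A)=0$, and then assemble the commutator identity $A\dny(\tilde{B}\dny u)=[A,\tilde{B}]\dny u+\tilde{B}\dny(A\dny u)$. The only difference is cosmetic ordering of the steps.
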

\begin{proof}
The field $[A,\tB]$ is divergence free
\[
\Divy [A,\tB] = A \dny \Divy \tB - \tB \dny \Divy A = 0
\]
and therefore 
\begin{eqnarray}
\Divy\{(\tB \dny u ) A  \} & = &  A \dny ( \tB \dny u)  \nonumber \\
& = &  [A, \tB] \dny u + \tB \dny (A \dny u) \nonumber \\
& = &  \Divy \{ u [A, \tB]\} + \Divy \{(A\dny u ) \tilde{B}\}. \nonumber 
\end{eqnarray}
\end{proof}
\begin{remark}
\label{Use} Let $u \in \ker (\bzn )$ and $z$ satisfying
\[
\Divy \{ u (a - A)\} + \Divy \{z b^0\} = 0,\;\;\ave{z} = 0
\]
which means that $z = - \tB \dny u$, cf. Proposition \ref{Step2}. Then combining Propositions \ref{Step3} and \ref{Step4} we obtain
\begin{eqnarray}
\ave{\Divy (za)} - \Divy (zA) & = & - \ave{\Divy \{(\tB \dny u)a   \}} + \Divy \{(\tB \dny u )A  \} \nonumber \\
& = & - \Divy (uC) + \Divy \{u [A, \tB]  \} + \Divy \{ (A \dny u ) \tB \} \nonumber \\
& = & \Divy \{u ( [A, \tB] - C) + (A \dny u ) \tB   \}.\nonumber 
\end{eqnarray}
\end{remark}

\subsection{Average and the collision operator}
\label{AveColOpe}
\indent

In this section we will show that the average operator associated to ${\cal T} = \oc {^\perp v} \cdot \nabla _v$ and the collision  operator are commuting. This means that at the lowest order the collision mechanism is not affected by strong magnetic fields. Corrections appear only at the next orders, as commutators between transport and convolution through the scattering cross section, see Proposition \ref{CollCommB}. We first recall the classical properties of the linear Boltzmann operator $Q$. The notation $\ltm$ stands for $\ltmvx$.

\begin{pro}
\label{LinBolOpe} Assume that the scattering cross section satisfies \eqref{Equ4}. Then the gain/loss collision operators $Q_\pm$
\[
Q_+ (f)(x,v) = \frac{1}{\tau} \intvp{s(v,\vp) M(v) f(x,\vp)},\;Q_- (f)(x,v) = \frac{1}{\tau} \intvp{s(v,\vp) M(\vp) f(x,v)}
\]
are linear bounded operators from $\ltm$ to $\ltm$ and $\|Q_\pm\| \leq S_0 /\tau$. Moreover, if the scattering cross section also satisfies \eqref{Equ3}, then $Q_\pm$ are symmetric with respect to the scalar product of $\ltm$.
\end{pro}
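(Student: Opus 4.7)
The plan is to treat $Q_+$ by a weighted Cauchy--Schwarz estimate with weight $M(\vp)$ in the variable $\vp$, to treat $Q_-$ by a pointwise bound (since it acts as a multiplication operator), and to establish symmetry by a Fubini argument exploiting \eqref{Equ3}.

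For the boundedness of $Q_+$, observe that
\[
\frac{Q_+(f)(x,v)}{M(v)} = \frac{1}{\tau} \intvp{s(v,\vp)\, \sqrt{M(\vp)}\, \frac{f(x,\vp)}{\sqrt{M(\vp)}}}.
\]
Applying Cauchy--Schwarz in $\vp$ and using \eqref{Equ4} together with $\intvp{M(\vp)} = 1$ yields
\[
\left (\frac{Q_+(f)(x,v)}{M(v)}\right )^{2} \leq \frac{S_0^2}{\tau^2}\, \intvp{\frac{f(x,\vp)^2}{M(\vp)}}.
\]
Multiplying by $M(v)$, integrating in $v$ and $x$, and using once more $\intv{M(v)} = 1$ gives $\|Q_+(f)\|_{\ltm} \leq (S_0/\tau)\|f\|_{\ltm}$. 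For $Q_-$, the bound is even easier: since $Q_-(f)(x,v) = f(x,v)\, \nu(v)/\tau$ with $\nu(v) := \intvp{s(v,\vp) M(\vp)} \leq S_0$, one has pointwise $|Q_-(f)/M(v)|^2 M(v) \leq (S_0^2/\tau^2)\, f^2/M(v)$, which integrates to the desired $\ltm$ estimate.

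For the symmetry under assumption \eqref{Equ3}, I would simply unfold the scalar products and apply Fubini. For $Q_+$,
\[
(Q_+ f, g)_{\ltm} \,=\, \frac{1}{\tau}\intxvbis{s(v,\vp)\, f(x,\vp)\, g(x,v)}
\]
and swapping the dummy variables $v \leftrightarrow \vp$ combined with $s(v,\vp)=s(\vp,v)$ produces $(f,Q_+ g)_{\ltm}$. For $Q_-$, the formula $(Q_- f, g)_{\ltm} = \tau^{-1}\intxvbis{s(v,\vp) M(\vp) f(x,v) g(x,v)/M(v)}$ is already manifestly symmetric in $(f,g)$, without any use of \eqref{Equ3}.

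No real obstacle is expected; the only slight subtlety is the correct choice of weight in Cauchy--Schwarz for $Q_+$ (one must split $s(v,\vp) f(x,\vp) = [s(v,\vp)\sqrt{M(\vp)}]\cdot[f(x,\vp)/\sqrt{M(\vp)}]$ so that the $L^2_M$ norm of $f$ appears on the right and the remaining integral $\intvp{M(\vp)}$ is a pure constant).
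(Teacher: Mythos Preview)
Your argument is correct and entirely standard: Cauchy--Schwarz with the weight $\sqrt{M(\vp)}$ for $Q_+$, a pointwise multiplier bound for $Q_-$, and Fubini plus \eqref{Equ3} for the symmetry of $Q_+$ (with $Q_-$ trivially self-adjoint as a multiplication operator). The paper itself does not prove this proposition; it is introduced with ``We recall the classical properties of the linear Boltzmann operator $Q$'' and stated without proof, so there is nothing to compare against beyond confirming that your derivation is the expected one.

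One small notational slip: in your displayed formula for $(Q_+ f, g)_{\ltm}$ you used the macro \verb|\intxvbis|, which in this paper denotes $\int_{\R^4}\cdots\,\mathrm{d}v\mathrm{d}x$, but the integrand $s(v,\vp)f(x,\vp)g(x,v)$ also depends on $\vp$, so the expression is really a six-dimensional integral $\int_{\R^2}\int_{\R^2}\int_{\R^2}\cdots\,\mathrm{d}\vp\,\mathrm{d}v\,\mathrm{d}x$. This is harmless mathematically but worth writing out in full (or with an explicit $\intvp{\cdot}$ inside) if the proof is to be inserted into the paper.
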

Notice that $M$ depends only on $|v|$ and therefore $M \in \ker {\cal T}$, ${\cal T} = \oc \;^\perp v \cdot \nabla _v $. Observe that the average operator is also bounded from $\ltm$ to $\ltm$. Indeed, for any $f \in \ltm$ we have $f/\sqrt{M} \in L^2 (\mathrm{d}v \mathrm{d}x)$ and thus
\[
\|\ave{f} \|^2 _{L^2_M} = \left \| \ave{\frac{f}{\sqrt{M}}} \right \|^2 _{L^2} \leq \left \| \frac{f}{\sqrt{M}} \right \|^2 _{L^2} = \|f \|^2 _{L^2_M}. 
\]
We assume that  there is a function $\sigma : \R_+ \to [s_0,S_0]$ such that 
\begin{equation}
\label{Equ47} s(v,\vp) = \sigma ( |v - \vp|),\;\;v, \vp \in \R.
\end{equation}
\begin{lemma}
\label{LC} Assume that the scattering cross section satisfies \eqref{Equ47}. Then the gain/loss collision operators are commuting with the characteristic flow of ${\cal T}$. In particular the gain/loss collision operators leave invariant $\ltm \cap \ker {\cal T}$ and $\ltm \cap \ker \ave{\cdot}$. 
\end{lemma}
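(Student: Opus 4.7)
The plan is to exploit the rotational invariance of the cross section (since $s(v,\vp)=\sigma(|v-\vp|)$), of the Maxwellian (since $M$ depends only on $|v|$), and of Lebesgue measure on $\R^2$, so that the change of variables $\vp = R(-\oc s)w$ in the integrals defining $Q_\pm$ implements exactly the action of the flow $(X(s;x,v),V(s;x,v))=(x, R(-\oc s)v)$.

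Concretely, I would first compute
\[
Q_+(f)(X(s;x,v),V(s;x,v)) = \frac{1}{\tau}\intvp{\sigma(|R(-\oc s)v - \vp|)\,M(R(-\oc s)v)\,f(x,\vp)}.
\]
Then I would substitute $\vp = R(-\oc s)w$. Since $R(-\oc s)$ is an orthogonal transformation we have $|R(-\oc s)v - R(-\oc s)w| = |v-w|$, $M(R(-\oc s)v) = M(v)$, and $\mathrm{d}\vp = \mathrm{d}w$. This yields
\[
Q_+(f)(X(s;x,v),V(s;x,v)) = \frac{1}{\tau}\int_{\R^2}\sigma(|v-w|)\,M(v)\,f(X(s;x,v),V(s;x,w))\,\mathrm{d}w = Q_+(f\circ(X(s),V(s)))(x,v),
\]
which is exactly the commutation of $Q_+$ with the flow. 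The argument for $Q_-$ is identical, now using that $M(\vp)$ becomes $M(R(-\oc s)w) = M(w)$ and that $f(X(s;x,v),V(s;x,v)) = f(x,R(-\oc s)v)$ factors out of the $w$-integral.

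From commutation with the flow the remaining assertions follow immediately. If $f \in \ker {\cal T}$, then $f(x,v) = f(X(s;x,v),V(s;x,v))$ for all $s$, so $Q_\pm(f)(x,v) = Q_\pm(f\circ(X(s),V(s)))(x,v) = Q_\pm(f)(X(s;x,v),V(s;x,v))$, i.e.\ $Q_\pm(f) \in \ker {\cal T}$. For $f \in \ker \ave{\cdot}$, integrating the pointwise identity in $s$ over one period $\tc$ and dividing by $\tc$ gives $Q_\pm(f) = \ave{Q_\pm(f \circ (X(\cdot),V(\cdot)))} = Q_\pm(\ave{f}) = 0$ after interchanging the average (an $L^2$-bounded integral over $s$) with $Q_\pm$ (an $L^2$-bounded operator), hence $Q_\pm(f) \in \ker \ave{\cdot}$ as well (using the already established commutation of $Q_\pm$ with the flow to pull the $s$-integration inside $Q_\pm$, together with the boundedness from Proposition~\ref{LinBolOpe}).

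There is no real obstacle; the only thing to watch is that $\oc = \oc(x)$ so the rotation angle in the flow depends on $x$, but since the $\vp$-integration is performed at fixed $x$ this dependence is harmless, and that the change of variables by a rotation preserves the measure $\mathrm{d}\vp$. Everything else is a direct application of the rotational invariance of $\sigma(|\cdot|)$ and of $M(|\cdot|)$.
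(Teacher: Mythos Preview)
Your commutation argument with the flow and the deduction that $Q_\pm$ preserve $\ker{\cal T}$ are essentially identical to the paper's proof. There is a small slip in your chain of equalities for the zero-average part: averaging the left-hand side of the commutation identity over $s$ gives $\ave{Q_\pm(f)}$, not $Q_\pm(f)$, so the line should read $\ave{Q_\pm(f)} = Q_\pm(\ave{f}) = 0$; your final conclusion $Q_\pm(f)\in\ker\ave{\cdot}$ is then correct.

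The one genuine difference from the paper is in how you handle the invariance of $\ker\ave{\cdot}$. You argue directly: average the commutation identity in $s$ and use Fubini (equivalently, linearity and $L^2$-boundedness of $Q_\pm$) to pull the $s$-integral inside $Q_\pm$, obtaining $\ave{Q_\pm(f)}=Q_\pm(\ave{f})$. The paper instead uses a duality argument: for $h\in\ker\ave{\cdot}$ and $\varphi\in\ker{\cal T}$ it writes $(Q_\pm(h),\sqrt{M}\varphi)_{\ltm}=(h,Q_\pm(\sqrt{M}\varphi))_{\ltm}$ via the symmetry of $Q_\pm$ (Proposition~\ref{LinBolOpe}), and then uses the already established fact that $Q_\pm(\sqrt{M}\varphi)\in\ker{\cal T}$ together with $h/\sqrt{M}\in\ker\ave{\cdot}$ to conclude the pairing vanishes. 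Your route is more elementary in that it avoids the symmetry hypothesis \eqref{Equ3}, relying only on the rotational invariance \eqref{Equ47}; it also yields the commutation $\ave{Q_\pm(\cdot)}=Q_\pm(\ave{\cdot})$ in one stroke, which the paper records separately as Corollary~\ref{ColAve}. The paper's duality argument, on the other hand, makes explicit the interplay between the orthogonal decomposition $L^2=\ker{\cal T}\oplus\ker\ave{\cdot}$ and the self-adjointness of $Q_\pm$.
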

\begin{proof}
Consider $f \in \ltm $ and for any $\alpha \in \R$ let us introduce $f_\alpha (x,v) = f(x, R_\alpha v)$, where $R(\alpha)$ stands for the rotation of angle $\alpha$ (see \eqref{EquFlow}). Obviously $f _\alpha \in \ltm$ and
\begin{eqnarray}
(Q_+ f_\alpha ) (x,v) & = & \frac{1}{\tau} \intvp{\sigma ( | v - \vp|)M(v) f _\alpha (x,\vp)} \nonumber \\
& = & \frac{1}{\tau} \intvp{\sigma ( | v - \vp|)M(v) f(x,R_\alpha \vp)} \nonumber \\
& = & \frac{1}{\tau} \int_{\R^2} \sigma ( | v - R_{-\alpha} w^\prime |)M(v) f (x,w^\prime) \;\mathrm{d}w^\prime \nonumber \\
& = & \frac{1}{\tau} \int_{\R^2}\sigma ( | R_\alpha v - w^\prime|)M(R_\alpha v) f  (x,w^\prime) \;\mathrm{d}w^\prime\nonumber \\
& = & (Q_+f)(x,R_\alpha v).\nonumber 
\end{eqnarray}
Similarly one gets
\begin{eqnarray}
(Q_- f_\alpha ) (x,v) & = & \frac{1}{\tau} \intvp{\sigma ( | v - \vp|)M(\vp) f _\alpha (x,v)} \nonumber \\
& = & \frac{1}{\tau} \intvp{\sigma ( | v - \vp|)M(\vp) f(x,R_\alpha v)} \nonumber \\
& = & \frac{1}{\tau} \int_{\R^2} \sigma ( | R_\alpha v - R_{\alpha} \vp |)M(R_\alpha \vp) f (x,R_\alpha v) \;\mathrm{d}\vp \nonumber \\
& = & \frac{1}{\tau} \int_{\R^2}\sigma ( | R_\alpha v - w^\prime|)M(w^\prime) f  (x,R_\alpha v) \;\mathrm{d}w^\prime\nonumber \\
& = & (Q_-f)(x,R_\alpha v).\nonumber 
\end{eqnarray}
In particular, if $f \in \ltm{} \cap \ker {\cal T}$ then
\[
(Q_\pm f)(x, R_\alpha v) = (Q_\pm f_\alpha )(x,v) = (Q_\pm f)(x,v)
\]
saying that $Q_\pm f \in \ker {\cal T}$. 
Consider now $h \in \ltm{} \cap \ker \ave{\cdot} $ and $\varphi  \in \ker {\cal T}$. Then $Q_\pm (h)/\sqrt{M} \in L^2(\mathrm{d}v\mathrm{d}x)$, $\sqrt{M} \varphi \in \ltm{}$ and we can write
\[
\intxvbis{\frac{Q_\pm (h)}{\sqrt{M}} \varphi } = ( Q_\pm (h), \sqrt{M} \varphi ) _\ltm  = (h, Q_\pm (\sqrt{M} \varphi )) _\ltm = \intxvbis{\frac{h}{\sqrt{M}} \frac{Q_\pm (\sqrt{M} \varphi )}{\sqrt{M}}}.
\]
By the previous computations we know that $Q_\pm (\sqrt{M} \varphi )\in \ltm \cap \ker {\cal T}$ and thus $Q_\pm (\sqrt{M} \varphi )/\sqrt{M} \in L^2 \cap \ker {\cal T}$. Since $h/\sqrt{M} \in L^2 \cap \ker \ave{\cdot}$ we deduce that 
\[
\intxv{\frac{Q_\pm (h)}{\sqrt{M}} \varphi } = \intxv{\frac{h}{\sqrt{M}} \frac{Q_\pm (\sqrt{M} \varphi )}{\sqrt{M}}} = 0
\]
saying that $\ave{Q_\pm (h) /\sqrt{M} } = 0$ and thus $Q_\pm (h) \in \ltm \cap \ker \ave{\cdot}$.
\end{proof}
\begin{coro}
\label{ColAve} The average and the gain/loss collision operators are commuting on $\ltm{}$. 
\end{coro}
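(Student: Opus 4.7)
The plan is to reduce the claimed commutation $\ave{Q_\pm(f)} = Q_\pm(\ave{f})$ to the invariance properties already proved in Lemma \ref{LC}, by exploiting the orthogonal decomposition of $\ltm$ coming from Proposition \ref{AverageProp}. Concretely, every $f \in \ltm$ splits as
\[
f = \ave{f} + (f - \ave{f}),
\]
where (after conjugation by $\sqrt{M} \in \ker {\cal T}$, which preserves both $\ker {\cal T}$ and $\ker \ave{\cdot}$) the first summand lies in $\ltm \cap \ker {\cal T}$ and the second in $\ltm \cap \ker \ave{\cdot}$.

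I would then apply $Q_\pm$ to each piece separately and invoke Lemma \ref{LC}: the first piece $\ave{f}$ is sent by $Q_\pm$ into $\ltm \cap \ker {\cal T}$, so $\ave{\cdot}$ leaves it unchanged, while the second piece $f - \ave{f}$ is sent into $\ltm \cap \ker \ave{\cdot}$, so its average vanishes. By linearity of both $\ave{\cdot}$ and $Q_\pm$,
\[
\ave{Q_\pm(f)} = \ave{Q_\pm(\ave{f})} + \ave{Q_\pm(f - \ave{f})} = Q_\pm(\ave{f}) + 0 = Q_\pm(\ave{f}),
\]
which is exactly the desired commutation.

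The only subtlety worth verifying is that the orthogonal decomposition from Proposition \ref{AverageProp} (which is formulated in the unweighted $\ltxv$) transports correctly to the weighted space $\ltm$. This is straightforward because $M$ depends only on $|v|$ and thus belongs to $\ker {\cal T}$, so the map $f \mapsto f/\sqrt{M}$ is an isometry from $\ltm$ onto $\ltxv$ that commutes with $\ave{\cdot}$ and preserves $\ker {\cal T}$ and $\ker \ave{\cdot}$. I do not anticipate a serious obstacle here: the work has already been done in Lemma \ref{LC}, and the corollary is essentially a structural consequence of the fact that $Q_\pm$ respects the orthogonal splitting $\ltm = (\ltm \cap \ker {\cal T}) \oplus (\ltm \cap \ker \ave{\cdot})$.
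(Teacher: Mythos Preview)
Your proof is correct and follows essentially the same approach as the paper: decompose $f = \ave{f} + (f - \ave{f})$, apply Lemma~\ref{LC} to each piece to obtain $Q_\pm(\ave{f}) \in \ker {\cal T}$ and $Q_\pm(f - \ave{f}) \in \ker \ave{\cdot}$, and conclude by linearity. Your added remark about transporting the orthogonal decomposition to $\ltm$ via the isometry $f \mapsto f/\sqrt{M}$ is a reasonable bit of extra care that the paper leaves implicit.
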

\begin{proof}
Consider $f \in \ltm{}$. Using the decomposition $f = \ave{f} + (f - \ave{f})$ one gets by the previous lemma that ${\cal T} Q_\pm \ave{f} = 0$, $\ave{Q_\pm (f - \ave{f})} = 0$ and therefore
\[
\ave{Q_\pm (f) } = \ave{Q_\pm \ave{f}} + \ave{Q_\pm (f - \ave{f})} = Q_\pm \ave{f}.
\]
\end{proof}
We end this section with the following commutation result, which will be used when investigating the second order approximation.

\begin{pro}
\label{CollCommB}
Assume that the scattering cross section satisfies \eqref{Equ47} and it is smooth. Then for any smooth function $f = f(x,v) \in \ker {\cal T}$ we have
\begin{eqnarray}
\tQo (f) & := &  Q(\tcalB \cdot \nxv f ) - \tcalB \cdot \nxv Q(f) \nonumber \\
& = & \frac{1}{\tau} \intvp{\left ( s_1 - \frac{|\vp|}{|v|} s_2 \right ) M(v) \frac{^\perp v}{\oc} \cdot \nabla _x f (x,\vp)} \nonumber \\
&+ & \frac{m}{\tau \theta}(v \cdot v_\wedge) \intvp{s_1 M(v) f(x,\vp)} 
 -  \frac{m}{\tau \theta}(v \cdot v_\wedge) \intvp{\frac{|\vp|}{|v|}s_2 M(\vp) f(x,v)} \nonumber 
\end{eqnarray}
where (see Remark \ref{CasPartB})
\[
\tcalB = \left (- \frac{^\perp v}{\omega _c}, v_\wedge - \frac{v \cdot \nabla _x \omega _c}{\omega _c ^2 }\;^\perp v  \right ),\;\;s_1(v,\vp) = \ave{s(v,\vp)},\;\;s_2(v,\vp) = \ave{s(v,\vp)\frac{v \cdot \vp}{|v|\;|\vp|}}.
\]
\end{pro}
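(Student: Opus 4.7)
Since $f\in\ker{\cal T}$ depends only on $(x,|v|)$, one has ${^\perp v}\cdot\nabla_v f=0$, and by Lemma \ref{LC} the same holds for $Q(f)$. Consequently the third component of $\tcalB$ (the one proportional to ${^\perp v}$) contributes neither to $Q(\tcalB\cdot\nxv f)$ nor to $\tcalB\cdot\nxv Q(f)$, and it suffices to work with the effective field $(-{^\perp v}/\omega_c,\,v_\wedge)$ throughout.

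Expanding both $Q(\tcalB\cdot\nxv f)$ and $\tcalB\cdot\nxv Q(f)$ using the integral form of $Q$, and distributing $\nabla_x,\nabla_v$ in the second expression over $s(v,\vp)$, $M(v)$ and $f(x,v)$, the diagonal contributions (those where $f$ is evaluated at $v$) cancel between the two sides: the $\frac{{^\perp v}}{\omega_c}\cdot\nabla_x f(x,v)$ pieces cancel because $\nabla_x$ commutes with $Q$ (which is $x$-independent), and the $v_\wedge\cdot\nabla_v f(x,v)$ pieces cancel outright. I then integrate by parts in $\vp$ on the off-diagonal piece $\frac{M(v)}{\tau}\int s\,v_\wedge\cdot\nabla_\vp f(x,\vp)\,d\vp$, using $\nabla_\vp s=-\nabla_v s$ (which follows from $s(v,\vp)=\sigma(|v-\vp|)$); this exactly kills the part of $v_\wedge\cdot\nabla_v Q(f)$ in which $\nabla_v s$ multiplies $M(v)f(x,\vp)$. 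After also using $\nabla_v M(v)=-(m/\theta)vM(v)$, exactly three integrals remain:
\begin{equation*}
\frac{M(v)}{\tau\omega_c}\int s\,({^\perp v}-{^\perp\vp})\cdot\nabla_x f(x,\vp)\,d\vp,\ \ \frac{m}{\tau\theta}(v\cdot v_\wedge)M(v)\int s\,f(x,\vp)\,d\vp,\ \ \frac{f(x,v)}{\tau}\int v_\wedge\cdot\nabla_v s\,M(\vp)\,d\vp.
\end{equation*}

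The last step is angular averaging. Because $f\in\ker{\cal T}$, both $f(x,\vp)$ and $\nabla_x f(x,\vp)$ depend on $\vp$ only through $|\vp|$, and three identities for any radial $h=h(|\vp|)$ suffice: \emph{(a)} $\int s\,h\,d\vp=\int s_1\,h\,d\vp$, by the very definition of $s_1$; \emph{(b)} $\int s\,({^\perp v}\cdot\vp)\,h\,d\vp=0$, since the integrand is odd under the reflection across the line $\R v$ (which preserves $|v-\vp|$ and fixes $v$); \emph{(c)} $\int s\,(v\cdot\vp)\,h\,d\vp=|v|\int s_2\,|\vp|\,h\,d\vp$, by the definition of $s_2$. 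Writing ${^\perp\vp}=\frac{v\cdot\vp}{|v|^2}{^\perp v}-\frac{{^\perp v}\cdot\vp}{|v|^2}v$ in the orthogonal basis $\{v,{^\perp v}\}$ and applying \emph{(a)}--\emph{(c)} converts the first integral into $\frac{1}{\tau}\int\bigl(s_1-\frac{|\vp|}{|v|}s_2\bigr)M(v)\frac{{^\perp v}}{\omega_c}\cdot\nabla_x f(x,\vp)\,d\vp$; the second reduces by \emph{(a)} alone to $\frac{m}{\tau\theta}(v\cdot v_\wedge)\int s_1 M(v)f(x,\vp)\,d\vp$; and for the third, one further integration by parts in $\vp$ gives $\int v_\wedge\cdot\nabla_v s\,M(\vp)\,d\vp=-(m/\theta)\int s\,(v_\wedge\cdot\vp)M(\vp)\,d\vp$, to which the same $\{v,{^\perp v}\}$ decomposition of $\vp$ together with \emph{(b)} and \emph{(c)} applies, yielding $-(m/\theta)(v\cdot v_\wedge)\int\frac{|\vp|}{|v|}s_2\,M(\vp)\,d\vp$. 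Collecting these three contributions yields the stated formula. The only delicate points are the cancellation bookkeeping in the second step and the $^\perp\vp$-decomposition in the third; everything else is routine use of the rotational symmetry built into $s$.
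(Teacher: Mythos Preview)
Your argument is correct and follows essentially the same route as the paper: both proofs exploit $\nabla_v s=-\nabla_{v'}s$ together with an integration by parts in $v'$, and then reduce the remaining integrals via the angular averages $s_1,s_2$. The only organizational differences are that you discard the ${^\perp v}$ component of $\tcalB_v$ at the outset (using $f,Q(f)\in\ker\mathcal{T}$ and Lemma~\ref{LC}), whereas the paper carries it through and kills it as the separate term $T_2$; and you group the computation by ``diagonal versus off-diagonal'' pieces, whereas the paper groups it around the difference $\tcalB(x,v)-\tcalB(x,v')$. Your identities \emph{(a)}--\emph{(c)} are exactly the content of the paper's formula $\ave{s(v,v')(v-v')}=(s_1-\tfrac{|v'|}{|v|}s_2)\,v$.
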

\begin{proof}
Observe that $\nv s + \nvp s = 0$ and therefore, performing integration by parts with respect to $\vp$ yields
\begin{eqnarray}
\tau \tcalB \cdot \nxv Q(f) & = & \intvp{s(v,\vp)\{ M(v) \tcalB (x,v) \cdot \nxvp f(x,\vp) - M(\vp) \tcalB (x,v) \cdot \nxv f(x,v)   \}} \nonumber \\
& + & \tcalB _v (x,v) \cdot \intvp{\{\nv M(v) f(x,\vp) - \nvp M(\vp) f(x,v)\} s(v,\vp)}.\nonumber
\end{eqnarray}
Combining with the equality
\[
\tau Q(\tcalB \cdot \nxv f)  =  \intvp{s(v,\vp)\{M(v) \tcalB(x,\vp) \cdot \nxvp f(x,\vp) - M(\vp)\tcalB (x,v) \cdot \nxv f(x,v)\}}
\]
we deduce that 
\begin{eqnarray}
\label{EquConvert} \tau \tcalB \cdot \nxv Q(f) & - &  \tau Q(\tcalB \cdot \nxv f ) = \intvp{s M(v) ( \tcalB (x,v) - \tcalB (x, \vp)) \cdot \nxvp f (x, \vp)} \nonumber \\
& + & \tcalB _v (x,v) \cdot \intvp{\{\nv M(v) f(x,\vp) - \nvp M(\vp) f(x,v)\} s}.
\end{eqnarray}
We transform the above terms appealing to the symmetry of $f$. Indeed, we have
\begin{eqnarray}
\label{EquConvert1} T_1 & := & \intvp{s(v, \vp) M(v) ( \tcalB _x (x,v) - \tcalB _x (x, \vp)) \cdot \nx f (x, \vp)} \nonumber \\
& = & \intvp{s(v,\vp) M(v)  \frac{^\perp ( \vp - v)}{\oc}  \cdot \nx f (x, \vp)} \nonumber \\ 
& = & \intvp{\frac{M(v)}{\oc} \ave{s(v,\vp) {\;^\perp ( \vp - v)}}   \cdot \nx f (x, \vp)}.
\end{eqnarray}
We introduce the following averaged (with respect to $\vp$) scattering cross sections
\[
s_1(v,\vp) = \ave{s(v,\vp)},\;\;s_2(v,\vp) = \ave{s(v,\vp)\frac{v \cdot \vp}{|v|\;|\vp|}}.
\]
Writing that $\vp = \rp \cos \alpha \;v/|v| - \rp \sin \alpha \;^\perp v /|v|, \rp = |\vp|, r = |v|$ one gets
\[
\vp - v = (\rp \cos \alpha - r ) \frac{v}{|v|} - \rp \sin \alpha \;\frac{^\perp v}{|v|}
\]
\[
s_1 (v,\vp) = \frac{1}{2\pi } \int _0 ^{2\pi} \sigma ( \sqrt{(\rp )^2 + r^2 - 2 r \rp \cos \alpha })\;\mathrm{d}\alpha
\]
\[
s_2 (v,\vp) = \frac{1}{2\pi } \int _0 ^{2\pi} \sigma ( \sqrt{(\rp )^2 + r^2 - 2 r \rp \cos \alpha })\cos \alpha \;\mathrm{d}\alpha
\]
saying that $s_1, s_2$ depend only on $r = |v|, \rp = |\vp|$ and that they are symmetric with respect to $r, \rp$. Notice also that 
\[
\ave{s(v,\vp)\vp} = \frac{1}{2\pi } \int _0 ^{2\pi} \!\!\!\!\!\!\!\sigma ( \sqrt{(\rp )^2 + r^2 - 2 r \rp \cos \alpha })\left [ \rp \cos \alpha \frac{v}{|v|} - \rp \sin \alpha \frac{^\perp v}{|v|} \right ]\;\mathrm{d}\alpha = s_2 |\vp| \frac{v}{|v|}
\]
and
\[
\ave{s(v,\vp) (v - \vp)} = \left ( s_1 (v,\vp) - \frac{|\vp|}{|v|} s_2 (v,\vp) \right ) v.
\]
Therefore, the integral on the last line of \eqref{EquConvert1} writes
\begin{equation}
\label{EquConvert2} T_1 = - \intvp{\left ( s_1 (v,\vp) - \frac{|\vp|}{|v|} s_2 (v,\vp) \right )M(v) \frac{^\perp v}{\oc} \cdot \nabla _x f (x,\vp)}.
\end{equation}
By taking into account that $^\perp \vp \cdot \nvp f(x,\vp) = 0$, we observe that
\begin{eqnarray}
\label{EquConvert3} T_2 & := & \intvp{s(v,\vp) M(v) ( \tcalB _v (x,v) - \tcalB _v (x, \vp) ) \cdot \nvp f(x,\vp) } \nonumber \\
& = & \intvp{s(v,\vp) M(v) \frac{v \cdot \nabla _x \omega _c }{\omega _c ^2}\;^\perp ( \vp - v ) \cdot \nvp f(x,\vp) } \nonumber \\
& = & \intvp{s(v,\vp) M(v) \frac{v \cdot \nabla _x \omega _c }{\omega _c ^2}\;\mathrm{div}_{\vp} \{^\perp ( \vp - v ) f(x,\vp) \}} \nonumber \\
& = & - \intvp{\sigma ^\prime (|v - \vp|) \frac{\vp - v}{|\vp - v|} \cdot \;^\perp ( \vp - v ) f(x,\vp)M(v) \frac{v \cdot \nabla _x \omega _c }{\omega _c ^2}} = 0.
\end{eqnarray}
For the last integral in \eqref{EquConvert} we notice that 
\begin{equation}
\label{EquConvert4} T_3 := \tcalB _v (x,v) \cdot \intvp{\nabla _v M(v) f(x,\vp) s(v,\vp)} = - \frac{m}{\theta} (v \cdot v_\wedge) \intvp{s_1 (v,\vp) M(v) f(x,\vp)}
\end{equation}
and
\begin{eqnarray}
\label{EquConvert5}T_4 & := & - \tcalB _v (x,v) \cdot \intvp{\nvp M(\vp) f(x,v) s(v,\vp)} \nonumber \\
& = & \tcalB _v (x,v) \cdot \intvp{\frac{m}{\theta} \ave{s(v,\vp)\vp} M(\vp) f(x,v) } \nonumber \\
& = & \frac{m}{\theta} (v_\wedge \cdot v) \intvp{s_2(v,\vp) \frac{|\vp|}{|v|} M(\vp) f(x,v) }.
\end{eqnarray}
Finally combining \eqref{EquConvert}, \eqref{EquConvert2}, \eqref{EquConvert3}, \eqref{EquConvert4}, \eqref{EquConvert5} we obtain
\begin{eqnarray}
\tQo (f) & = &   \frac{1}{\tau} \intvp{\left ( s_1 (v,\vp) - \frac{|\vp|}{|v|} s_2 (v,\vp) \right )M(v) v \cdot \left \{  \frac{m}{\theta} v_\wedge f(x,\vp) - \frac{^\perp \nabla _x f (x,\vp)}{\oc} \right \}} \nonumber \\
& + & \frac{1}{\tau} \intvp{s_2 (v,\vp)  \frac{|\vp|}{|v|} \frac{m}{\theta} (v \cdot v_\wedge) \{ M(v) f(x,\vp) - M(\vp) f(x,v) \}} \nonumber \\
& = & \frac{1}{\tau} \intvp{\left ( s_1 (v,\vp) - \frac{|\vp|}{|v|} s_2 (v,\vp) \right )M(v) F(x,\vp)  \frac{^\perp v}{\oc} \cdot \nabla _x \left (\frac{f(x,\vp)}{F(x,\vp)}   \right )} \nonumber \\
& + &\frac{1}{\tau} \intvp{s_2 (v,\vp)  \frac{|\vp|}{|v|} \frac{m}{\theta} (v \cdot v_\wedge) M(v)F(x,\vp)\left \{ \frac{f(x,\vp)}{F(x,\vp)} - \frac{ f(x,v)}{F(x,v)} \right \}}\nonumber
\end{eqnarray}
where $F(x,v) = M(v) \exp ( - q \phi (x) /\theta)$ is the equilibrium of \eqref{Equ6}, $E(x) = - \nabla _x \phi$.
\end{proof}
Notice that the operator $\tQo$ has the following properties\\
\begin{enumerate}
\item the equilibrium $F$ belongs to the kernel of $\tQo$, since $\tilde{{\cal B}} \cdot \nxv F = 0$ and $Q(F) = 0$
\[
\tQo (F) = Q(\tilde{{\cal B}} \cdot \nxv F) - \tilde{{\cal B}} \cdot \nxv Q(F) = 0.
\]
\item $\tQo$ maps constant functions along the flow of $b^0$ to zero average functions since, for any smooth function $f \in \ker {\cal T}$, $\tilde{{\cal B}} \cdot \nxv  f \in \ker \ave{\cdot}$, $Q(f) \in \ker {\cal T}$ cf. Lemma \ref{LC} and therefore
\[
Q(\tcalB \cdot \nxv f) \in \ker \ave{\cdot},\;\;\tilde{{\cal B}} \cdot \nxv Q(f) \in \ker \ave{\cdot}.
\] 
\item In particular $\tQo$ satisfies the local mass conservation {\it i.e.,} for any smooth function $f \in \ker {\cal T}$ we have $\tQo (f) \in \ker \ave{\cdot} $ and thus
\[
\intv{\tQo (f) } = \intv{\ave{\tQo (f)}} = 0.
\]
\item $\tQo$ is symmetric with respect to the scalar product of $\ltF = \ltFvx$. Indeed, for any smooth $f,g \in \ker {\cal T}$ we have
\begin{eqnarray}
(\tQo (f),g) _\ltF & = & \intxv{Q(\tcalB \cdot \nxv f) \frac{g}{F}} - \intxv{\tcalB \cdot \nxv Q(f) \frac{g}{F}} \nonumber \\
& = & (Q(\tcalB \cdot \nxv f),g)_\ltF + (Q(f), \tcalB \cdot \nxv g)_\ltF \nonumber \\
& = & (\tcalB \cdot \nxv f, Q(g))_\ltF + (Q(f), \tcalB \cdot \nxv g) _\ltF \nonumber \\
& = & (\tQo (g), f) _\ltF.\nonumber
\end{eqnarray}
\item For any smooth $f \in \ker {\cal T}$ we have $(\tQo (f), f)_\ltF = 0$ since $\tQo (f) \in \ker \ave{\cdot}$, $f/F \in \ker {\cal T}$.
\end{enumerate}

\begin{remark}
When the scattering cross section is constant, we have $s_1 = s, s_2 = 0$ and the expression of $\tQo$ becomes
\[
\tQo (f) = \frac{s}{\tau} M(v) \rho _F (x) \frac{^\perp v}{\oc} \cdot \nabla _x \left (\frac{\rho _f}{\rho _F}  \right )
\]
where $\rho _f (x)= \intvp{f(x,\vp)}$, $\rho _F (x)= \intvp{F(x,\vp)} = \exp ( - q \phi (x) /\theta)$.
\end{remark}

\section{Asymptotic models for Boltzmann equation}
\label{LimBolEqu}
\noindent

This section is devoted to the derivation of the limit model for \eqref{Equ6} when $\eps $ becomes very small. At the formal level, we perform our analysis starting from the expansion $\fe = f + \eps f^1 + \eps ^2 f^2 + ...$. We are interested in first order but also second order asymptotic models. We present weak and strong convergence results.

\subsection{First order model}
\label{FirOrdMod}
\noindent

We investigate the asymptotic behaviour of the linear Boltzmann problem \eqref{Equ6}, \eqref{Equ2} when $\eps \searrow 0 $. At least formally, the leading order term $f$ satisfies \eqref{Equ7} and the constraint ${\cal T}f(t) = 0, t \in \R_+$. By Proposition \ref{TransportProp} we know that $\ran {\cal T} = \ker \ave{\cdot}$ and therefore \eqref{Equ7} is equivalent to
\[
\ave{\partial _t f + v \cdot \nabla _x f + \frac{q}{m} E \cdot \nabla _v f} = \ave{Q(f)}.
\]
It is easily seen that $\ave{\partial _t f } = \partial _t \ave{f} = \partial _t f$ and by Corollary \ref{ColAve} we know that $\ave{Q(f)} = Q(\ave{f}) = Q(f)$. Moreover, there is a function $g$ such that $f(t,x,v) = g(t, x, r = |v|)$ and therefore
\[
\ave{v \cdot \nabla _x f } = \ave{v} \cdot \nabla _x g = 0,\;\;\ave{E\cdot \nabla _v f } = \ave{E \cdot \frac{v}{|v|} \;\partial _r g } = \frac{\ave{v}}{|v|} \cdot E \;\partial _r g = 0
\]
since $\ave{v} = 0$.
We obtain the limit model
\[
\partial _t f = Q(f),\;\;{\cal T } f = 0
\]
which can be justified rigorously at least under the confinement potential hypothesis
\begin{equation}
E(x) = - \nabla _x \phi (x),\;\;\intx{\exp \left ( - q\frac{\phi (x)}{\theta} \right ) } <+\infty. \nonumber 
\end{equation}
Indeed, in this case $F(x,v) = M(v) \exp \left ( - q\phi (x)/\theta \right ) $ belongs to $\ltF$ and solves \eqref{Equ6} for any $\eps >0$. Notice that $Q_\pm$ are also bounded from $\ltF$ to $\ltF$. It is well know that for any convex function $H$, the relative entropy 
\[
{\cal H} ^\eps (t) = \intxv{F(x,v) H \left (\frac{\fe (t,x,v)}{F(x,v)}   \right )}
\]
satisfies
\begin{equation}
\label{Equ50} \frac{d{\cal H} ^\eps}{dt}  + {\cal D} ^\eps (t) = 0,\;\;t \in \R_+
\end{equation}
where the dissipative term ${\cal D} ^\eps$ is given by
\begin{equation}
\label{Equ51} {\cal D} ^\eps (t)  =  \frac{1}{\tau} \intxv{\intvp{s(v,\vp) M(v) F(x,\vp) D\left (\frac{\fe (t,x,v)}{F(x,v)}, \frac{\fe (t,x,\vp)}{F(x,\vp)}\right )}}
\end{equation}
where
\[
D(u, u^\prime) = H(u^\prime) - H(u) - (u^\prime - u) H^\prime (u) \geq 0,\;\;u, u^\prime \in \R. 
\]
Assuming that the initial condition satisfies 
\begin{equation}
\label{Equ52}
\fin \geq 0, \fin \in \ltF
\end{equation}
one gets by \eqref{Equ50} 
\[
\intxv{\frac{(\fe (t,x,v))^2}{F(x,v)}} \leq \intxv{\frac{(\fin (x,v))^2}{F(x,v)}},\;\;\eps >0
\]
which allows us to pass to the limit, at least weakly, in \eqref{Equ6}.
\begin{proof} (of Theorem \ref{WeakConv})
We have the uniform bound
\[
\supe \|\fe \| _\liltF \leq \|\fin \|_\ltF
\]
and we can assume (after extraction of a sequence $\epsk \searrow 0$)
\[
\limk \fek = f,\;\;\mbox{ weakly } \star \mbox{ in } \;\liltF.
\]
Using the weak formulation \eqref{Equ6} it is easily seen that the weak limit $f$ satisfies the constraint ${\cal T}f = 0$. Taking now test functions of the form $\eta (t) \varphi (x,v)$, $\eta \in C^1_c (\R_+)$, $\varphi \in C^1 _c ( \R^2 \times \R^2) \cap \ker {\cal T}$ (the constraint $\varphi \in \ker {\cal T}$ is imposed in order to remove the singular term $\frac{1}{\eps} {\cal T} \fe$ in \eqref{Equ6}) we obtain
\begin{eqnarray}
\label{Equ53} - \inttxv{\fek \left ( \eta ^\prime \varphi + \eta v \cdot \nabla _x \varphi + \eta \frac{q}{m} E \cdot \nabla _v \varphi \right ) } \!\!\!& -& \!\!\! \intxv{\fin \eta (0) \varphi }  \\
\!\!\!& = &\!\!\! \inttxv{Q(\fek) \eta \varphi }.\nonumber
\end{eqnarray}
By the symmetry of the collision operator one gets, noticing that $\eta Q(F\varphi ) \in L^1(\R_+;\ltF)$
\begin{eqnarray}
\limk \inttxv{Q(\fek)\eta \varphi } \!\!\!& = &\!\!\! \limk \int _{\R_+} (\fek (t),\eta (t) Q(F \varphi))_{\ltF} \;\mathrm{d}t \nonumber \\
\!\!\!& = &\!\!\! \int _{\R_+} (f (t),\eta (t) Q(F \varphi))_{\ltF} \;\mathrm{d}t \nonumber \\
\!\!\!& = &\!\!\! \inttxv{Q(f)\eta \varphi }.\nonumber
\end{eqnarray}
Passing to the limit with respect to $k \to +\infty$ in \eqref{Equ53} leads to the weak formulation
\begin{eqnarray}
\label{Equ54} - \inttxv{f \left ( \eta ^\prime \varphi + \eta v \cdot \nabla _x \varphi + \eta \frac{q}{m} E \cdot \nabla _v \varphi \right ) } \!\!\!& -& \!\!\! \intxv{\fin \eta (0) \varphi }  \\
\!\!\!& = &\!\!\! \inttxv{Q(f) \eta \varphi }\nonumber
\end{eqnarray}
for $\eta \in C^1 _c(\R_+)$, $\varphi \in C^1 _c(\R^2 \times \R^2) \cap \ker {\cal T}$. Writing $\varphi (x,v) = \psi (x, |v|)$ we deduce that $\ave{v \cdot \nabla _x \varphi } = 0$, $\ave{E \cdot \nabla _v \varphi } = 0$ and therefore, since ${\cal T}f = 0$, one gets
\[
\inttxv{f\left (  \eta v \cdot \nabla _x \varphi + \eta \frac{q}{m} E \cdot \nabla _v \varphi \right ) } = 0.
\]
Finally the formulation \eqref{Equ54} reduces to
\[
- \inttxv{f \partial _t (\eta \varphi) } - \intxv{\ave{\fin} \eta (0) \varphi } =  \inttxv{Q(f) \eta \varphi }\nonumber
\]
for test functions satisfying the constraint ${\cal T} (\eta \varphi ) = 0$ at any time $t \in \R_+$. Since $f, \ave{\fin}$ and $Q(f)$ also satisfy this constraint it is easily seen that $f$ solves weakly \eqref{Equ55}. By the uniqueness of the solution for \eqref{Equ55} we deduce that all the family $(\fe)_{\eps >0}$ converges weakly $\star$ in $\liltF$ towards $f$. It remains to justify that the constraint ${\cal T}f(t) =0$ is automatically satisfied at any time $t>0$. Indeed, for any $\alpha >0$ we have, cf. Lemma \ref{LC}
\[
\partial _t f_\alpha  = (\partial _t f) (x, R_\alpha v) = (Qf)(x,R_\alpha v) = (Qf_\alpha) (x,v).
\]
Since $f_\alpha (0,x,v) = \ave{\fin} (x,R_\alpha v) = \ave{\fin}(x,v) = f(0,x,v)$ we deduce that both $f$ and $f_\alpha$ solve \eqref{Equ55} and, by the uniqueness of the solution, one gets $f_\alpha (t) = f(t)$ for any $\alpha \in \R$, saying that ${\cal T} f(t) = 0$ at any time $t >0$.
\end{proof}
\bigskip
Under appropriate smoothness hypotheses on the initial condition $\fin$ and the electro-magnetic field, it is possible to obtain strong convergence results. Actually we can show that $\fe = f + {\cal O} (\eps)$ strongly, which justifies the first term in the expansion $\fe = f + \eps f^1 + ...\;$. We only sketch the main lines of the arguments and skip the standard details. The main idea is to introduce a first order correction term. Assume for the moment that the dominant term $f$ is smooth enough (with respect to all variables). Since
\[
\ave{\partial _t f + v \cdot \nabla _x f + \frac{q}{m} E \cdot \nabla _v f - Q(f) } = 0
\]
we deduce by Proposition \ref{TransportProp} that there is a unique function $f^1$ such that at any time $t>0$
\begin{equation}
\label{Equ59} \partial _t f + v \cdot \nabla _x f + \frac{q}{m} E \cdot \nabla _v f + {\cal T} f^1 =  Q(f),\;\;\ave{f^1} = 0.
\end{equation}
Combining with \eqref{Equ6} we obtain
\begin{equation}
\label{Equ57} 
\left (\partial _t  + v \cdot \nabla _x  + \frac{q}{m} E \cdot \nabla _v + \frac{\cal T}{\eps}  \right )(\fe -  f  - \eps f^1 )  = Q(\fe -  f  - \eps f^1 ) + \eps {\cal R}(f^1)
\end{equation}
where 
\begin{equation}
\label{Equ60}
{\cal R}(f^1) = -  \left (\partial _t  + v \cdot \nabla _x  + \frac{q}{m} E \cdot \nabla _v \right ) f^1 + Q(f^1).
\end{equation}
Notice that ${\cal R}(f^1)$ is of order of $1$, provided that $f^1$ is smooth enough (we will see later on how we can get regularity for $f^1$ from the regularity of $f$). Assume for the moment that $f^1$ is smooth enough. In this case $\eps {\cal R}(f^1)$ will be of order of $\eps$. Using again the relative entropy dissipation (similar to \eqref{Equ50}, since now we have an extra term $\eps {\cal R}(f^1)$) or multiplying \eqref{Equ57} by $(\fe - f - \eps f^1 )/F$, where $F$ is the equilibrium of \eqref{Equ6}, one gets
\begin{eqnarray}
\frac{1}{2}\frac{\mathrm{d}}{\mathrm{d}t} \| (\fe - f - \eps f^1)(t)\|^2 _\ltF & \leq & \eps \intxv{{\cal R }(f^1)  \frac{\fe - f - \eps f^1}{F}} \nonumber \\
& \leq & \eps \|{\cal R}(f^1 (t))\|_\ltF \| (\fe - f - \eps f^1)(t) \|_\ltF \nonumber
\end{eqnarray}
implying that 
\[
\frac{\mathrm{d}}{\mathrm{d}t} \| (\fe - f - \eps f^1)(t)\| _\ltF \leq \eps \|{\cal R}(f^1 (t))\|_\ltF. 
\]
Finally one gets
\[
\| (\fe - f - \eps f^1)(t)\| _\ltF \leq \|\fin - \ave{\fin} - \eps f^1 (0)\|_\ltF + \eps \int _0 ^t \|{\cal R} (f^1(s))\|_\ltF \;\mathrm{d}s
\]
and therefore, for any $T >0$, there is a constant $C_T$ depending on $\|f^1\|_\liltF$ and $\|{\cal R} (f^1)\|_{L^1(0,T;\ltF)}$ such that 
\[
\| (\fe - f)(t)\| _\ltF \leq C_T \eps,\;\;t \in [0,T],\;\;\eps >0
\]
provided that the initial condition is well prepared {\it i.e.,} ${\cal T} \fin = 0$. It remains to justify the smoothness of $f$ and $f^1$. 
\begin{pro}
\label{Smoothf} Assume that the initial condition satisfies $\fin, |v|\fin \in \ltF$, ${\cal T} \fin = 0$. Then the solution of \eqref{Equ55} satisfies $ f \in \liltF,\;\;vf  \in \liltF $.
\end{pro}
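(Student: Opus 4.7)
The plan is to work directly with the limit equation $\partial_t f = Q(f)$, $f(0,\cdot) = \ave{\fin}$, interpreted pointwise in $x$ as the space-homogeneous linear Boltzmann equation in $v$. The factorization $F(x,v) = M(v) e^{-q\phi(x)/\theta}$ means that every $L^2(\mathrm{d}v/M)$ estimate at fixed $x$ upgrades to an $\ltF$ estimate after multiplying by $e^{q\phi(x)/\theta}$ and integrating in $x$. The $\ltF$ bound is immediate: testing the equation against $f/M$ and symmetrizing the collision integral as in \eqref{Equ51} gives
\[
\intv{Q(f) \frac{f}{M}} = -\frac{1}{2\tau}\int_{\R^2 \times \R^2} s(v,\vp) M(v) M(\vp) \left( \frac{f(\vp)}{M(\vp)} - \frac{f(v)}{M(v)}\right)^{2} \mathrm{d}\vp \mathrm{d}v \leq 0,
\]
so $\|f(t)\|_\ltF \leq \|\ave{\fin}\|_\ltF \leq \|\fin\|_\ltF$, where the last inequality follows from Jensen's inequality combined with the rotation invariance of the weight $1/F$ (cf. Proposition \ref{AverageProp}). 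This already yields $f \in \liltF$.

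For $vf$, I would repeat the computation with the test function $|v|^{2} f/M$. Setting $h = f/M$, $r = |v|$, $\rp = |\vp|$, the same symmetrization produces
\[
\intv{Q(f) \frac{|v|^{2} f}{M}} = \frac{1}{2\tau}\int_{\R^2 \times \R^2} s(v,\vp) M(v) M(\vp) (h(\vp) - h(v))(r^{2} h(v) - (\rp)^{2} h(\vp)) \mathrm{d}\vp \mathrm{d}v,
\]
and the algebraic identity $(b - a)(r^{2} a - (\rp)^{2} b) = -\tfrac{r^{2} + (\rp)^{2}}{2}(b - a)^{2} + \tfrac{r^{2} - (\rp)^{2}}{2}(b^{2} - a^{2})$ splits this into a nonpositive dissipation and a cross term. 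The cross term, after symmetrization in $(v,\vp)$, reduces to $\tfrac{1}{2\tau}\intvp{K(\vp) M(\vp) h(\vp)^{2}}$ with $K(\vp) = \intv{s(v,\vp) M(v) |v|^{2}} - (\rp)^{2} \intv{s(v,\vp) M(v)}$. The bounds $s_0 \leq s \leq S_0$, together with $\intv{M} = 1$ and $\intv{|v|^{2} M} = 2\theta/m$, give $K(\vp) \leq 2\theta S_0/m - s_0 (\rp)^{2}$. Integrating the resulting pointwise-in-$x$ inequality against $e^{q\phi(x)/\theta}\mathrm{d}x$ and using the first step, $y(t) := \|vf(t)\|^{2}_\ltF$ satisfies
\[
y'(t) \leq -\frac{s_0}{\tau}\, y(t) + \frac{2\theta S_0}{m\tau}\,\|\fin\|^{2}_\ltF,
\]
hence $y(t) \leq \max\bigl(y(0),\, \tfrac{2\theta S_0}{m s_0}\|\fin\|^{2}_\ltF\bigr)$ uniformly in $t \geq 0$. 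Since $y(0) = \|v\ave{\fin}\|^{2}_\ltF \leq \|v\fin\|^{2}_\ltF$ by the same Jensen argument (using that $|v|^{2}$ and $M$ are rotation invariant), this gives $vf \in \liltF$.

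The main obstacle is handling the cross term $(r^{2} - (\rp)^{2})(b^{2} - a^{2})$: it is not sign-definite, and a crude estimate $|r^{2} - (\rp)^{2}| \leq r^{2} + (\rp)^{2}$ would only yield a Gronwall inequality of the form $y' \leq Cy + C$, producing exponential-in-time growth. The saving feature is that the $(v,\vp)$-symmetrization of this term leaves behind the kernel $K(\vp)$, and the strict positivity $s \geq s_0 > 0$ assumed in \eqref{Equ4} makes the $(\rp)^{2}$-coefficient of $K$ genuinely negative, supplying the absorbing term $-s_0 y/\tau$ that closes the Gronwall estimate uniformly in $t \geq 0$.
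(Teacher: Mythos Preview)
Your argument is correct, but it differs from the paper's. The paper does not run an energy estimate with the weighted test function $|v|^2 f/M$; instead it multiplies the equation $\partial_t f = Q(f)$ by $v$ and rewrites it as a linear ODE for $vf$ at each fixed $(x,v)$:
\[
\partial_t (vf) + \Bigl(\tfrac{1}{\tau}\!\intvp{s(v,\vp)M(\vp)}\Bigr) vf \;=\; \tfrac{1}{\tau}\,v M(v)\!\intvp{s(v,\vp)f(t,x,\vp)}.
\]
The damping coefficient is $\geq s_0/\tau$, and the source is bounded via Cauchy--Schwarz by $\tfrac{S_0}{\tau}|v|M(v)\|\fin(x,\cdot)\|_{\ltm}$ using the first step. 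Solving by Duhamel and then taking the $\ltF$ norm gives directly
\[
\|vf(t)\|_\ltF \leq \|v\fin\|_\ltF + \tfrac{S_0}{s_0}\bigl(\textstyle\intvp{|\vp|^2 M(\vp)}\bigr)^{1/2}\|\fin\|_\ltF.
\]
Your symmetrization/energy route reaches an equivalent uniform bound; the paper's ODE argument is somewhat shorter (no algebraic splitting of the kernel is needed), while your approach is more systematic and would extend more naturally to higher-order moments. One small remark: since the hypothesis is ${\cal T}\fin = 0$, one has $\ave{\fin}=\fin$, so the Jensen step you invoke at $t=0$ is not actually needed.
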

\begin{proof}
We know that $\|f(t) \|_\ltF \leq \|\fin \|_\ltF$. Multiplying the equation in \eqref{Equ55} by $v$ one gets
\[
\partial _t (v f) +v f(t,x,v) \frac{1}{\tau} \intvp{s(v,\vp) M(\vp)} =    v M(v) \frac{1}{\tau} \intvp{s(v,\vp) f(t,x,\vp)}.
\]
Solving the above differential equation with respect to $vf$ and estimating the right hand side by using
\[
\intvp{s(v,\vp) f(t,x,\vp)} \leq S_0 \;\|\fin \|_\ltm
\]
lead to the bound
\[
\|v f(t)\|_\ltF \leq \|v \fin \|_\ltF + {\frac{S_0}{s_0}} \left ( \intvp{|\vp |^2 M(\vp)}\right ) ^{1/2}  \|\fin \|_\ltF,\;\;t >0.
\]
\end{proof}
\begin{pro}
\label{SmoothfBis} Assume that the function $\sigma $, introduced in \eqref{Equ47}, is Lipschitz continuous and that the initial condition satisfies 
$$
{\cal T} \fin = 0,\;\;(1 + |v|) ( |\fin| + |\nabla _x \fin| + |\nabla _v \fin | ) \in \ltF.
$$
Then the solution of \eqref{Equ55} satisfies 
$$
(1 + |v|)( |f| + |\partial  _t f| + |\nabla _x f| +  |\nabla _v f| )  \in \liltF.
$$
Moreover, if $\sigma \in W^{2,\infty}$ and $(1 + |v|^2) \nabla ^2 _{x,v} \fin \in \ltF$ then $(1 + |v|^2) \nabla ^2 _{t,x,v} f \in \liltF$.
\end{pro}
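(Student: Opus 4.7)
The key observation is that, in contrast with the $\eps$-equation \eqref{Equ6}, the limit equation \eqref{Equ55} is spatially local and does not involve the electro-magnetic field. Hence $Q$ commutes with $\partial _t$ and with $\nx$, while the commutator with $\nv$ can be computed by a single integration by parts. The whole proof reduces to iterating the ODE-style estimate of Proposition \ref{Smoothf} once per partial derivative, the $v$-weights being forced by the velocity derivatives of the Maxwellian that appear in the commutators.

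First I would handle $\partial _t f$ and $\nx f$. Since $Q$ is time-independent, $\partial _t f$ solves $\partial _t (\partial _t f) = Q(\partial _t f)$ with initial datum $Q(\ave{\fin})$, which belongs to $\ltF$ and satisfies moreover $(1+|v|)\,Q(\ave{\fin}) \in \ltF$ thanks to the hypothesis and Proposition \ref{LinBolOpe}. Proposition \ref{Smoothf} applied verbatim yields the bounds for $\partial _t f$ and $|v|\partial _t f$ in $\liltF$. Similarly, the operators $\nx$ and $\ave{\cdot}$ commute, so $\nx f$ solves the same equation with initial datum $\ave{\nx \fin}$, and the identical estimates apply.

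The main work lies in the commutator $[\nv,Q]$. Since $s(v,\vp) = \sigma(|v-\vp|)$ we have $\nv s + \nvp s = 0$, so an integration by parts on the gain term of $Q(\nv f)$ yields
\[
[\nv,Q](f)(x,v) \,=\, \nv M(v)\cdot \frac{1}{\tau}\intvp{s(v,\vp)\, f(x,\vp)} \;-\; f(x,v)\,\nv \nu(v), \qquad \nu(v) = \frac{1}{\tau}\intvp{s(v,\vp)\,M(\vp)}.
\]
The Lipschitz assumption on $\sigma$ gives $\|\nv \nu\|_\infty \leq \|\sigma^\prime\|_\infty / \tau$, so the second term is controlled in $\ltF$ by $C\|f\|_\ltF$. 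In the first term, the factor $\nv M(v) = -(m/\theta)\,v\,M(v)$ produces exactly one $|v|$-growth which, combined with the Cauchy-Schwarz bound on $\tau^{-1}\intvp{s\,f}$, yields an $\ltF$-control involving $\||v|f\|_\ltF$ -- precisely the weighted bound supplied by Proposition \ref{Smoothf} under the hypothesis $(1+|v|)\fin \in \ltF$. Writing $\partial _t \nv f = Q(\nv f) + [\nv,Q](f)$ and applying the Duhamel/ODE argument of Proposition \ref{Smoothf} closes the estimate on $\nv f$ in $\liltF$; multiplying by $v$ and repeating the same computation closes the weighted estimate on $|v|\nv f$.

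For the second-order statement I would iterate the scheme. Differentiating once more produces commutators built from $\nabla _v ^2 s$ and $\nabla _v ^2 M(v)$: the former is bounded thanks to $\sigma \in W^{2,\infty}$, the latter contributes a $|v|^2 M(v)$-type growth. Plugging the first-order bounds on $\nabla _{x,v} f$ and $|v|\nabla _{x,v} f$ as forcing terms into the Duhamel formulae for the second-order equations, and using the hypothesis $(1+|v|^2)|\nabla _{x,v} ^2 \fin| \in \ltF$, yields $(1+|v|^2) \nabla _{t,x,v} ^2 f \in \liltF$. The only genuine difficulty throughout is the propagation of $v$-weights forced by the Maxwellian in the commutators; once these weights are properly balanced against the hypotheses on $\fin$, the energy argument of Proposition \ref{Smoothf} handles the rest.
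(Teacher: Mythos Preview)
Your proposal is correct and follows essentially the same strategy as the paper: exploit that \eqref{Equ55} is a spatially-local ODE in $\ltF$, commute the derivatives through $Q$, and re-apply the weighted estimate of Proposition~\ref{Smoothf}. The only organizational differences are that (i) the paper handles $\partial_t f$ more directly via $\partial_t f = Q(f)$ and the operator bound $\|Q\|\le 2S_0/\tau$, rather than viewing $\partial_t f$ as a new solution of the same equation, and (ii) the paper exploits the constraint ${\cal T}f=0$ (i.e.\ ${}^\perp v\cdot\nabla_v f=0$) to reduce the velocity-derivative estimate to the single radial direction $\tfrac{v}{|v|}\cdot\nabla_v f$, writing the ODE for that scalar quantity explicitly rather than computing the full commutator $[\nabla_v,Q]$. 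Your integration-by-parts identity $\nabla_v s+\nabla_{v'}s=0$ gives a cleaner commutator formula, while the paper keeps the $\nabla_v s$ term and bounds it directly by $\|\sigma'\|_{L^\infty}$; both routes lead to the same estimates with the same hypotheses.
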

\begin{proof}
By Proposition \ref{Smoothf} we know that $(1 + |v|) f \in \liltF$ and since the operators $Q_\pm$ remain bounded in $\ltF$ (see Proposition \ref{LinBolOpe})
\[
\|Q(f)\|_\ltF \leq  \frac{2S_0}{\tau} \|f\|_\ltF
\]
one gets $\|\partial _t f \|_\liltF \leq  \frac{2S_0}{\tau} \|\fin\|_\ltF$. Observe that 
\[
\intvp{s(v,\vp)f(t,x,\vp)} \leq \left ( \intvp{s(v,\vp)^2 M(\vp)}\right ) ^{1/2} \|f(t,x,\cdot)\|_\ltm \leq S_0 \|\fin (x,\cdot) \|_\ltm.
\]
Therefore we obtain
\[
\|v \partial _t f \|_\ltF = \|v Q(f) \|_\ltF \leq \frac{S_0}{\tau} \left (\intvp{|\vp|^2 M(\vp)}  \right ) ^{1/2} \;\|\fin\|_\ltF + \frac{S_0}{\tau} \|v f(t) \|_\ltF
\]
saying that $v \partial _t f \in \liltF$. Since $Q$ is commuting with the space derivatives one gets
\[
\partial _t (\nabla _x f) = Q(\nabla _x f),\;\;(t,x,v) \in \R_+ ^\star \times \R ^2 \times \R^2,\;\;\nabla _x f(0) = \nabla _x \fin \in \ker {\cal T}
\]
and therefore, by Proposition \ref{Smoothf} we obtain $(1 + |v|)|\nabla _x f | \in \liltF$. It remains to estimate the velocity derivatives. As we already know that $^\perp v \cdot \nabla _v f = 0$ let us analyze the derivative along $v$. We have
\begin{equation}
\label{EquDerVit}
\partial _t \left ( \frac{v}{|v|} \cdot \nabla _v f\right ) + \frac{1}{\tau} \left ( \frac{v}{|v|} \cdot \nabla _v f\right ) \intvp{s(v,\vp)M(\vp)} = \frac{1}{\tau} g 
\end{equation}
where
\[
g = \intvp{\frac{v}{|v|} \cdot \nabla _v s\{ M(v) f(t,x,\vp) - M(\vp) f(t,x,v)\}} + \frac{v}{|v|}\cdot \nabla _v M(v) \intvp{\!\!\!s (v,\vp) f(t,x,\vp)}.
\]
Observe that 
\[
\|g\|_\ltF \leq 2 \|\sigma ^\prime \|_{L^\infty} \|\fin \|_\ltF + S_0 \|\fin \|_\ltF  \left \| \frac{v}{|v|} \cdot \nabla _v M \right \|_\ltm
\]
and finally, solving \eqref{EquDerVit} with respect to $\frac{v}{|v|} \cdot \nabla _v f$ yields
\[
\left \| \frac{v}{|v|} \cdot \nabla _v f(t) \right \| _\ltF \leq \left \| \frac{v}{|v|} \cdot \nabla _v \fin \right \| _\ltF + \frac{1}{s_0} \left (2 \|\sigma ^{\prime}\|_{L^\infty} + S_0 \left \| \frac{v}{|v|} \cdot \nabla _v M \right \|_\ltm   \right ) \|\fin \|_\ltF.
\]
Therefore $\nabla _v f \in \liltF$. Starting from the equality $\partial _t ( v \cdot \nabla _v f) = v \cdot \nabla _v Q(f)$ we obtain the differential equation
\[
\partial _t ( v \cdot \nabla _v f) + \frac{1}{\tau} ( v \cdot \nabla _v f) \intvp{s(v,\vp) M(\vp)} = \frac{1}{\tau } h
\]
where
\[
h = \intvp{v \cdot \nabla _v s \{M(v) f(t,x,\vp) - M(\vp) f(t,x,v)\}} + v \cdot \nabla _v M \intvp{s(v,\vp) f(t,x,\vp)}.
\]
Observing that 
\[
\|h(t)\|_\ltF \leq \|\sigma ^\prime\|_{L^\infty} \|v M\|_\ltm \|\fin \|_\ltF + \|\sigma ^\prime\|_{L^\infty} \|vf(t) \|_\ltF + S_0 \|v \cdot \nabla _v M \|_\ltm \|\fin \|_\ltF
\]
we deduce that $\|v \cdot \nabla _v f(t) \|_\ltF \leq \|v \cdot \nabla _v \fin \|_\ltF + 1/s_0 \|h\|_{\liltF}$ which says that $|v|\;|\nabla _v f| = |v \cdot \nabla _v f| \in \liltF$. The last statement follows similarly.
\end{proof}
\begin{pro}
\label{SmoothfOne}
Assume that the hypotheses of Propositions \ref{TransportProp} and  \ref{SmoothfBis} hold true and that the electro-magnetic field $(E(x), B(x))$ belongs to $W^{1,\infty} (\R^2)$. Then we have
\[
f^1 \in \liltF,\;\;{\cal R}(f^1) \in \liltF 
\]
where $f^1$ is defined by \eqref{Equ59} and ${\cal R}(f^1) $ by \eqref{Equ60}.
\end{pro}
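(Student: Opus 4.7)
The strategy is to read off $f^{1}$ as the solution of ${\cal T} f^{1}=S$, where $S := Q(f) - \partial_t f - v\cdot\nabla_x f - (q/m)E\cdot\nabla_v f$ has zero average (since $f\in\ker{\cal T}$ gives $\ave{v\cdot\nabla_x f}=\ave{E\cdot\nabla_v f}=0$, $\ave{\partial_t f}=\partial_t f$, $\ave{Q(f)}=Q(f)$), and apply Proposition \ref{TransportProp} together with its natural extension to $\ltF$. The key observation is that $\sqrt{F}\in\ker{\cal T}$ (because $F(x,v)=M(v)\exp(-q\phi(x)/\theta)$ with $M$ radial in $v$), hence the substitution $g=u/\sqrt{F}$ converts $\ltF$-norms into $L^{2}$-norms and preserves both ${\cal T}g$ and the zero-average constraint. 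The Poincaré inequality of Proposition \ref{TransportProp} therefore yields $\|u\|_{\ltF}\le \frac{2\pi}{|\omega_0|}\|{\cal T}u\|_{\ltF}$ for every $u\in\ker\ave{\cdot}\cap\D({\cal T})$.

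Applying this to $f^{1}$ gives $\|f^{1}(t)\|_{\ltF}\le C\|S(t)\|_{\ltF}$, and each term in $S$ is controlled uniformly in $t$ using Proposition \ref{SmoothfBis}: $\|Q(f)\|_{\ltF}\le (2S_0/\tau)\|f\|_{\ltF}$, $\|\partial_t f\|_{\liltF}<\infty$, $\|v\cdot\nabla_x f\|_{\ltF}\le \|(1+|v|)\nabla_x f\|_{\ltF}$, and $\|E\cdot\nabla_v f\|_{\ltF}\le \|E\|_{L^{\infty}}\|\nabla_v f\|_{\ltF}$. This yields $f^{1}\in\liltF$.

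To obtain ${\cal R}(f^{1})\in\liltF$, I would differentiate ${\cal T}f^{1}=S$ and use commutators to produce new equations of the form ${\cal T}w=R$, with $\ave{w}=0$ and $R$ of zero average, to which Poincaré again applies. For time differentiation, $[\partial_t,{\cal T}]=0$ and $[\partial_t,\ave{\cdot}]=0$ give $\|\partial_t f^{1}\|_{\ltF}\le C\|\partial_t S\|_{\ltF}$, bounded by the second-order estimates $\|(1+|v|^{2})\nabla^{2}_{t,x,v}f\|_{\liltF}<\infty$ of Proposition \ref{SmoothfBis}. For space differentiation, the commutator $[\partial_{x_i},{\cal T}]=(\partial_{x_i}\oc/\oc){\cal T}$ is bounded since $B\in W^{1,\infty}$ and $\inf B>0$; averaging still commutes with $\partial_{x_i}$, so Poincaré gives
\[
\|\partial_{x_i}f^{1}\|_{\ltF}\le C\bigl(\|\partial_{x_i}S\|_{\ltF}+\|S\|_{\ltF}\bigr).
\]
Multiplying by $|v|\in\ker{\cal T}$ (which commutes with ${\cal T}$ and $\ave{\cdot}$) gives the weighted version $\||v|\nabla_x f^{1}\|_{\ltF}\le C$, hence $v\cdot\nabla_x f^{1}\in\liltF$, using again the second-order bounds on $f$.

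The main obstacle is the velocity derivative $\nabla_v f^{1}$, because $[\partial_{v_i},{\cal T}]$ returns another first-order velocity derivative (computing in coordinates, $[\partial_{v_1},\oc\,{}^{\perp}v\cdot\nabla_v]=-\oc\,\partial_{v_2}$ and symmetrically), so one cannot close the estimate through a pure Poincaré inequality. I would handle this in two parts: the angular component is explicit, since ${}^{\perp}v\cdot\nabla_v f^{1}={\cal T}f^{1}/\oc=S/\oc$ is directly bounded in $\ltF$ (using $\inf B>0$); for the radial component $v\cdot\nabla_v f^{1}$ I would mimic the ODE argument used in the proof of Proposition \ref{SmoothfBis}, differentiating the characteristic equation for $f^{1}$ along $v$ and invoking the already-established bounds on lower-order quantities (including $\||v|\nabla_x f\|_{\ltF}$, $\||v|^{2}\nabla^{2}_{x,v}f\|_{\ltF}$, and Lipschitz continuity of $\sigma$). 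Together with $\|Q(f^{1})\|_{\ltF}\le C\|f^{1}\|_{\ltF}$ and $E\in L^{\infty}$, these four estimates assemble into ${\cal R}(f^{1})\in\liltF$.
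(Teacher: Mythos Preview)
Your approach is correct in its broad outline but takes a substantially longer route than the paper. The paper's proof is essentially a one-liner: since $\partial_t f = Q(f)$, equation \eqref{Equ59} reduces to
\[
v\cdot\nabla_x f + \tfrac{q}{m}E\cdot\nabla_v f + {\cal T}f^{1}=0,\qquad \ave{f^{1}}=0,
\]
and Remark \ref{CasPartB} gives the \emph{explicit} solution
\[
f^{1}=-{\cal B}\cdot\nabla_{x,v}f=\frac{{}^{\perp}v}{\oc}\cdot\nabla_x f-\frac{q}{m\oc}\,{}^{\perp}E\cdot\nabla_v f.
\]
This exhibits $f^{1}$ as a first-order differential operator (with coefficients controlled, together with their derivatives, by the assumptions $E,B\in W^{1,\infty}$ and $\inf B>0$) applied to $f$. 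Every term in ${\cal R}(f^{1})$ is then a second-order expression in $f$ with at most a $|v|^{2}$ weight, and the conclusion follows immediately from the bounds $(1+|v|^{2})\,\nabla^{2}_{t,x,v}f\in\liltF$ of Proposition \ref{SmoothfBis}. No Poincar\'e inequality, commutator calculus, or ODE argument is needed.

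Your Poincar\'e/commutator strategy does work for $f^{1}$, $\partial_t f^{1}$ and $\nabla_x f^{1}$, but the velocity-derivative step is the weak point. The ``ODE argument'' of Proposition \ref{SmoothfBis} is an argument in the \emph{time} variable (using the damping from the loss term of $Q$) and has no direct analog for $f^{1}$, which satisfies no closed evolution in $t$. What would actually close the radial estimate is the same commutator mechanism you used elsewhere, since $\tfrac{v}{|v|}\cdot\nabla_v$ commutes with ${\cal T}$; but then reconstructing $E\cdot\nabla_v f^{1}$ from its radial and angular pieces introduces a $1/|v|$ factor near the origin that must be dealt with separately. This is ultimately harmless (because $f\in\ker{\cal T}$ forces $\nabla_v f=(v/|v|)\,\partial_r f$ and similar cancellations occur), but it is genuine extra work that the explicit formula bypasses entirely.
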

\begin{proof}
Since $\partial _t f = Q(f)$, notice that \eqref{Equ59} can be written
\[
v \cdot \nabla _x f + \frac{q}{m} E(x) \cdot \nabla _v f + {\cal T} f^1 = 0,\;\;\ave{f^1} = 0
\]
and therefore we have, cf. Remark \ref{CasPartB}
\[
f^1= - {\cal B} \cdot \nabla _{x,v} f = \frac{^\perp v}{\oc} \cdot \nabla _x f - \frac{q}{m\oc} \;^\perp E \cdot \nabla _v f.
\]
Our conclusions follow easily by the smoothness of $f$.
\end{proof}
Combining the previous arguments we have proved the strong convergence result stated in Theorem \ref{ConvStrong}.

\subsection{Second order model}
\label{SecOrdMod}
\noindent

As we have seen in the previous section, the leading order term in the expansion $\fe = f + \eps f^1 + ...$ satisfies the space homogeneous Boltzmann equation. The collision operator appearing in the first order limit model is still the linear Boltzmann operator. In other words the magnetic field does not affect the collision mechanism (the collision operator and the advection along the Laplace force $\omega _c ^\perp v \cdot \nabla _v$ are commuting). Our goal in this section is to take into account the first order correction. We are looking for a closure involving $f + \eps f^1$ accounting for the small departures from the equilibrium $f$ when the magnetic field grows like ${\cal O}(1/\eps)$. We expect that the magnetic field will affect the collision mechanism. We will see that the new collision operator entering the second order model is no more local in space. Naturally, the new collision operator should verify the usual physical properties like mass conservation, entropy dissipation. The derivation is completely formal and it is based on the average technique introduced before. The limit model being quite complicated we prefer to skip the technical details leading eventually to rigorous results. Nevertheless we expect that the arguments employed when studying the first order model will adapt to the second order one as well.

The linear Boltzmann equation \eqref{Equ6} reads
\begin{equation}
\label{Equ71} 
\partial _t \fe + a \cdot \nabla _{x,v} \fe + \frac{1}{\eps} {\cal T} \fe = Q(\fe)
\end{equation}
where $a = ( v, q/m \;E(x)), {\cal T} = \oc ^\perp v \cdot \nabla _v$, $\oc = qB(x)/m$. Plugging the expansion $\fe = f + \eps f^1 + ...$ in \eqref{Equ71} yields the constraint ${\cal T} f = 0$ and the evolution equations
\begin{equation}
\label{Equ72} 
\partial _t f + a \cdot \nabla _{x,v} f +  {\cal T} f^1 = Q(f)
\end{equation}
\begin{equation}
\label{Equ73} 
\partial _t f^1 + a \cdot \nabla _{x,v} f^1 +  {\cal T} f^2 = Q(f^1)
\end{equation}
\[
\vdots
\]
The evolution equation for the dominant term $f$ has been obtained by averaging \eqref{Equ72} along the flow of ${\cal T}$. Indeed, by Proposition \ref{Step1} we know that there is a vector field ${\cal A } = {\cal A} (x,v)$ such that 
\[
\ave{a\cdot \nxv f } = {\cal A} \cdot \nxv f,\;\;f \in \ker {\cal T}.
\]
Actually the average field associated to $a = (v, q/m \;E(x))$ is ${\cal A} = 0$ cf. Remark \ref{CasPartaA}, but we prefer to keep track of the vector field ${\cal A}$, which will clearly show that our technique will provide a second order closure independently  the field ${\cal A}$ vanishes or not. Nevertheless, in the final model the field ${\cal A}$ will be replaced by its value ${\cal A} = 0$. 
For any $f \in \ker {\cal T}$ we have $\ave{\partial _t f } = \partial _t \ave{f} = \partial _t f$ and by Corollary \ref{ColAve} we can write
\[
\ave{Q(f)} = Q(\ave{f}) = Q(f).
\]
Therefore averaging \eqref{Equ72} leads to the homogeneous Boltzmann equation
\begin{equation}
\label{Equ74} \partial _t f + {\cal A}\cdot \nxv f = Q(f).
\end{equation}
Clearly more information is available from \eqref{Equ72}. In order to exploit that we use the decomposition
\[
f^1 = g^1 + h^1,\;\;g^1 = \ave{f^1} \in \ker {\cal T},\;\;h^1 = f^1 - \ave{f^1} \in \ker \ave{\cdot}
\]
and therefore \eqref{Equ72} becomes
\begin{equation}
\label{Equ75}
\partial _t f + a \cdot \nxv f + {\cal T} h^1 = Q(f).
\end{equation}
The equations \eqref{Equ74}, \eqref{Equ75} yields 
\begin{equation}
\label{Equ76} (a - {\cal A} ) \cdot \nxv f + {\cal T} h^1 = 0
\end{equation}
and by Remark \ref{CasPartB} we deduce that $h^1 = - \tcalB \cdot \nxv f$ where the field $\tcalB$ writes (see Remark \ref{CasPartB})
\[
\tcalB = \frac{1}{\oc} \left ( - ^\perp v, \frac{q}{m}\; ^\perp E - \frac{v \cdot \nx \omega _c}{\oc} \;^\perp v \right ).
\]
We average now \eqref{Equ73}. Notice that we have
\[
\ave{\partial _t f^1} = \partial _t \ave{f^1} = \partial _t g^1,\;\;\ave{{\cal T} f^2} = 0,\;\;\ave{Q(f^1)} = Q(\ave{f^1}) = Q(g^1)
\]
\[
\ave{a\cdot \nxv f^1 } = \ave{a \cdot \nxv g^1} + \ave{a \cdot \nxv h^1} = {\cal A} \cdot \nxv g^1 - \ave{a \cdot \nxv ( \tcalB \cdot \nxv f)}
\]
and therefore we obtain the equation
\begin{equation}
\label{Equ78} \partial _t g^1 + {\cal A} \cdot \nxv g ^1 - \ave{a \cdot \nxv ( \tcalB \cdot \nxv f)} = Q(g^1).
\end{equation}
Naturally, the closure for $f + \eps f^1 = f + \eps g^1 + \eps h^1$ comes by combining \eqref{Equ74}, \eqref{Equ78}
\begin{eqnarray}
\label{Equ79} (\partial _t + {\cal A} \cdot \nxv ) ( f + \eps f^1) + \eps ( \ave{a \cdot \nxv } - {\cal A} \cdot \nxv  +  Q -  \partial _t )h^1 = Q( f + \eps f^1).
\end{eqnarray}
Making use of Remark \ref{Use} and Proposition \ref{CollCommB} we obtain
\begin{eqnarray}
( \ave{a \cdot \nxv } - {\cal A} \cdot \nxv  \!\!\!&+&\!\!\!  Q -  \partial _t )h^1 = - \Divxv ( f {\cal C}) - Q ( \tcalB \cdot \nxv f) + \partial _t (\tcalB \cdot \nxv f) \nonumber \\
\!\!\!& = & \!\!\! - \Divxv ( f {\cal C})  + \tcalB \cdot \nxv Q(f) - Q ( \tcalB \cdot \nxv f)  \nonumber \\
\!\!\!& = & \!\!\! - {\cal C} \cdot \nxv f 
- \tQo (f).\nonumber 
\end{eqnarray}
Finally \eqref{Equ79} becomes
\begin{equation}
\label{Equ80} (\partial _t + ({\cal A - \eps {\cal C}}) \cdot \nxv ) ( f + \eps f^1) = Q(f+\eps f^1) + \eps \tQo (f) - \eps ^2 {\cal C} \cdot \nxv f^1
\end{equation}
and since we are looking for a closure with respect to $f+\eps f^1$, or something equivalent to $f + \eps f^1$ up to a second order term ${\cal O}(\eps ^2)$, we need to define $\tQo$ over the space of all functions (not only on the subspace of constant functions along the flow of $b^0$, as it was done in Proposition \ref{CollCommB}). We denote by $Q^1$ this extension and assume for the moment that $Q^1$ is known (the exact expression will be determined in Proposition \ref{Adjoint} and Corollary \ref{Q1}). By doing that we observe that the right hand side of \eqref{Equ80} writes
\begin{equation}
\label{EquNewTerm}
( Q + \eps Q^1)(f+\eps f^1)  - \eps ^2 ( {\cal C} \cdot \nxv f^1 + Q^1 (f^1)).
\end{equation}
Getting rid of the second order term and taking into account that ${\cal A }= 0$, we may expect that solving
\begin{equation}
\label{EquSecOrdApp} \partial _t \tfe - \eps {\cal C} \cdot \nxv  \tfe = (Q + \eps Q^1 ) (\tfe)
\end{equation}
provides a second order approximation of $\fe$, since at least formally 
\[
\tfe - \fe = ( \tfe - (f + \eps f^1)) - (\eps ^2 f^2 + ...) = {\cal O}(\eps ^2).
\]
A delicate point is how to pick the extension of $\tQo$. Naturally, the global collision operator $Q + \eps Q^1$, and also the first order correction $Q^1$, should dissipate the entropy
\[
( Q^1 (f), f)_\ltF \leq 0,\;\;\forall \;f.
\]
In particular, for any smooth functions $g \in \ker {\cal T}$, $h \in \ker \ave{\cdot}$ and any $\delta >0$ we have
\begin{equation}
\label{Equ81} ( Q^1 (g + \delta h), g + \delta h) _\ltF \leq 0.
\end{equation}
Recall that one of the properties of $\tQo$ was that $(\tQo (g),g) _\ltF = 0$ and therefore \eqref{Equ81} becomes
\[
(\tQo (g), h) _\ltF + (g, Q^1(h))_\ltF + \delta ( Q^1(h),h) _\ltF \leq 0
\]
which is equivalent to
\begin{equation}
\label{Equ83} (\tQo (g), h) _\ltF + (g, Q^1(h))_\ltF = 0,\;\;( Q^1(h),h) _\ltF \leq 0,\;g \in \ker {\cal T},\;h \in \ker \ave{\cdot}.
\end{equation}
Recall also that $\tQo$ maps constant functions along the flow of $b^0$ to zero average functions. We will construct the extension of $\tQo$ such that $Q^1$ maps zero average functions to constant functions along the flow of $b^0$. Notice that in this case the inequality $(Q^1(h),h) _\ltF \leq 0$ holds true for any $h \in \ker \ave{\cdot}$ (actually it becomes equality, by the orthogonality between $\ker {\cal T}$ and $\ker \ave{\cdot}$) and that the first condition in \eqref{Equ83} uniquely defines $Q^1(h) \in \ker {\cal T}$ for any $h \in \ker \ave{\cdot}$. Moreover, the above construction will ensure the global mass conservation since $\tQo$ satisfies the local mass conservation and for any smooth zero average function $h$ we can write, recalling that $\tQo (F) = 0$
\[
\intxv{Q^1 (h) (x,v) } = (Q^1 (h), F)_\ltF = - (h,\tQo(F))_\ltF = 0.
\] 
\begin{pro}
\label{Adjoint} For any smooth functions $g \in \ker {\cal T}, h \in \ker \ave{\cdot}$ we have
\[
(\tQo (g),h)_\ltF = - \frac{1}{\tau} (g,K(h))_\ltF
\]
where
\begin{eqnarray}
K(h) & = & \intvp{\left ( s_1 - \frac{|v|}{|\vp|} s_2\right )M(v) \;^\perp \vp \cdot \nx \left ( \frac{h(x,\vp)}{\oc}\right )} \nonumber \\
& - & \intvp{ s_2 \frac{m}{\theta} \left [|v| M(v) \frac{\vp \cdot v_\wedge}{|\vp|} \;h(x,\vp) - |\vp| M(\vp) \frac{v \cdot v_\wedge}{|v|} \;h(x,v)\right ]}. \nonumber 
\end{eqnarray}
In particular the unique function $Q^1(h) \in \ker {\cal T}$ satisfying 
\begin{equation}
\label{Equ85} (\tQo (g), h)_\ltF + (g, Q^1 (h))_\ltF = 0,\;\;\forall \;g \in \ker {\cal T}
\end{equation}
is given by
\begin{eqnarray}
Q^1(h) & = & \frac{1}{\tau} \intvp{\left ( s_1 - \frac{|v|}{|\vp|} s_2\right )M(v) \Divx\left ( \frac{\ave{^\perp \vp  h}(x,\vp)}{\oc}\right )} \nonumber \\
& - & \frac{1}{\tau} \intvp{ s_2 \frac{m}{\theta} \left [|v| M(v) \;v_\wedge \cdot \frac{\ave{\vp h}(x,\vp)}{|\vp|}  - |\vp| M(\vp) \;v_\wedge \cdot\frac{\ave{v h}(x,v)}{|v|} \right ]}. \nonumber 
\end{eqnarray}
\end{pro}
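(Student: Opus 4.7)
The statement contains two distinct claims: (i) the duality identity $(\tQo(g),h)_\ltF = -\frac{1}{\tau}(g,K(h))_\ltF$ for smooth $g\in\ker{\cal T}$ and $h\in\ker\ave{\cdot}$; and (ii) the existence, uniqueness and explicit form of the operator $Q^1(h)\in\ker{\cal T}$ defined by \eqref{Equ85}. I would establish (i) by a direct manipulation starting from the explicit expression for $\tQo(g)$ provided by Proposition \ref{CollCommB}, and then obtain (ii) using the orthogonal projection interpretation of $\ave{\cdot}$ recorded in Proposition \ref{AverageProp}.

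For (i), write $\tau\tQo(g) = I_1 + I_2 - I_3$ where $I_1, I_2, I_3$ are the three terms in the formula of Proposition \ref{CollCommB}, and set $S_i = \intxv{I_i\, h/F}$, so that $\tau(\tQo(g),h)_\ltF = S_1 + S_2 - S_3$. The decisive step is to integrate $S_1$ by parts in $x$, transferring the gradient from $g(x,\vp)$ onto $h(x,v)/F(x,v)$. Since $F(x,v) = M(v)\rho_F(x)$ with $\rho_F(x) = \exp(-q\phi(x)/\theta)$, the Leibniz rule produces a principal piece in which $\nx$ falls on $h/\oc$ and an additional piece coming from $\nx(1/\rho_F) = -\frac{q}{\theta}E/\rho_F$. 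Using $v_\wedge = \;^\perp E/B$ together with $\oc = qB/m$, the inner product $^\perp\vp\cdot E$ equals $-B(\vp\cdot v_\wedge)$, so the extra piece acquires the factor $-\frac{m}{\theta}(\vp\cdot v_\wedge)$. After relabeling $v\leftrightarrow\vp$ and using the symmetries $s_j(v,\vp) = s_j(\vp,v)$ for $j=1,2$, the principal piece of $S_1$ reproduces $-(g,K_1)_\ltF$ with $K_1$ the first integral in $K(h)$; the extra piece combines with $S_2$ to produce the $h(x,\vp)$-weighted part of $(g,K(h))_\ltF$ carrying the weight $s_2|v|/|\vp|$; and $S_3$ alone yields the remaining term. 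Note that $S_3$ does not need a swap since $g(x,v)$ already carries the correct argument; the inner $|\vp|$-integral reduces to a function of $|v|$ only.

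For (ii), relation \eqref{Equ85} is equivalent to requiring $(g,\,Q^1(h) - K(h)/\tau)_\ltF = 0$ for every $g\in\ker{\cal T}$. Because $F\in\ker{\cal T}$ (and likewise $1/F\in\ker{\cal T}$), multiplication by $1/F$ identifies $\ker{\cal T}$ with its image in $L^2(\mathrm{d}v\mathrm{d}x)$ and carries the $\ltF$-orthogonal complement of $\ker{\cal T}$ onto $\ker\ave{\cdot}$; consequently the orthogonality requirement is equivalent to $\ave{(Q^1(h) - K(h)/\tau)/F} = 0$, which combined with $Q^1(h)\in\ker{\cal T}$ forces $Q^1(h) = \frac{1}{\tau}\ave{K(h)}$. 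To match this with the stated formula, observe that the first term of $K(h)$ already lies in $\ker{\cal T}$, its $v$-dependence being carried solely by $|v|$ and $M(v)$. Exploiting that $s_1, s_2$ are rotationally invariant in $\vp$, the identity $\intvp{\psi(|\vp|)f(x,\vp)} = \intvp{\psi(|\vp|)\ave{f}(x,\vp)}$ (established by the change of variable $\vp\mapsto R_\alpha\vp$ followed by averaging in $\alpha$) converts $^\perp\vp\cdot\nx(h/\oc)$ into $\Divx(\ave{^\perp\vp h}/\oc)$ inside the $\vp$-integral. The $h(x,\vp)$-piece of the second term of $K(h)$ is treated analogously in $\vp$, whereas the $h(x,v)$-piece, not belonging to $\ker{\cal T}$, must be genuinely averaged in $v$: since $v_\wedge$ is $v$-independent, $\ave{(v\cdot v_\wedge)h(x,v)/|v|} = v_\wedge\cdot\ave{vh}(x,v)/|v|$.

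The main obstacle is the precise bookkeeping of step (i): one must confirm that the extra term generated by the integration by parts in $x$ (accounting for the $x$-dependence of $\rho_F$) combines in the right signs and weights with the $v_\wedge$-terms already present in $\tQo(g)$ to reconstitute the compact expression for $K(h)$. A secondary delicacy is to distinguish coherently between the two gyroaverage operations involved: averaging in $v$ (used to project $K(h)$ onto $\ker{\cal T}$ and thereby define $Q^1(h)$) and averaging in $\vp$ (used to reintroduce $\ave{\vp h}$, $\ave{^\perp\vp h}$ under the $\vp$-integral); though identical as abstract operations, they act on different slots and must not be conflated.
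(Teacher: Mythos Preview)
Your proposal is correct and follows essentially the same route as the paper. The one noteworthy difference is that the paper starts from the $F$-rewritten form of $\tQo(g)$ given at the end of the proof of Proposition~\ref{CollCommB}, namely
\[
\tQo(g)=\frac{1}{\tau}\intvp{\Bigl(s_1-\tfrac{|\vp|}{|v|}s_2\Bigr)M(v)F(x,\vp)\,\frac{^\perp v}{\oc}\cdot\nx\Bigl(\frac{g(x,\vp)}{F(x,\vp)}\Bigr)}+\cdots,
\]
so that after dividing by $F(x,v)$ the factor $M(v)F(x,\vp)/F(x,v)$ collapses to $M(\vp)$ and the integration by parts in $x$ produces no extra term at all; the gradient lands directly on $h(x,v)/\oc$. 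By starting instead from the three-term form you generate the additional piece from $\nx(1/\rho_F)$ and must then observe that it combines with $S_2$ to reproduce the $s_2|v|/|\vp|$ contribution---exactly the bookkeeping you identify as the main obstacle. Both computations give the same result, but the paper's choice of starting point makes that obstacle disappear. For part~(ii) your argument ($Q^1(h)=\tfrac{1}{\tau}\ave{K(h)}$, followed by the $\vp$-rotation identity) matches the paper's, which packages the latter step as Lemma~\ref{IntAve}.
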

\begin{proof}
By Proposition \ref{CollCommB} we know that
\begin{eqnarray}
\tQo (g) & = & \frac{1}{\tau} \intvp{\left ( s_1 - \frac{|\vp|}{|v|} s_2\right )M(v)F(x,\vp) \frac{^\perp v}{\oc} \cdot \nx \left ( \frac{g(x,\vp)}{F(x,\vp)}\right ) } \nonumber \\
& + & \frac{1}{\tau} \intvp{ \frac{|\vp|}{|v|}s_2 \frac{m}{\theta} (v \cdot v_\wedge ) M(v) F(x,\vp) \left [\frac{g(x,\vp)}{F(x,\vp)} -  \frac{g(x,v)}{F(x,v)}\right ]}.\nonumber
\end{eqnarray}
Integrating by parts with respect to $x$ leads to
\begin{eqnarray}
\!\!\!&  &\!\!\!\tau ( \tQo (g), h)_\ltF =  - \intxv{\intvp{\left ( s_1 - \frac{|\vp|}{|v|}s_2 \right )M(\vp) \;^\perp v \cdot \nx \left ( \frac{h(x,v)}{\oc}\right )\frac{g(x,\vp)}{F(x,\vp)}}} \nonumber \\
\!\!\!& + &\!\!\! \intxv{\intvp{ \frac{|\vp|}{|v|}s_2 \frac{m}{\theta} (v \cdot v _\wedge) \{M(v) g(x,\vp) - M(\vp) g(x,v)\}\frac{h(x,v)}{F(x,v)}}} \nonumber \\
\!\!\!& = &\!\!\! - (g,K(h))_\ltF.\nonumber 
\end{eqnarray}
Since 
$$
\tau (g, Q^1(h))_\ltF = - \tau (\tQo(g),h)_\ltF = (g,K(h))_\ltF = (g, \ave{K(h)})_\ltF 
$$
and $Q^1 (h) \in \ker {\cal T}$, we deduce 
that  $Q^1(h) = \ave{K(h)}/\tau$. For further transformations of the terms in $K(h)$ we need the easy lemma
\begin{lemma}
\label{IntAve}
Let $k = k(v,\vp)$ be a bounded function depending only on $r = |v|$ and $\rp = |\vp|$. Then for any function $f = f(x,v) \in \ltF$ we have $\intvp{k(v,\vp) M(v) f(x,\vp)} \in \ltF$, $\intvp{k(v,\vp) M(\vp) f(x,v)} \in \ltF$ and
\[
\ave{\intvp{k(v,\vp) M(v) f(x,\vp)}} = \intvp{k(v,\vp)M(v) \ave{f}(x,\vp)}
\]
\[
\ave{\intvp{k(v,\vp) M(\vp) f(x,v)}} = \intvp{k(v,\vp)M(\vp) \ave{f}(x,v)}.
\]
\end{lemma}
\begin{proof}
Observe that $\intvp{k(v,\vp)M(v) \ave{f}(x,\vp)}$, $\intvp{k(v,\vp)M(\vp) \ave{f}(x,v)}$ depend  only on $x$ and $|v|$ and therefore they belong to $\ker {\cal T}$. Pick now $\varphi = \varphi (x,v) \in \ltF$ a constant function along the flow of $b^0$. We can write, observing that $(x,\vp) \to \intv{k(v,\vp) M(v) \varphi (x,v)} \in \ker {\cal T}$
\begin{eqnarray}
\intxvbis{\varphi  \intvp{k(v,\vp) M(v) f(x,\vp)}} \!\!\!&=&\!\!\! \intxvp{f(x,\vp)\intv{k(v,\vp) M(v) \varphi(x,v)}} \nonumber \\
\!\!\!&=&\!\!\!\intxvp{\!\!\ave{f}(x,\vp)\!\!\intv{k(v,\vp) M(v) \varphi(x,v)}} \nonumber \\
\!\!\!&=&\!\!\!\intxvbis{\varphi (x,v) \intvp{k(v,\vp) M(v) \ave{f}(x,\vp)}}\nonumber
\end{eqnarray}
saying that $\ave{\intvp{k(v,\vp) M(v) f(x,\vp)}} = \intvp{k(v,\vp)M(v) \ave{f}(x,\vp)}$. The second statement follows immediately since $\intvp{k(v,\vp) M(\vp) } \in \ker {\cal T}$ and thus
\[
\ave{\intvp{k(v,\vp) M(\vp) f(x,v)}} = \intvp{k(v,\vp)M(\vp)} \ave{f}(x,v).
\]
\end{proof}
Appealing to Lemma \ref{IntAve} and Remark \ref{CasPartDiv} we obtain
\begin{eqnarray}
& & \ave{\intvp{\left ( s_1 - \frac{|v|}{|\vp|}s_2 \right )M(v)\Divx \left ( \frac{^\perp \vp h(x,\vp)}{\oc}\right ) } } \nonumber \\
& & = \intvp{\left ( s_1 - \frac{|v|}{|\vp|}s_2 \right )M(v)\Divx \ave{ \frac{^\perp \vp h(x,\vp)}{\oc}}} \nonumber 
\end{eqnarray}
and
\begin{eqnarray}
& & \ave{\intvp{s_2 \frac{m}{\theta} \left [ |v|M(v) \left ( \frac{\vp}{|\vp|} \cdot v_\wedge \right ) h(x,\vp) - |\vp|M(\vp) \left ( \frac{v}{|v|} \cdot v_\wedge \right ) h(x,v)\right] }} \nonumber \\
& & = \intvp{s_2 \frac{m}{\theta} \left [ |v|M(v) v_\wedge \cdot \ave{\frac{\vp}{|\vp|} h}(x,\vp) - |\vp|M(\vp) v_\wedge \cdot \ave{\frac{v}{|v|}  h}(x,v)\right] } \nonumber
\end{eqnarray}
implying that 
\begin{eqnarray}
Q^1(h) & = & \frac{1}{\tau} \intvp{\left ( s_1 - \frac{|v|}{|\vp|} s_2\right )M(v) \Divx\left ( \frac{\ave{^\perp \vp  h}(x,\vp)}{\oc}\right )} \nonumber \\
& - & \frac{1}{\tau} \intvp{ s_2 \frac{m}{\theta} \left [|v| M(v) \;v_\wedge \cdot \frac{\ave{\vp h}(x,\vp)}{|\vp|}  - |\vp| M(\vp) \;v_\wedge \cdot\frac{\ave{v h}(x,v)}{|v|} \right ]}. \nonumber 
\end{eqnarray}
\end{proof}
\begin{coro}
\label{Q1} The extension $Q^1$ of the operator $\tQo$ satisfying \eqref{Equ85} is given by
\begin{eqnarray}
\label{Form}
Q^1(f) & = & \frac{1}{\tau} \intvp{\left ( s_1 - \frac{|\vp|}{|v|} s_2\right )M(v) \;v \cdot \left (\frac{m v_\wedge }{\theta}\ave{f}(x,\vp) - \frac{^\perp \nabla _x \ave{f} (x,\vp)}{\oc}   \right ) }\nonumber \\
& + & \frac{1}{\tau} \intvp{s_2 \frac{|\vp|}{|v|} \frac{m}{\theta} (v \cdot v_\wedge)\{ M(v) \ave{f}(x,\vp) - M(\vp) \ave{f}(x,v)\}} \nonumber \\
& + & \frac{1}{\tau} \intvp{\left ( s_1 - \frac{|v|}{|\vp|} s_2\right )M(v) \;\Divx\left ( \frac{\ave{^\perp \vp  f}(x,\vp)}{\oc}\right )} \nonumber \\
& - & \frac{1}{\tau} \intvp{ s_2 \frac{m}{\theta} \;v_\wedge \cdot \left [|v| M(v)  \frac{\ave{\vp f}(x,\vp)}{|\vp|}  - |\vp| M(\vp) \frac{\ave{v f}(x,v)}{|v|} \right ]}. 
\end{eqnarray}
Moreover, when the scattering cross section is constant we have
\begin{equation}
\label{FormBis}
Q^1(f) = \frac{s}{\tau} M(v) \left [\Divx \left ( \frac{^\perp j_f (x)}{\oc} \right ) + \frac{\rho _F (x)}{\oc} \;^\perp v \cdot \nx \left ( \frac{\rho _f (x)}{\rho _F (x)} \right )    \right ]
\end{equation}
where $\rho _f = \intvp{f}$, $\rho _F = \intvp{F}$, $j_f = \intvp{\vp f}$.
\end{coro}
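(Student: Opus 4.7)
The idea is to combine Proposition \ref{CollCommB} and Proposition \ref{Adjoint} through the orthogonal splitting
$f = \ave{f} + (f-\ave{f})$, with $\ave{f} \in \ker {\cal T}$ and $f - \ave{f} \in \ker \ave{\cdot}$. Since $Q^1$ is required to extend $\tilde{Q}^1$ on constant functions along the flow of $b^0$ and to satisfy the adjoint relation \eqref{Equ85}, by linearity we must have
\[
Q^1(f) = \tQo(\ave{f}) + Q^1(f-\ave{f}).
\]

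First I would treat the piece $\tQo(\ave{f})$. Here one just substitutes $g = \ave{f}$ into the explicit formula produced in the proof of Proposition \ref{CollCommB}. This directly yields the first two lines of \eqref{Form}, written equivalently as
\[
\frac{1}{\tau}\intvp{\left(s_1 - \tfrac{|\vp|}{|v|} s_2\right)M(v) v \cdot \left(\tfrac{m v_\wedge}{\theta}\ave{f}(x,\vp) - \tfrac{^\perp \nabla_x \ave{f}(x,\vp)}{\oc}\right)}
\]
plus the $s_2$-contribution of the last line of the expression in the proof of Proposition \ref{CollCommB}. No further calculation is needed since Proposition \ref{CollCommB} was stated exactly for arguments in $\ker {\cal T}$.

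Second I would treat the zero-average piece using Proposition \ref{Adjoint} applied to $h = f - \ave{f}$. This supplies the last two lines of \eqref{Form}, but with $h$ in place of $f$ inside the averages $\ave{^\perp\vp h}$, $\ave{\vp h}$, $\ave{v h}$. The small clean-up step is to notice that $\ave{f}(x,\vp)$ depends on $\vp$ only through $|\vp|$ and therefore $\ave{^\perp\vp\,\ave{f}} = \ave{^\perp\vp}\ave{f} = 0$, and similarly $\ave{\vp\,\ave{f}} = 0$, $\ave{v\,\ave{f}} = 0$. Consequently $\ave{^\perp\vp h} = \ave{^\perp\vp f}$, $\ave{\vp h} = \ave{\vp f}$, $\ave{v h} = \ave{v f}$, which converts the expression from Proposition \ref{Adjoint} into the last two lines of \eqref{Form}. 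Adding the two contributions gives the announced formula \eqref{Form}.

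For the constant cross section case, the reduction is straightforward computation. When $s(v,\vp) \equiv s$ one has by direct calculation $s_1 = \ave{s} = s$ and $s_2 = \ave{s\,(v\cdot\vp)/(|v||\vp|)} = 0$, since averaging $(v\cdot\vp)/(|v||\vp|) = \cos(\alpha(\vp)-\alpha(v))$ over the rotation of $\vp$ yields $0$. Substituting $s_1 = s$, $s_2 = 0$ in \eqref{Form} annihilates all terms containing $s_2$, and the remaining terms collapse to
\[
Q^1(f) = \frac{s}{\tau}M(v) \intvp{\left(v \cdot \tfrac{m v_\wedge}{\theta}\ave{f}(x,\vp) - \tfrac{v \cdot \,^\perp \nabla_x \ave{f}(x,\vp)}{\oc}\right)} + \frac{s}{\tau}M(v)\Divx\left(\tfrac{^\perp j_f(x)}{\oc}\right),
\]
where I used $\intvp{\ave{^\perp\vp f}(x,\vp)} = \intvp{^\perp\vp f(x,\vp)} = {^\perp j_f(x)}$. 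Finally I would rewrite the first integral using $\intvp{\ave{f}(x,\vp)} = \rho_f(x)$ together with $m v_\wedge/\theta = -^\perp\nabla_x(\log\rho_F)/\oc$ (which comes from $v_\wedge = {^\perp E}/B$ and $\rho_F = e^{-q\phi/\theta}$) to combine the two $\rho_f$ terms into the factorized form $(\rho_F/\oc)\,^\perp v \cdot \nabla_x(\rho_f/\rho_F)$, yielding \eqref{FormBis}. The main (purely bookkeeping) obstacle is just keeping track of the decomposition of $f$ and of the vanishing of the $\ave{f}$-parts inside the $v^{\prime}$-weighted averages; there is no hard analytic step, everything reduces to linearity and the two preceding propositions.
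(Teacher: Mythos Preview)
Your proposal is correct and follows exactly the same route as the paper: decompose $f=\ave{f}+(f-\ave{f})$, apply Proposition~\ref{CollCommB} to $\ave{f}$ and Proposition~\ref{Adjoint} to $f-\ave{f}$, use $\ave{v(f-\ave{f})}=\ave{vf}$ to replace $h$ by $f$ in the weighted averages, and then set $s_1=s$, $s_2=0$ for the constant case. Your write-up is in fact more detailed than the paper's own proof, which merely states the decomposition and the identity $\ave{v(f-\ave{f})}=\ave{vf}$ and asserts that \eqref{Form} and \eqref{FormBis} follow.
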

\begin{proof}
Using the decomposition $f = \ave{f} + ( f - \ave{f})$, with $\ave{f} \in \ker {\cal T}$ and $f - \ave{f} \in \ker \ave{\cdot}$ and taking into account that $\ave{v(f - \ave{f})} = \ave{vf}$ we can write
\[
Q^1(f) = \tQo (\ave{f}) + Q^1 ( f - \ave{f})
\]
leading to the formula \eqref{Form}. In particular, when the scattering cross section is constant, we have $s_1 = s$, $s_2 = 0$ and \eqref{Form} reduces to \eqref{FormBis}.
\end{proof}
\begin{remark}
\label{SecondOrderIdea} Replacing the vector field ${\cal C}$ by \eqref{EquCComponent}, the collision operator $Q$ by \eqref{EquDomCol} and the extension operator $Q^1$ by \eqref{FormBis} in the equation \eqref{EquSecOrdApp} leads to the second order model stated in Theorem \ref{ThSecOrdApp} (case of constant scattering cross section).
\end{remark}
It is easily seen, at least for constant scattering cross section, that the kernel of $Q+\eps Q^1$ reduces to $\R F = \{\lambda F:\lambda \in \R\}$. Let us establish this property in the general case.
\begin{pro}
\label{KernelQQone}
For any $\eps >0$ the only functions in the kernel of $Q + \eps Q^1$ are those proportional to $F$.
\end{pro}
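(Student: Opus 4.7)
The plan is to exploit the orthogonal decomposition $\ltF = \ker {\cal T} \oplus \ker \ave{\cdot}$---which is $\ltF$-orthogonal because the weight $F$ is ${\cal T}$-invariant---together with the range and adjoint properties of $Q$ and $Q^1$ established in Section \ref{AveColOpe}. Given $f \in \ker(Q + \eps Q^1)$, write $f = g + h$ with $g = \ave{f} \in \ker {\cal T}$ and $h = f - \ave{f} \in \ker \ave{\cdot}$. By Lemma \ref{LC} the operator $Q$ preserves each of these two subspaces; by the construction of $Q^1$ spelled out right before Proposition \ref{Adjoint}, $\tQo$ maps $\ker {\cal T}$ into $\ker \ave{\cdot}$ while $Q^1$ restricted to $\ker \ave{\cdot}$ takes values in $\ker {\cal T}$. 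Projecting the identity $(Q+\eps Q^1)(f) = 0$ onto the two orthogonal factors therefore decouples it into
\begin{equation*}
Q(g) + \eps Q^1(h) = 0, \qquad Q(h) + \eps \tQo(g) = 0 .
\end{equation*}

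Next, pairing the first equation with $g$ and the second with $h$ in $\ltF$, and invoking the adjoint identity $(g, Q^1(h))_{\ltF} = -(\tQo(g), h)_{\ltF}$ from Proposition \ref{Adjoint}, the $\eps$-terms cancel upon adding and one obtains
\begin{equation*}
(Q(g), g)_{\ltF} + (Q(h), h)_{\ltF} = 0.
\end{equation*}
The classical $\ltF$-entropy dissipation of the linear Boltzmann operator $Q$ is non-positive and vanishes precisely on functions of the form $\rho(x) M(v)$, so both terms must vanish individually. Hence $g(x,v) = \rho_g(x) M(v)$ and $h(x,v) = \rho_h(x) M(v)$. But $\rho_h M$ depends only on $x$ and $|v|$, so it lies in $\ker {\cal T}$; combined with the constraint $h \in \ker \ave{\cdot}$, this forces $h \equiv 0$ and $f = \rho(x) M(v)$.

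It remains to identify $\rho$. Plugging $h = 0$ into the second equation gives $\tQo(\rho M) = 0$. Since $Q(\rho M) = 0$, the commutator identity of Proposition \ref{CollCommB} reduces this to $Q(\tcalB \cdot \nxv (\rho M)) = 0$. A direct computation with the explicit form of $\tcalB$ from Remark \ref{CasPartB} (using $v \cdot {}^\perp v = 0$) yields
\begin{equation*}
\tcalB \cdot \nxv (\rho M) = M(v)\left\{-\frac{{}^\perp v \cdot \nx \rho}{\oc} - \frac{m\rho}{\theta}\, v \cdot v_\wedge\right\}.
\end{equation*}
For the image of this $M$-weighted function under $Q$ to vanish, the braced expression must depend only on $x$; being homogeneous linear in $v$ (and vanishing at $v=0$), it must actually be identically zero in $v$. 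Differentiating in $v$ produces the gradient relation $\nx \rho = -(q/\theta)\rho \nx \phi$, whose solutions are exactly $\rho = \lambda e^{-q\phi/\theta}$; therefore $f = \lambda F$.

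The main obstacle we anticipate is this last step: tackling $\tQo(\rho M) = 0$ directly through the intricate explicit formula of Proposition \ref{Adjoint} would be quite unpleasant. The shortcut of writing $\tQo = Q \circ (\tcalB \cdot \nxv) - (\tcalB \cdot \nxv) \circ Q$ and exploiting $Q(\rho M) = 0$ collapses the argument to a transparent algebraic identification of $\ker Q$.
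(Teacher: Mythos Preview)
Your argument is correct, and the last step is genuinely simpler than the paper's.

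For the reduction to $f=\rho(x)M(v)$, the paper proceeds slightly more directly: it uses that $(Q^1(f),f)_{\ltF}=0$ for \emph{every} $f$ (a consequence of the very construction you unpack --- property~5 of $\tQo$, the adjoint relation of Proposition~\ref{Adjoint}, and the $\ltF$-orthogonality of $\ker{\cal T}$ and $\ker\ave{\cdot}$), so that $(Q(f),f)_{\ltF}=((Q+\eps Q^1)(f),f)_{\ltF}=0$ in one line. Your decomposition $f=g+h$ and the two projected equations are an explicit unpacking of the same identity; since $Q$ preserves both summands, $(Q(f),f)_{\ltF}=(Q(g),g)_{\ltF}+(Q(h),h)_{\ltF}$, so the two routes coincide.

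The real divergence is in identifying $\rho$. The paper plugs $f=\rho M$ into the explicit formula \eqref{Form} for $Q^1$, reduces it to
\[
Q^1(\rho M)=\frac{1}{\tau} M(v)\,\frac{v}{|v|}\cdot\left(\frac{mv_\wedge}{\theta}\rho-\frac{^\perp\nabla_x\rho}{\oc}\right)\intvp{(|v|s_1-|\vp|s_2)M(\vp)},
\]
and then has to argue that the scalar factor $J(r)=\intvp{(|v|s_1-|\vp|s_2)M(\vp)}$ does not vanish identically; this is done by computing $\int_{\R_+}J(r)\,\mathrm{d}r>0$ via the substitution $\chi=r^2+(r')^2-2rr'\cos\alpha$. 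Your shortcut --- using $\tQo=Q\circ(\tcalB\cdot\nxv)-(\tcalB\cdot\nxv)\circ Q$ and $Q(\rho M)=0$ to reduce $\tQo(\rho M)=0$ to a membership question in $\ker Q$ --- bypasses both the explicit formula \eqref{Form} and the positivity argument for $J$, and works uniformly in the scattering cross section. It is a cleaner proof.
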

\begin{proof}
Let $f$ be such that $(Q + \eps Q^1)( f) = 0$. Based on the properties of $Q$ and $Q^1$ we have
\[
(Q(f), f)_\ltF = (Q(f) + \eps Q^1 (f), f)_\ltF = 0
\]
and
\[
(Q(f), f)_\ltF = - \frac{1}{2\tau}\intxv{\intvp{s(v,\vp)M(v) F(x,\vp) \left (\frac{f(x,\vp)}{F(x,\vp)} -  \frac{f(x,v)}{F(x,v)}\right ) ^2}}
\]
implying that $f(x,v) = \rho (x) M(v)$ for some function $\rho = \rho (x)$. Therefore $Q(f) = 0$ and thus $Q^1 (f) = 0$. It is easily seen that for $f(x,v) = \rho (x) M(v)$ the formula \eqref{Form} reduces to 
\[
Q^1(f) = \frac{1}{\tau} M(v) \frac{v}{|v|} \cdot \intvp{\left ( \frac{mv_\wedge}{\theta} \rho (x) - \frac{^\perp \nabla _x \rho }{\oc} \right )(|v| s_1 - |\vp| s_2 ) M(\vp)}.
\]
We are ready if we prove that there is $r>0$ such that 
\[
J(v):= \intvp{(|v| s_1 - |\vp| s_2 ) M(\vp)}>0,\;\;|v| = r
\]
since in that case we deduce that 
\[
- \frac{\rho _F (x)}{\oc} \;^\perp \nabla _x \left (\frac{\rho (x)}{\rho _F (x)}   \right ) = \frac{mv_\wedge}{\theta} \rho (x) - \frac{^\perp \nabla _x \rho }{\oc} = 0
\]
saying that $\rho = C \rho _F$ for some constant $C$, or $f(x,v) = C\rho _F (x) M(v) = CF(x,v)$.
Recall that $s_1, s_2$ depend only on $r = |v|, \rp = |\vp|$ and
\[
s_1 = \frac{1}{2\pi} \int _0 ^{2\pi} \sigma (\sqrt{\chi})\;\mathrm{d}\alpha,\;\;s_2 = \frac{1}{2\pi} \int _0 ^{2\pi} \sigma (\sqrt{\chi})\cos \alpha \;\mathrm{d}\alpha,\;\;\chi = r^2 + (\rp)^2 - 2r\rp \cos \alpha.
\]
Therefore one gets
\[
J(r) = \int_{\R_+} \int _0 ^{2\pi} ( r - \rp \cos \alpha ) \sigma ( \sqrt{\chi}) \mu (\rp) \;\mathrm{d}\alpha\mathrm{d}\rp = \frac{1}{2}\int_{\R_+} \int _0 ^{2\pi} \frac{\partial \chi}{\partial r} \sigma ( \sqrt{\chi}) \mu (\rp) \;\mathrm{d}\alpha\mathrm{d}\rp
\]
where $\mu (\rp) = m/(2\pi \theta) \exp ( - \frac{m (\rp)^2}{2\theta} )$. Integrating with respect to $r \in \R_+$ yields
\[
\int _{\R_+} J(r) \;\mathrm{d}r = \frac{1}{2}\int_{\R_+} \int _0 ^{2\pi} \mu (\rp) \int _{(\rp)^2} ^ {+\infty} \sigma (\sqrt{\chi})\;\mathrm{d} \chi \mathrm{d}\alpha \mathrm{d}\rp >0. 
\]
Therefore there is $r^\star >0$ such that $J(r ^\star) >0$.
\end{proof}
Another point to be discussed is related to the continuity equation. Since for any $\eps >0$ the solution $\fe$ of the linear Boltzmann equation \eqref{Equ6} satisfies a equation like
\[
\partial _t \intv{\fe} + \Divx \intv{v \fe} = 0
\]
we should retrieve a similar one when integrating the second order approximation \eqref{EquSecOrdApp} with respect to $v \in \R^2$. In the case of constant scattering cross section we obtain
\[
\partial _t \intv{\tfe} + \eps \Divx \intv{(v_\wedge + v_{\mathrm{GD}})\tfe} = \eps\intv{Q^1(\tfe)} = \eps \frac{s}{\tau} \Divx  \intv{\frac{^\perp v}{\omega_c} \tfe}   
\]
and the continuity equation accounts for a new drift $v _{\mathrm{QD}} = - \frac{s}{\tau} \frac{^\perp v}{\oc}$, orthogonal to the magnetic lines, due to collisions
\[
\partial _t \intv{\tfe} + \eps \Divx \intv{(v_\wedge + v_{\mathrm{GD}} +v_{\mathrm{QD}})\tfe} = 0.
\]
In the general case, it is easily seen that 
\[
\intv{Q^1(\tfe)} = \frac{1}{\tau} \Divx \intv{\intvp{\left ( s_1 - \frac{|v|}{|\vp|} s_2 \right ) M(v) \frac{^\perp \vp}{\omega _c} \tfe (x,\vp)}} = - \Divx \intv{v_{\mathrm{QD}} \tfe}
\]
and therefore the collision drift is given by
\begin{equation}
\label{EquDriftNonConstantS}
v_{\mathrm{QD}} = - \frac{1}{\tau} \intvp{\left ( s_1 - \frac{|\vp|}{|v|} s_2 \right )M(\vp)} \;\frac{^\perp v}{\oc}.
\end{equation}
The previous arguments provide the second order approximation model, as stated by Theorem \ref{ThSecOrdApp}. For the sake of the presentation we assume here that the scattering cross section is constant.
\begin{proof} (of Theorem \ref{ThSecOrdApp})
Consider $f$ the solution of the problem $\partial _t f = Q(f), t>0$ with the initial condition $f(0) = \fin$ and $g^1$ the solution of the problem
\[
\partial _t g^1 + \ave{a\cdot \nxv h^1} = Q(g^1),\;\;t>0,\;\;g^1(0) = 0
\]
where $h^1 = - \tcalB \cdot \nxv f$. Notice that ${\cal T} g^1 = 0$, $\ave{h^1} = 0$ and by the previous computations we know that 
\[
\partial _t f + a \cdot \nxv f + {\cal T} f^1 = Q(f),\;\; f^1 = g^1 + h^1.
\]
We take also a smooth function $f^2$ such that 
\[
\partial _t f^1 + a \cdot \nxv f^1 + {\cal T} f^2 = Q(f^1)
\]
which is possible thanks to the evolution equation satisfied by $g^1$. As before we write
\[
(\partial _t + a\cdot \nxv + \eps ^{-1} {\cal T}) ( f + \eps f^1 + \eps ^2 f^2) = Q(f + \eps f^1 + \eps ^2 f^2) + \eps ^2 (\partial _t + a\cdot \nxv - Q)(f^2).
\]
Combining to the equation satisfied by $\fe$ we obtain, with the notation $\re = \fe - (f + \eps f^1 + \eps ^2 f^2)$
\[
(\partial _t + a\cdot \nxv + \eps ^{-1} {\cal T}) \re = Q(\re) + \eps ^2 (Q - \partial _t - a\cdot \nxv ) f^2.
\]
Using the entropy inequality we deduce that, for some constant $C_T$
\[
\frac{1}{2}\frac{\mathrm{d}}{\mathrm{d}t} \|\re \|_\ltF ^2 \leq C_T \eps ^2 \|\re \|_\ltF,\;\;t \in [0,T],\;\;\eps >0
\]
implying that
\begin{equation}
\label{Equ93} \|\re (t)\|_\ltF \leq \|\re (0) \|_\ltF + T C_T \eps ^2,\;\;t \in [0,T],\;\;\eps >0.
\end{equation}
Notice that 
\begin{eqnarray}
\re(0) &=& \fe _{\mathrm{in}} - (f(0) + \eps f^1 (0) + \eps ^2 f^2 (0)) = \fe _{\mathrm{in}} - (\fin + \eps h^1 (0)) + {\cal O}(\eps ^2) \nonumber \\
& = & \fe _{\mathrm{in}} - (\fin - \eps \tcalB \cdot \nxv \fin ) + {\cal O}(\eps ^2) = {\cal O}(\eps ^2) \nonumber 
\end{eqnarray}
and therefore
\begin{equation}
\label{Equ94} \sup _{0 \leq t \leq T} \|\fe (t) - ( f(t) + \eps f^1 (t))\|_\ltF = {\cal O}(\eps ^2).
\end{equation}
We also know (cf. \eqref{Equ80}, \eqref{EquNewTerm}) that $f + \eps f ^1$ satisfies
\[
(\partial _t - \eps {\cal C} \cdot \nxv ) (f + \eps f^1 ) = (Q + \eps Q^1) (f + \eps f^1) - \eps ^2 ( {\cal C} \cdot \nxv f^1 + Q^1 (f^1))
\]
which is a second order perturbation of the equation \eqref{EquSecOrdApp} (or \eqref{Equ91}) satisfied by $\tfe$, implying that 
\[
(\partial _t - \eps {\cal C} \cdot \nxv ) \tre = (Q+ \eps Q^1)(\tre) + \eps ^2 ({\cal C} \cdot \nxv f^1 + Q^1(f^1)),\;\;\tre = \tfe - (f + \eps f^1).
\]
Using now the entropy inequality of $Q + \eps Q^1$ yields
\[
\frac{1}{2}\frac{\mathrm{d}}{\mathrm{d}t} \|\tre \|_\ltF ^2 \leq \tilde{C}_T \eps ^2 \|\tre \|_\ltF,\;\;t \in [0,T],\;\;\eps >0.
\]
Taking into account that $\tre(0) = \tfe (0) - (f(0) + \eps h^1(0)) = \tfe (0) - (\fin - \eps \tcalB \cdot \nxv \fin) = 0$ we obtain
\[
\|\tre (t)\|_\ltF \leq T \tilde{C}_T \eps ^2,\;\;t \in [0,T],\;\;\eps >0
\]
saying that 
\begin{equation}
\label{Equ95}
\sup _{0 \leq t \leq T} \|\tfe (t) - ( f(t) + \eps f^1 (t))\|_\ltF = {\cal O}(\eps ^2).
\end{equation}
Our conclusion follows immediately from \eqref{Equ94}, \eqref{Equ95}.
\end{proof}

\appendix
\section{Proofs of Propositions \ref{prime-integrals}, \ref{Actions}}
\label{A}
\begin{proof} (of Proposition \ref{prime-integrals})\\
1. 
We show that $y \to (\psi _0 (y), ...,\psi _{m-1} (y))$ is a change of coordinates. Indeed, if $y, \overline{y} \in \R^m$ verify $\psi _i (y) = \psi _i (\overline{y})$, $i \in \{0,1,...,m-1\}$, then $y, \overline{y}$ belong to the same characteristic. Thus denoting  by $y_0$ the discontinuity point of $\psi _0$ on this characteristic,  there are $h, \overline{h} \in [0,T_c(y_0))$, with  $h \leq \overline{h}$ without loss of generality, such that $y = Y(h;y_0), \overline{y} = Y(\overline{h};y_0)$. Integrating $(b^0 \dny \psi _0) (Y(s;y_0)) = I(Y(s;y_0)) = I(y_0)$ between $h$ and $\overline{h}$ we obtain 
\[
0 = \psi _0 (\overline{y}) - \psi _0 (y) = \psi _0 (Y(\overline{h};y_0)) - \psi _0(Y(h;y_0)) = (\overline{h} - h ) I(y_0).
\]
Therefore $h = \overline{h}$ which implies $y = \overline{y}$. We have shown that $ y \in \R^m$ is uniquely determined by $\psi _0 (y),...,\psi _{m-1} (y)$. Indeed, $y$ belongs to the characteristic associated to the invariants $\psi _1 (y), ...,\psi _{m-1} (y)$ and, if we denote by $y_0$ the discontinuity point of $\psi _0$ on this characteristic, we have $y = Y(s;y_0)$, where the parameter $s \in [0,T_c(y))$ is determined by 
\[
\psi _0 (y) - \psi _0 (y_0) = \int _0 ^s (b^0 \dny \psi _0) (Y(\tau;y_0))\;\mathrm{d}\tau = s I(y_0).
\]
Finally, without loss of generality we suppose that $\psi _0 (y_0) = 0$ and thus $\psi _0 (y) \in [0,T_c (y_0) I(y_0)) = [0,[\psi _0]\;) = [0,S)$. Clearly the map $y \to (\psi _0(y), ..., \psi _{m-1} (y))$ is a surjection between $\R^m$ and $[0,S) \times D$, which shows 1.\\
2. Notice that $\nabla _y \psi _0 \notin \mathrm{span} \{\nabla _y \psi _1,...,\nabla _y \psi _{m-1}\}$ since $b^0 \dny \psi _0 \neq 0$ and $b^0 \dny \psi _1 = ... = b ^0 \dny \psi _{m-1} = 0$. Thus, for any $i \in \{1, ..., m-1\}$ there is a unique vector field $b ^i$ such that 
\[
\bin \psi_j = \delta ^i _j,\;\;\qquad j \in \{0,1,...,m-1\},
\]
which proves 2. 
\end{proof}

\begin{proof} (of Proposition \ref{Actions})\\
Notice that for any $y \in \R^m$ the function 
$$
s \to u(Y(s;y)) = w(\psi _0 (Y(s;y)), \psi _1 (y)...,\psi _{m-1} (y))
$$
is continuous on $\R$. In particular this holds true for any discontinuity point $y_0$ of $\psi _0$. 
For any $y = Y(s;y_0)$, $s \in (0,T_c (y_0))$ we can write
\begin{eqnarray}
(\bzn u )(y) & = & \frac{\mathrm{d}}{\mathrm{d}s} \{ u(Y(s;y_0))\} = 
\bzn \psi _0 (y) \;\partial _{\psi _0} w(\psi _0(y),\psi _1 (y_0)...,\psi _{m-1} (y_0)) \nonumber \\
&=& I(y) \partial _{\psi _0} w(\psi _0(y),...,\psi _{m-1} (y)).\nonumber 
\end{eqnarray}
It remains to analyze the differentiability around the point $y_0$. Without loss of generality we assume that $I>0$. Taking $s >0$ one gets
\begin{eqnarray}
\label{EquR} \bzn u (y_0) & = & \lim _{s \searrow 0} \frac{1}{s} [ w (\psi _0 (Y(s;y_0)), \psi _1 (y_0),...,\psi _{m-1} (y_0)) - w (\psi _0 (y_0),...,\psi _{m-1} (y_0))] \nonumber \\
& = & \lim _{s \searrow 0} \frac{1}{s} [ w (\psi _0 (y_0) + sI, \psi _1 (y_0),...,\psi _{m-1} (y_0)) - w (\psi _0 (y_0),...,\psi _{m-1} (y_0))] \nonumber \\
& = & I(y_0) \partial _{\psi _0} w _+ \;(\psi _0 (y_0),...,\psi _{m-1} (y_0))
\end{eqnarray}
where $\partial _{\psi _0} w_+$ stands for the right derivative of $w$ with respect to $\psi _0$. Taking now $s<0$, using  the $S$-periodicity of $w$ with respect to $\psi _0$, we obtain
\begin{eqnarray}
\label{EquL} \bzn u (y_0) & = & \lim _{s \nearrow 0} \frac{1}{s} [ w (\psi _0 (Y(T_c + s;y_0)),...,\psi _{m-1} (y_0)) - w (\psi _0 (y_0),...,\psi _{m-1} (y_0))] \nonumber \\
& = & \lim _{s \nearrow 0} \frac{1}{s} [ w (\psi _0 (y_0) + (T_c +s)I,...,\psi _{m-1} (y_0)) - w (\psi _0 (y_0),...,\psi _{m-1} (y_0))] \nonumber \\
& = & \lim _{s \nearrow 0} \frac{1}{s} [ w (\psi _0 (y_0) + sI,...,\psi _{m-1} (y_0)) - w (\psi _0 (y_0),...,\psi _{m-1} (y_0))] \nonumber \\
& = & I(y_0) \partial _{\psi _0} w _- \;(\psi _0 (y_0),...,\psi _{m-1} (y_0))
\end{eqnarray}
where $\partial _{\psi _0} w_-$ stands for the left derivative of $w$ with respect to $\psi _0$. Combining \eqref{EquR}, \eqref{EquL} we deduce that $w$ is differentiable with respect to $\psi _0$ and at any point $y \in \R^m$ 
\[
\bzn u = I(y) \partial _{\psi _0} w (\psi _0 (y),...,\psi _{m-1} (y)).
\]
Moreover, for any $i \in \{1,...,m-1\}$ we have
\[
\bin u = \sumjz \partial _{\psi _j} w \;\bin \psi _j = \partial _{\psi _i} w (\psi _0 (y),...,\psi _{m-1} (y)),\;\;y \in \R^m.
\]
\end{proof}

\vspace{1cm}

\noindent {\bf Acknowledgements.} This work was initiated during the visit of the first author at the University of Texas at Austin. The second author acknowledges partial support from NSF grant DMS 1109625. Support from the Institute for Computational Engineering and Sciences at the University of Texas at Austin is also gratefully acknowledged. 

%\footnotesize{
%\scriptsize{

%}

\begin{thebibliography}{999}



\bibitem{BenGamKla04} N. Ben Abdallah, I.M. Gamba, A. Klar, The Milne problem for high field kinetic equations, SIAM J. Appl. Math. 64(2004) 1709-1736.


\bibitem{BogMit61} N.N. Bogoliubov, Y.A. Mitropolsky, {Asymptotic methods in the theory of nonlinear oscillations}, Gordon and Breach Sciences Publishers, New York, 1961.


\bibitem{BosAsyAna} M. Bostan, {The Vlasov-Poisson system with strong external magnetic field. Finite Larmor radius regime}, Asymptot. Anal. 61(2009) 91-123.


\bibitem{BosTraSin} {M. Bostan}, {Transport equations with disparate advection fields. Application to the gyrokinetic models in plasma physics}, J. Differential Equations 249(2010) 1620-1663.


\bibitem{BosGuidCent3D} {M. Bostan}, {Gyrokinetic Vlasov equation in three dimensional setting. Second order approximation}, SIAM J. Multiscale Model. Simul. 8(2010) 1923-1957.


\bibitem{Bos12} {M. Bostan}, {Transport of charged particles under fast oscillating magnetic fields}, to appear in SIAM J. Math. Anal.


\bibitem{BosGamGou10} {M. Bostan, I.M. Gamba, T. Goudon}, {The linear Boltzmann equation with space periodic electric field}, Nonlinear partial differential equations and related topics, Amer. Math. Soc. Transl. Ser. 2 229(2010) 51-66.


\bibitem{BolVlaPoi1D} {M. Bostan, I.M. Gamba, T. Goudon, A. Vasseur},
{Boundary value problems for the stationary Vlasov-Boltzmann-Poisson equation}, Indiana Univ. Math. J. 59(2010) 1629-1660. 


\bibitem{BosNeg09} {M. Bostan, C. Negulescu}, {Mathematical models for strongly magnetized plasmas with mass disparate particles}, Discrete Contin. Dyn. Syst. Ser. B 15(2011) 513-544.


\bibitem{Bri04} A.J. Brizard, A guiding-center Fokker-Planck collision operator for nonuniform magnetic fields, Phys. Plasmas 11(2004) 4429-4438.


\bibitem{BriHam07} A.J. Brizard, T.S. Hahm, Foundations of nonlinear gyrokinetic theory, Rev. Modern Phys. 79(2007) 421-468.


\bibitem{Cer88} C. Cercignani, {The Boltzmann equation and its applications}, Springer-Verlag, New York 1988.


\bibitem{CerGamLev97} C. Cercignani, I.M. Gamba, C.D. Levermore, High field approximations to a Boltzmann-Poisson system boundary conditions in a semiconductor, Appl. Math. Lett. 10(1997) 111-118.


\bibitem{CerGamLev01} C. Cercignani, I.M. Gamba, C.D. Levermore, A drift-collision balance asymptotic for a Boltzmann-Poisson system in bounded domains, SIAM J. Appl. Math. 61(2001) 1932-1958.


\bibitem{CerIllPul94} C. Cercignani, R. Illner, M. Pulvirenti, {The mathematical theory of dilute gases}, Springer-Verlag, Berlin 1994.


\bibitem{FreSon98} {E. Fr\'enod, E. Sonnendr\"ucker},
{Homogenization of the Vlasov equation and of the Vlasov-Poisson
system with strong external magnetic field}, Asymptotic Anal.
18(1998) 193-213.


\bibitem{GarDifSarGra09} X. Garbet, G. Dif-Pradalier, C. Nguyen, Y. Sarazin, V. Grandgirard, Ph. Ghendrih, Neoclassical equilibrium in gyrokinetic simulations, Phys, Plasmas 16(2009).


\bibitem{Gar97} X. Garbet, Towards a full self-consistent numerical simulation of tokamak plasma turbulence, Plasma Phys. Cont. Fusion 39(1997). 


\bibitem{Gar98} X. Garbet, Turbulence modeling in fusion plasmas, Europhysics News 29(1998).


\bibitem{GolSai99} {F. Golse, L. Saint-Raymond}, {The Vlasov-Poisson
system with strong magnetic field}, J. Math. Pures Appl. 78(1999)
791-817.


\bibitem{GraBruBer06} V. Grandgirard, M. Brunetti, P. Bertrand, N.
Besse, X. Garbet, P. Ghendrih, G. Manfredi, Y. Sarazin, O. Sauter,
E. Sonnendr\"ucker, J. Vaclavik, L. Villard, A drift-kinetic
semi-Lagrangian 4D code for ion turbulence simulation, J. Comput.
Phys. 217(2006) 395-423.


\bibitem{HazMei03} R.D. Hazeltine, J.D. Meiss, Plasma confinement, Dover Publications, Inc. Mineola, New York, 2003.


\bibitem{Little79} {R. G. Littlejohn}, {A guiding center Hamiltonian : A new approach}, J. Math. Phys. 20(1979) 2445-2458.


\bibitem{Little81} {R. G. Littlejohn}, {Hamiltonian formulation of guiding center motion}, Phys. Fluids 24(1981) 1730-1749.


\bibitem{MarRinSch90} P.A. Markowich, C. Ringhofer, C. Schmeiser, {Semiconductor equations}, Springer-Verlag, New York 1990.


\bibitem{Pou92} F. Poupaud, Runaway phenomena and fluid approximation under high fields in semiconductor kinetic theory, Z. Angew. Math. Mech. 72(1992) 359-372.


\bibitem{Rax} J.-M. Rax, Physique des plasmas, Cours et applications, Dunod 2007.


\bibitem{Rud76} W. Rudin, Principles of Mathematical Analysis, McGraw-Hill 1976.


\bibitem{XuRos91} X.Q. Xu, M.N. Rosenbluth, Numerical simulation of ion-temperature-gradient-driven modes, Phys. Fluids, B 3(1991) 627-643.



\end{thebibliography}
\end{document}